\definecolor {processblue}{cmyk}{0.96,0,0,0}
\newcommand\cyr{%
\renewcommand\rmdefault{wncyr}%
\renewcommand\sfdefault{wncyss}%
\renewcommand\encodingdefault{OT2}%
\normalfont
\selectfont}
\DeclareTextFontCommand{\textcyr}{\cyr}
\DeclareFontFamily{OT1}{rsfs}{}
\DeclareFontShape{OT1}{rsfs}{n}{it}{<-> rsfs10}{}
\DeclareMathAlphabet{\mathscr}{OT1}{rsfs}{n}{it}
\numberwithin{equation}{section}
\newtheorem{theorem}{Theorem}[section]
\newtheorem{lemma}[theorem]{Lemma}
\newtheorem{prop}[theorem]{Proposition}
\newtheorem{cor}[theorem]{Corollary}
\theoremstyle{definition}
\newtheorem{defn}[theorem]{Definition}
\newtheorem{remark}[theorem]{Remark}
\theoremstyle{remark}
\newtheorem{example}[theorem]{Example}
\newcommand{\degree}{\operatorname{degree}}
\newcommand{\Spec}{\operatorname{Spec}}
\newcommand{\diam}{\operatorname{diam}}
\newcommand{\dist}{\operatorname{dist}}
\newcommand{\height}{\operatorname{ht}}
\newcommand{\Ext}{\operatorname{Ext}}
\newcommand{\Supp}{\operatorname{Supp}}
\newcommand{\codim}{\operatorname{codim}}
\newcommand{\depth}{\operatorname{depth}}
\begin{document}
\title[On the diameter of dual graphs of $(S_2)$ Stanley-Reisner rings]{On the diameter of dual graphs of Stanley-Reisner rings with Serre $(S_2)$ property and Hirsch type bounds on abstractions of polytopes}

\author{Brent Holmes}
\address{Department of Mathematics\\
University of Kansas\\
 Lawrence, KS 66045-7523 USA}
\email{brentholmes@ku.edu}
\date{\today}

\thanks{2010 {\em Mathematics Subject Classification\/}: 05C12, 05E45, 13D02}

\keywords{Simplicial complex, dual graph, Stanley-Reisner ring, Serre condition, Hirsch Conjecture, polyhedra.}

\begin{abstract} Let $R$ be an equidimensional commutative Noetherian ring of positive dimension.  The dual graph of $R$ is defined as follows: the vertices are the minimal prime ideals of $R$, and the edges are the pairs of prime ideals $(P_1,P_2)$ with height $(P_1 + P_2) = 1$.  If $R$ satisfies Serre's property $(S_2)$, then $\mathcal{G} (R)$ is connected.  In this note, we provide lower and upper bounds for the maximum diameter of dual graphs of Stanley-Reisner rings satisfying $(S_2)$.  These bounds depend on the number of variables and the dimension.  Dual graphs of $(S_2)$ Stanley-Reisner rings are a natural abstraction of the $1$-skeletons of polyhedra.  We discuss how our bounds imply new Hirsch-type bounds on $1$-skeletons of polyhedra.
\end{abstract}

\maketitle

\section{Introduction}

The polynomial Hirsch conjecture states that a $d$-dimensional polyhedron with $n$ facets has diameter bounded above by a polynomial expression in $n-d$.  The diameter of a polyhedron is the diameter of its 1-skeleton.  The polynomial Hirsch conjecture is a weakening of the Hirsch conjecture, which was disproved by Klee and Walkup \cite{KW67} in the general case and Santos \cite{Sa11} in the bounded case.  For a history of the Hirsch and polynomial Hirsch conjectures, see \cite{Sa13}.  In this paper we construct bounds which improve on bounds from existing literature \cite{La70, Ba74, EH10}, but are not polynomial.  Our bounds are sharp for small $n$ and $d$.  Many authors have examined  the diameters of generalizations of polyhedra (e.g.~\cite{AD74, CS16, EH10, Ka92}).  We consider generalizations of polyhedra whose 1-skeletons $\mathcal{G}$ have vertices that are subsets of size $d$ of $\{ 1,2,...,n \}$, such that $\mathcal{G}$ has the following properties (see Section 1 of~\cite{EH10}):  

\begin{enumerate}

\item (i) For each $u, v \in V(G)$ there exists a path connecting $u$ and $v$ whose intermediate vertices all contain 
$u  \cap  v$.

\item (ii) The edge $(u, v)$ is present if and only if $|u \cap v| = d - 1$.

\end{enumerate}

Generalized polyhedra of this type have been considered in section $4.1$ of~\cite{Ka92}.  

Dual graphs are an object of wide interest in commutative algebra and algebraic geometry (e.g. \cite{Ha62, BB15, BV15,  BM17, NS17}).  From our setting, we shall consider the dual graph to have vertices corresponding to the minimal primes of a ring, however, the dual graph can be constructed in a more general setting with vertices corresponding to the irreducible components of a scheme.  It is a famous result of Hartshorne \cite{Ha62} that if $X$ is a connected projective scheme such that $\mathcal{O}_{X,x}$ satisfies Serre’s condition $(S_2)$ for all $x \in X$, then the dual graph of $X$ is connected.  This result is commonly known in its less general form to say arithmetically Cohen–Macaulay projective schemes have connected dual graphs.

Stanley-Reisner rings satisfying $(S_2)$ have recently attracted much attention \cite{ MT09, HT11, PF14, DH16}.  It is known that a graph having properties (i) and (ii) is equivalent to that graph being the dual graph of a Stanley-Reisner ring satisfying Serre's condition $(S_2)$.  We shall combine techniques from commutative algebra and combinatorics to prove bounds on the diameter of these graphs.

We define $\mu (d,n)$ to be the largest diameter of a dual graph of an $(S_2)$ Stanley-Reisner ring of dimension $d$ and codimension $n-d$.  One of the main results of this paper is the determination of the precise values of $\mu (d,n)$ for small $n$ and $d$ (see Table 1).  

To that end we first construct upper bounds for quite general $n$ and $d$.  For instance, Theorem \ref{dimthree} shows $\mu (3,n) \leq \max (2n-10,n-2)$.  In Theorem \ref{upperboundallnd}, we prove $\mu (d,n) \leq 2^{d-2}(n-d)$, which improves on the bound of~\cite{EH10} (See Remark \ref{upperboundremark}).  In Theorem \ref{bigupperbound}, we prove $\mu (d,n) \leq 3 \cdot 2^{\frac{n-d-5}{2}}(n-d)$.  This result is derived using our bound from Theorem \ref{upperboundallnd}.  Combining these results with manually generated constructions (Appendix \ref{AppendixA}), we can produce the following table of exact values of $\mu(d,n)$:

\begin{table}[h!]
\caption{$\mu(d,n)$ for small $n$ and $d$}
\label{table:1}
\centering
\begin{tabular}{ |p{1.5cm}|p{1.5cm}|p{1.5cm}|p{1.5cm}|  }
\hline
 & d=2 & d=3 & d=4 \\
\hline
n=6 & 4 & 4 & 2 \\
\hline
n=7 & 5 & 5 & 3  \\
\hline
n=8 & 6 & 6 & 6 \\
\hline
n=9 & 7 & 7 & 7 \\
\hline
n=10+ & $n-2$ & $\geq n-1$ & $\geq n-2$ \\
\hline
\end{tabular}
\end{table}

In Section 6 of~\cite{Sa11}, Santos builds arbitrarily large complexes whose diameters exceed the Hirsch bound by a fixed fraction.  This is achieved by  using a \textit{gluing lemma} from~\cite{HK98}, which states that two $d$-dimensional polytopes, $P_1$, $P_2$, can be glued together yielding a new polytope $P$ with $\diam P \geq \diam P_1 + \diam P_2 -1$.  We present Theorem \ref{gluing}, an algebraic analogue to the gluing lemma.  This theorem tells us that two $d-1$-dimensional complexes with $(S_\ell)$ rings (we shall call these $(S_\ell)$ complexes) glued together along a pure, $(S_{\ell - 1})$ subcomplex of dimension at least $d-2$ yield an $(S_\ell)$ complex.  The proof of Theorem \ref{gluing} is achieved using local cohomology and local duality.  

Applying Theorem \ref{gluing}, we are able to construct complexes whose Stanley-Reisner rings have dual graphs with arbitrarily large diameter which (with proper labeling) have properties (i) and (ii).  For appropriate complexes $\Delta$ and $\Delta '$, \[ \diam G(k[\Delta ]) + \diam G(k[\Delta ']) =\diam G(k[\Delta \cup \Delta ']). \]  Gluing multiple copies of examples from the small $n$ and $d$ cases together, we construct graphs with properties (i) and (ii) in dimensions $3$ and $4$ with diameters $\frac{5}{4}(n-d)$ and $\frac{3}{2}(n-d)$ respectively (see Theorem \ref{gluingd3}, Theorem \ref{gluingd4}).  We show graphs with properties (i) and (ii) and diameter $\frac{3}{2}(n-d)$ can be constructed for all $d \geq 4$ (see Remark \ref{gluingremark}).  

Introduction of terms is covered in Section \ref{intro}.  In Section \ref{s2}, we demonstrate that a graph having properties (i) and (ii) is equivalent to that graph being the dual graph of a Stanley-Reisner ring satisfying Serre's condition $(S_2)$.  In Section \ref{upper}, we prove the upper bounds introduced earlier in this  section.  Details of the process of gluing to preserve $(S_2)$ are discussed in Section \ref{glue}, and constructions of glued complexes are discussed in Section \ref{glueconstruct} with examples displayed in Figures \ref{4dfigure} and \ref{3dfigure}.  In Appendix \ref{AppendixA}, we show the constructions needed which justify Table \ref{table:1} and investigate the relationship between $(S_2)$ and Buchsbaum.

\section{Background and Notation}

\subsection{Introduction of Terms}\label{intro}

\begin{defn}
A d-dimensional \textit{polyhedron} is a non-empty intersection of finitely many closed half spaces of $\mathbb{R}^d$.
\end{defn}

\begin{defn}
A \textit{facet} of a simplicial complex $\Delta$ is a simplex of $\Delta$ which is not properly contained in another simplex of $\Delta$.
\end{defn}

\begin{defn}
The \textit{$1$-skeleton of a polyhedron} is the set of vertices and edges of the polyhedron.
\end{defn}

A polyhedron whose vertices are the intersection of $d$ of its facets is non-degenerate.  Any polyhedron can be transformed into a non-degenerate polyhedron by perturbation without decreasing its diameter~\cite{EH10}.  Therefore, we may restrict our attention to non-degenerate polyhedra.  

\begin{defn}
A \textit{pure} simplicial complex is a simplicial complex whose facets all have the same dimension.
\end{defn}

We remind the reader that a $(d-1)$-dimensional simplicial complex has a $d$-dimensional Stanley-Reisner ring.  We will use as notation $\Delta_R$ to be the simplicial complex with Stanley-Reisner ring $R$.  We shall use $\Delta$ when the ring is either unspecified or clear from context.

Let $k$ be a field and $S = k[x_1,...,x_n]$.  Let $\Delta$ be a pure, $(d-1)$-dimensional simplicial complex with Stanley-Reisner ring $R = S/I$, where $I$ is the intersection of the minimal prime ideals $P_i$ of $R$.  Let $\{ F_i \}$ be the facets of $\Delta$.  With proper numbering, $P_i$ is generated by $\{ x_j | x_j \notin F_i \}$ (see e.g. the survey ~\cite{FM14} for proof).  Thus purity of $\Delta$ is equivalent to each $P_i$ being generated by $n - d$ distinct variables.  

\begin{defn}
Let $\mathcal{G} (R)$ be the graph with $V({\mathcal{G}} (R)) = \{v_i = \Pi x_j \}$ where the $x_j$'s generate $P_i$, $E({\mathcal{G}} (R)) = \{ (v_j, v_k) | \height (P_j + P_k) = 1 \}$.  Then $\mathcal{G} (R)$ is the \textit{dual graph} of $R$.
\end{defn}

This type of graph is often constructed in a more general setting applying to schemes (e.g.~\cite{BB15}).  This definition follows the definition of Hochster and Huneke \cite{HH94} and is equivalent to other definitions, (e.g. \cite{BB15}).  In this paper we consider dual graphs of Stanley-Reisner rings.  It should be noted that not every graph is a dual graph of a Stanley-Reisner ring \cite{BV15}).

\begin{defn}
Let $\Delta$ be a simplicial complex on $\{ 1,2, \cdots , n \}$.  The \textit{Alexander dual} of $\Delta$ is \[ \Delta ^\vee = \{ F \subseteq \{ 1,2, \cdots , n \} | \{ 1,2, \cdots , n \} \backslash F \notin \Delta \}. \]
\end{defn}

Let $S/I$ be the Stanley-Reisner ring of $\Delta$, $S/I^\vee$ be the Stanley-Reisner ring of $\Delta ^\vee$.  We refer to $I^\vee$ as the Alexander dual of $I$.  The Alexander dual of $I$ is generated by the product of the generators of each minimal prime ideal of $I$ (see e.g. the survey ~\cite{FM14} for proof).

\begin{example}

Let $S = k[x_1,x_2,x_3,x_4,x_5,x_6]$ and \[ I= \langle x_1x_3x_5, x_1x_3x_6,x_1x_4x_5, x_1x_4x_6,x_2x_3x_5, x_2x_3x_6,x_2x_4x_5, x_2x_4x_6 \rangle = \langle x_1, x_2 \rangle \cap \langle x_3, x_4 \rangle \cap \langle x_5,x_6 \rangle. \]
Then $I^\vee = \langle x_1x_2, x_3x_4,x_5x_6 \rangle$.

\end{example}

\begin{remark}
We notice that the vertices of $G(S/I)$ are in one to one correspondence with the generators of $I^\vee$.  Also, each vertex is comprised of $n-d$ variables.  Finally, notice that by definition every dual graph has property (ii).
\end{remark}

\begin{defn}
Given a graph $\mathcal{G} (R)$, we define $\bar{\mathcal{G}} (R)$, to be the graph with $V(\bar {\mathcal{G}} (R)) = \{ \bar v_i = \frac{ x_1 \cdot x_2 \cdot ... \cdot x_n}{v_i}\}$ where $\{ v_i \} = V({\mathcal{G}} (R))$ and $E(\bar {\mathcal{G}} (R)) = \{ \bar v_i, \bar v_j \}$ where $E({\mathcal{G}} (R)) = \{ v_i,v_j \}$. 
\end{defn}

The graph $\bar{\mathcal{G}} (R)$ is a relabeling of $\mathcal{G} (R)$.  We construct this labeling so that our graphs fit the setting of \cite{EH10}, and so that we can determine if our graphs have properties (i) and (ii).

\begin{remark}
We note $\bar{\mathcal{G}} (R)$ is the facet-ridge graph of the complex with Stanley-Reisner ring $R$.
\end{remark}

\begin{defn}
A graph $\bar{\mathcal{G}} (R)$ is \textit{locally connected} if for any two vertices $\bar v_i, \bar v_j \in \bar {\mathcal{G}} (R)$, there exists a path between them such that each vertex in the path contains $\bar v_i \cap \bar v_j$.
\end{defn}

Locally connected is also referred to as ultra connected in~\cite{Ka92}.

\begin{remark}
Locally connected graphs are commonly referred to as \textit{normal} graphs in combinatorial circles.  Normality of a graph is a notion unrelated to normality of a ring.  To avoid this confusion, we use the name ``locally connected," which has been motivated by Theorem \ref{locally}, which connects this property of $\bar{\mathcal{G}} (R)$ to localization of the ring $R$.
\end{remark}


\subsection{The $(S_{\ell})$ condition}\label{s2}

We have shown graphs with properties (i) and (ii) from \cite{EH10} are an abstraction of $1$-skeletons of non-degenerate polyhedra.  Ensuring a graph satisfies property (ii) is not difficult.  Using Serre's condition and syzygy matrices, we demonstrate a simple method to ensure a dual graph of an $(S_2)$ Stanley-Reisner ring satisfies property (i).  


\begin{defn}
A ring satisfies \textit{Serre's condition $(S_{\ell})$} if for all $P$ in $\Spec R$, \[ \depth R_P \geq \min \{ {\ell}, \dim R_P \}. \]
\end{defn}

\begin{defn}
Let $M$ be an $R$ module with minimal generating set $\{ z_1,z_2,...,z_k \}$.  A \textit{first syzygy} of $M$ is a non-zero vector $(a_1,...,a_k) \in R^k$ such that $a_1z_1+...+a_kz_k = 0$.  A \textit{first syzygy matrix} of a module is a matrix whose columns span all the first syzygies of that module.
\end{defn}

\begin{theorem}[Yanagawa]\label{Yana}
The Stanley-Reisner ring $R=S/I$ satisfies $(S_2)$ if and only if $R$ is equidimensional and $I^\vee$ has a first syzygy matrix with only linear entries. 
\end{theorem}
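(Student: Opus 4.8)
The plan is to derive this from the combinatorial description of the $(S_2)$ property due to Reisner and its refinement by Schenzel, together with Hochster's formula relating local cohomology of $k[\Delta]$ to reduced homology of links, and then to translate the homological vanishing into a statement about the linearity of the first syzygies of $I^\vee$ via the Eagon--Reiner / Terai correspondence between the projective resolution of $I^\vee$ and the local cohomology of $k[\Delta]$. More precisely, Terai's theorem states that $\operatorname{reg}(I^\vee) = \operatorname{pd}(k[\Delta])$, and more refined versions give $\beta_{i,i+j}(I^\vee) = \dim_k \widetilde H_{j-1}(\mathrm{link}\,\text{-type data})$; the point is that $I^\vee$ having a \emph{linear} first syzygy matrix is equivalent to $\beta_{1,j}(I^\vee) = 0$ for all $j \neq d+1$ (since $I^\vee$ is generated in degree $d$, linear first syzygies live in degree $d+1$), and this in turn corresponds under Terai's formula to the vanishing of certain local cohomology modules of $R$ in the appropriate degrees.

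First I would recall the precise combinatorial translations: (a) Reisner's criterion, that $k[\Delta]$ is Cohen--Macaulay iff $\widetilde H_i(\mathrm{link}_\Delta(\sigma);k) = 0$ for all faces $\sigma$ and all $i < \dim \mathrm{link}_\Delta(\sigma)$; (b) Schenzel's extension, that $k[\Delta]$ satisfies $(S_2)$ (for equidimensional $\Delta$) iff $\widetilde H_i(\mathrm{link}_\Delta(\sigma);k) = 0$ for all $\sigma$ and all $i$ with $i < \dim\mathrm{link}_\Delta(\sigma)$ and $i \leq 1$ — that is, we only need the vanishing in homological degrees $0$ and $1$; (c) Terai's formula $\beta_{i,j}(I^\vee) = \sum_{\sigma} \dim_k \widetilde H_{j-i-1}(\mathrm{link}_\Delta(\sigma);k)$ where the sum is over faces $\sigma$ with $|\sigma| = n-j$ (I would state this carefully with the correct index shifts, since that is where sign errors creep in). Then I would note that since $\Delta$ is pure of dimension $d-1$, the ideal $I^\vee$ is generated in degree $d$ (it is minimally generated by the squarefree monomials $\prod_{j \notin F_i} x_j$ of degree $n-d$ — wait, one must be careful about which convention: $I^\vee$'s generators correspond to minimal primes, which have $n-d$ generators each, so $I^\vee$ is generated in degree $n-d$; I would fix the degree bookkeeping at this stage).

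The key step is then to show the equivalence: $I^\vee$ has a first syzygy matrix with only linear entries $\iff$ $\beta_{1,j}(S/I^\vee) = 0$ for all $j$ with $j > (n-d)+1$, i.e. all higher (non-linear) first syzygies vanish $\iff$ (via Terai) $\widetilde H_{j - 2}(\mathrm{link}_\Delta(\sigma);k) = 0$ for all relevant $\sigma$ and all $j > n-d+1$, which after the index translation says exactly that the relevant homology in degrees $0$ and $1$ vanishes as demanded by Schenzel's $(S_2)$ criterion. The forward direction (linear syzygies $\Rightarrow$ $(S_2)$) and the backward direction are handled symmetrically once the dictionary is set up; equidimensionality must be carried as a hypothesis on both sides, which the statement does explicitly. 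I expect the main obstacle to be purely bookkeeping: getting all the degree shifts, the homological-versus-topological indexing, and the passage between "$(S_2)$" as a depth condition on all localizations and "$(S_2)$" as the degree-$0,1$ homology vanishing (Schenzel) exactly right, so that the linearity threshold on syzygies matches the homological degree threshold in Schenzel's criterion. A clean way to organize this is to first prove the purely homological-algebra lemma that for a module generated in a single degree $e$, having linear first syzygies is equivalent to $\operatorname{reg}$-type vanishing $\beta_{1,j} = 0$ for $j \ne e+1$, then invoke Terai's duality and Schenzel's theorem as black boxes to finish.
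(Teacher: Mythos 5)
The paper offers no proof of this statement; it is cited directly as Corollary~3.7 of Yanagawa \cite{Ya00}, where it is established via the machinery of squarefree $S$-modules and the squarefree module structure on $\Ext_S^{n-i}(k[\Delta],\omega_S)$. Your proposed route through Reisner--Schenzel and Terai/Eagon--Reiner duality is a legitimate and fairly standard alternative: it replaces Yanagawa's module-theoretic packaging by Hochster's formula plus combinatorial Alexander duality. Both routes rest ultimately on local duality, so neither is more elementary, but yours is more combinatorial in flavor and your overall skeleton is sound.

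That said, the two index conventions you flagged as uncertain are in fact wrong, and in a way that is worth understanding precisely because it is what makes the argument close. In the Terai/Eagon--Reiner formula the homological degree of the reduced homology does \emph{not} depend on the internal degree $j$; the correct statement is
\[
\beta_{i,j}(I_{\Delta^\vee}) \;=\; \sum_{\substack{\sigma\in\Delta\\ |\sigma|=n-j}} \dim_k \widetilde H_{\,i-1}\bigl(\operatorname{lk}_\Delta\sigma;k\bigr),
\]
not $\widetilde H_{\,j-i-1}$. Each homological step $i$ of the resolution of $I^\vee$ thus picks out a single fixed homology degree $i-1$ across all links, which is exactly the mechanism driving the dictionary. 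In particular $\beta_{0,j}$ sees $\widetilde H_{-1}$, so the condition $\beta_{0,j}(I^\vee)=0$ for $j\neq n-d$ encodes purity, i.e.\ equidimensionality; and $\beta_{1,j}$ sees $\widetilde H_{0}$, so ``linear first syzygy matrix'' ($\beta_{1,j}(I^\vee)=0$ for $j>n-d+1$) encodes connectedness of $\operatorname{lk}_\Delta\sigma$ for every face $\sigma$ with $\dim\operatorname{lk}_\Delta\sigma\geq 1$. Correspondingly, the Schenzel-type criterion for $(S_2)$ is $\widetilde H_i(\operatorname{lk}_\Delta\sigma;k)=0$ for $i<\min\{1,\dim\operatorname{lk}_\Delta\sigma\}$, i.e.\ vanishing in degrees $-1$ and $0$ (nonemptiness and connectedness of links), \emph{not} degrees $0$ and $1$; vanishing through degree $1$ is part of $(S_3)$. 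With these two indices corrected, the identification you outline does match up term by term and both directions of the equivalence fall out symmetrically as you describe.
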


This theorem is true by Corollary $3.7$ in~\cite{Ya00}.  Since checking the linearity of a first syzygy matrix can be easily done computationally, this theorem provides us with a simple way to demonstrate if a Stanley-Reisner ring satisfies $(S_2)$.  Now we connect $(S_2)$ to our locally connected condition.  We will use local duality, the ext module, and the cohomology module.  For background on these topics, see a Homological Algebra Text (e.g. \cite{We94}). 

\begin{theorem}\label{locally}
Let $R$ be an equidimensional Stanley-Reisner ring.  The following are equivalent:
\begin{enumerate}

\item $R$ satisfies $(S_2)$.

\item For any prime ideal $P$ generated by variables, ${\mathcal{G}} (R_P)$ is connected.

\item $\bar{\mathcal{G}} (R)$ is locally connected.

\end{enumerate}
\end{theorem}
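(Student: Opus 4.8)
The plan is to pass to the combinatorics of $\Delta=\Delta_R$: write $P_\sigma:=(x_j:j\notin\sigma)$ for the prime of $R$ attached to a face $\sigma\in\Delta$ (these are exactly the primes generated by variables), note $R/P_\sigma\cong k[x_j:j\in\sigma]$ so that $\dim R_{P_\sigma}=d-|\sigma|$, and observe via a routine height computation — using that $R$ is reduced, equidimensional and catenary — that for facets $F_i,F_j\supseteq\sigma$ one has $\height\bigl((P_i+P_j)R_{P_\sigma}\bigr)=d-|F_i\cap F_j|$, the same value computed in $R$. Hence, under the identification of $\bar v_i$ with the facet $F_i$ (so that $\bar v_i\cap\bar v_j$ corresponds to $F_i\cap F_j$), the graph $\mathcal G(R_{P_\sigma})$ is the dual graph of $\operatorname{link}_\Delta\sigma$, equivalently the subgraph of $\bar{\mathcal G}(R)$ induced on the facets containing $\sigma$; in particular it is complete, hence connected, whenever $d-|\sigma|\le 1$. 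With this dictionary the two equivalences are attacked separately.

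For $(2)\Leftrightarrow(3)$: assuming $(3)$, given a face $\sigma$ and facets $F,F'\supseteq\sigma$, we have $F\cap F'\supseteq\sigma$, so the walk from $F$ to $F'$ through facets containing $F\cap F'$ furnished by local connectedness lies entirely in $\operatorname{link}_\Delta\sigma$; thus $\mathcal G(R_{P_\sigma})$ is connected, which is $(2)$. Conversely, assuming $(2)$, for facets $F_i,F_j$ put $\sigma:=F_i\cap F_j$, a face of $\Delta$; connectedness of $\mathcal G(R_{P_\sigma})$ yields a walk from $F_i$ to $F_j$ through facets containing $\bar v_i\cap\bar v_j$, which is precisely what $(3)$ requires.

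For $(1)\Leftrightarrow(2)$: here the Serre condition $(S_2)$ may be tested at the $\BZ^n$-graded primes $P_\sigma$; combining $\operatorname{depth} R_{P_\sigma}=\operatorname{depth} k[\operatorname{link}_\Delta\sigma]$ with Hochster's formula for local cohomology (this is where local duality, the Ext module and the local cohomology module are used; alternatively one may route through Theorem \ref{Yana} and Alexander duality), and using that $R$ is reduced, one gets: $(S_2)$ holds automatically at $P_\sigma$ when $d-|\sigma|\le 1$, while for $d-|\sigma|\ge 2$ it amounts to $\operatorname{depth} R_{P_\sigma}\ge 2$, i.e. $H^1_{\mathfrak m}\bigl(k[\operatorname{link}_\Delta\sigma]\bigr)=0$, i.e. $\operatorname{link}_\Delta\sigma$ is connected. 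So $(1)$ says ``$\operatorname{link}_\Delta\sigma$ is connected for every face $\sigma$ with $\dim\operatorname{link}_\Delta\sigma\ge 1$'', while $(2)$ says ``$\operatorname{link}_\Delta\sigma$ is connected \emph{in codimension one} for every such $\sigma$''. One implication is immediate; the other is the combinatorial lemma that a pure simplicial complex all of whose links are connected is itself connected in codimension one, which I would prove by induction on dimension: connectedness of the $1$-skeleton produces a walk of facets whose consecutive members share at least a vertex $v$, the inductive hypothesis applied to $\operatorname{link}_\Delta v$ (all of whose links are links of $\Delta$, hence connected) promotes each step to a codimension-one walk inside $\operatorname{link}_\Delta v$, and concatenating these gives a codimension-one walk in $\Delta$.

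The main obstacle is the implication $(2)\Rightarrow(1)$, that is, recovering the depth inequality defining $(S_2)$ from the graph-theoretic connectedness hypotheses: a Hartshorne-type connectedness argument supplies only the easy direction $(1)\Rightarrow(2)$, so one must genuinely use the Reisner/Hochster-type local cohomology computation (or Theorem \ref{Yana}) to reduce $(S_2)$ to connectedness of all links, and then the strong-connectivity lemma to match this with $(2)$. I would also be careful with the two standard but essential inputs — the reduction of the Serre condition to $\BZ^n$-graded primes and the identity $\operatorname{depth} R_{P_\sigma}=\operatorname{depth} k[\operatorname{link}_\Delta\sigma]$ — since they carry the weight of the translation between the algebra and the combinatorics.
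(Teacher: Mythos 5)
Your proof is correct and covers the same equivalences, and your dictionary identifying $\mathcal{G}(R_{P_\sigma})$ with the dual graph of $\operatorname{link}_\Delta\sigma$ (and hence with the induced subgraph of $\bar{\mathcal{G}}(R)$ on facets containing $\sigma$) is exactly the content of the paper's $(2)\Leftrightarrow(3)$ argument. You also match the paper in reducing $(S_2)$ to a statement about local cohomology tested only at the $\mathbb{Z}^n$-graded primes; the paper does this via squarefreeness of $\Ext_S^{n-i}(R,\omega_S)$ (citing Yanagawa) rather than Hochster's formula, but these are interchangeable. Where you genuinely diverge is in closing the gap between ``all links connected'' (the Reisner/Hochster translation of $(S_2)$) and ``all links strongly connected'' (condition $(2)$). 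You do this cleanly with an explicit induction on dimension: connectedness of the $1$-skeleton gives a facet walk with consecutive facets sharing a vertex, and the inductive hypothesis inside the link of that shared vertex promotes each step to a codimension-one walk. The paper instead runs a contrapositive via an extremal choice of prime: it picks an extremal $P$ with $\mathcal{G}(R_P)$ disconnected, uses extremality to force the components of $\bar{\mathcal{G}}(R_P)$ to involve disjoint variable sets, concludes $\Delta_{R_P}$ itself is disconnected (not merely not strongly connected), and then contradicts $(S_2)$ through $H^1_{PR_P}(R_P)\neq0$. Both arguments are encoding the same combinatorial fact --- that a pure complex with all links connected is connected in codimension one --- but your version makes the lemma explicit and self-contained, which is arguably more transparent; the paper's extremality trick is shorter on the page but makes the reader reconstruct the underlying induction. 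One small caution on your side: the depth identity $\depth R_{P_\sigma}=\depth k[\operatorname{link}_\Delta\sigma]$ and the purity of links in a pure complex (needed so the inductive hypothesis applies) should both be stated, since they carry the weight of the translation.
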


\begin{proof}
$(2) \Leftrightarrow (3)$:  
Let $S$ be a subset of $\{ x_1,...x_n \}$.  Let $\bar {\mathcal{G}} (R)_S$ be the induced subgraph of $\bar{\mathcal{G}} (R)$ with 
\[ V(\bar {\mathcal{G}} (R)_S) = \{ v \in V(\bar {\mathcal{G}} (R)) | x_i \in v \ \textrm{ for all } x_i \in S \}. \]

$\bar{\mathcal{G}} (R)$ is locally connected if and only if $\bar {\mathcal{G}} (R)_S$ is connected for any choice of $S$.  By the definition of $\bar{\mathcal{G}} (R)$, $\bar {\mathcal{G}} (R)_S$ is a relabeling of a subgraph of $\mathcal{G} (R)$ whose vertices are the minimal prime ideals of $R$ contained in $P$, the prime generated by $\{ x_j | x_j \notin S\}$.  This subgraph is ${\mathcal{G}} (R_P)$.  Therefore ${\mathcal{G}} (R_P)$ connected for all primes generated by variables is equivalent to $\bar{\mathcal{G}} (R)$ being locally connected.

$(1) \Rightarrow (2)$:
Suppose ${\mathcal{G}} (R_P)$ is not connected for some nonempty set of primes generated by variables in $\Spec R$.  Note that every prime in this set must have height at least $2$.  Let us choose any prime $P$ maximal in this set.  If ${\mathcal{G}} (R_P)$ is not connected then $\bar {\mathcal{G}} (R_P)$ is not connected.  Suppose $\bar {\mathcal{G}} (R_P)$ contains two distinct components each of which contain $x_i \in \Supp R_P$.  Then $\bar {\mathcal{G}} (R_P)$ can be localized at $S=x_i$ yielding a disconnected graph, contradicting the maximality of $P$.  Therefore, each connected component of $\bar {\mathcal{G}} (R_P)$ is composed of disjoint sets of variables.  The vertices of $\bar {\mathcal{G}} (R_P)$, however, represent the facets of the simplicial complex $\Delta_{R_P}$.  Therefore, we have that $\Delta_{R_P}$ is not connected, which implies $H_{PR_P}^1(R_P) \neq 0$.  Thus $\depth R_P \leq 1$; however, $\height P \geq 2$, and thus $\dim R_P \geq 2$.  Therefore, $R$ is not $(S_2)$. 

$(1) \Leftarrow (2)$:
Using local duality, we have a ring $R$ satisfies $(S_2)$ if and only if $\dim \Ext _S^{n-i}(R,\omega_S) \leq i-2$ for all $i < d$ (see Lemma \ref{gluelemma}).  From~\cite{Ya00}, since $R$ is a Stanley-Reisner ring, $\Ext _S^{n-i}(R,\omega_S)$ is a square-free module.  Thus $\Ext _S^{n-i}(R,\omega_S)$ is uniquely determined by its primes generated by variables.  The dimension of $\Ext _S^{n-i}(R,\omega_S)$ determines if $R$ satisfies $(S_2)$.  Therefore, we only need consider primes generated by variables when showing that $R$ satisfies $(S_2)$.

If ${\mathcal{G}} (R_P)$ is connected for all primes generated by variables in $\Spec R$, then $\Delta_{R_P}$ is connected for all primes generated by variables with height at least $2$ in $\Spec R$.  When $\height P \geq 2$, $\Delta_{R_P}$ is connected if and only if $H_{PR_P}^1(R_P) = 0$.  Further, $H_{PR_P}^0(R_P) = 0$ for all $P \in \Spec R$, since $R$ is a Stanley-Reisner ring.  Thus $\depth R_P \geq 2$ for all primes generated by variables with height at least $2$, and $\depth R_P \geq 1$ for all primes generated by variables with height $1$.  Thus $R$ satisfies $(S_2)$.
\end{proof}
\begin{cor} Given a graph $\mathcal{G}$ with each vertex labeled with the same number of variables the following are equivalent:

\begin{enumerate}

\item $\mathcal{G}$ has properties (i) and (ii) from~\cite{EH10}.

\item $\mathcal{G}$ is $\bar{\mathcal{G}} (R)$ with $R$ an equidimensional Stanley-Reisner ring, and $\mathcal{G}$ has local connectedness.

\item $\mathcal{G}$ is $\bar{\mathcal{G}} (R)$ with $R$ an $(S_2)$ Stanley-Reisner ring.

\item $\mathcal{G}$ is $\bar{\mathcal{G}} (R)$ with $R=S/I$ an equidimensional Stanley-Reisner ring such that the Alexander dual of $I$, $I^\vee$, has first syzygy matrix with all linear entries.

\end{enumerate}
\end{cor}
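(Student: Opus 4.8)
The plan is to establish the chain of equivalences $(1)\Leftrightarrow(2)\Leftrightarrow(3)\Leftrightarrow(4)$ by assembling results already in hand; essentially no new argument is needed, only a careful translation through the relabeling $v_i\mapsto\bar v_i=(x_1\cdots x_n)/v_i$. The equivalence $(3)\Leftrightarrow(4)$ is immediate from Yanagawa's theorem (Theorem~\ref{Yana}), which states that a Stanley--Reisner ring $R=S/I$ satisfies $(S_2)$ exactly when $R$ is equidimensional and $I^\vee$ has a first syzygy matrix with only linear entries; reading off both directions of this biconditional uses the standard fact that an $(S_2)$ Stanley--Reisner ring is automatically equidimensional, so that the word ``equidimensional'' in (4) is not an extra assumption relative to (3). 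For $(2)\Leftrightarrow(3)$ one invokes Theorem~\ref{locally}: for an equidimensional Stanley--Reisner ring $R$, the graph $\bar{\mathcal G}(R)$ is locally connected if and only if $R$ satisfies $(S_2)$. Hence if $\mathcal G=\bar{\mathcal G}(R)$ with $R$ equidimensional and locally connected, then $R$ is $(S_2)$, giving (3); and conversely an $(S_2)$ ring is equidimensional with $\bar{\mathcal G}(R)$ locally connected, giving (2).

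It remains to prove $(1)\Leftrightarrow(2)$, which has two components. First, a graph $\mathcal G$ whose vertices are all labeled by monomials of the same degree $d$ (equivalently, by $d$-subsets of $\{1,\dots,n\}$) satisfies property (ii) if and only if $\mathcal G=\bar{\mathcal G}(R)$ for some equidimensional Stanley--Reisner ring $R$ of dimension $d$. For the reverse direction this is essentially the earlier observation that every dual graph has property (ii): writing $\bar v_i$ for the facet $F_i$ of $\Delta_R$ (a set of size $d$, being the complement of the $n-d$ variables generating the minimal prime $P_i$), a short computation identifies the defining edge condition $\height(P_i+P_j)=1$ with $|F_i\cap F_j|=d-1$, so $\bar{\mathcal G}(R)$ has property (ii). For the forward direction, given $\mathcal G$ with property (ii) and vertex set a family $F_1,\dots,F_m$ of $d$-subsets of $\{1,\dots,n\}$, let $\Delta$ be the simplicial complex generated by the $F_i$; since these are distinct sets of equal size, no one of them is contained in another, so $\Delta$ is pure of dimension $d-1$ and $R=k[\Delta]$ is equidimensional of dimension $d$, with minimal primes $P_i=\langle x_j : x_j\notin F_i\rangle$. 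The same complement computation shows the edge set of $\bar{\mathcal G}(R)$ is exactly the set of pairs $\{F_i,F_j\}$ with $|F_i\cap F_j|=d-1$, which by property (ii) is the edge set of $\mathcal G$; hence $\bar{\mathcal G}(R)=\mathcal G$. Second, property (i) is, word for word, the definition of local connectedness of $\bar{\mathcal G}(R)$: a path between $u$ and $v$ all of whose intermediate vertices contain $u\cap v$ is the same as a path all of whose vertices contain $u\cap v$, since the two endpoints automatically do. Combining these two components gives $(1)\Leftrightarrow(2)$, completing the cycle.

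There is no substantial obstacle. The only points demanding care are the bookkeeping identity $\height(P_i+P_j)=d-|F_i\cap F_j|$ under complementation, which is what makes the ``codimension one'' edge condition of the dual graph coincide with property (ii); the elementary observation that purity of the complex generated by a family of equal-size subsets is exactly equidimensionality of the associated Stanley--Reisner ring; and recording that $(S_2)$ already forces equidimensionality, so that condition (3) need not carry that hypothesis explicitly.
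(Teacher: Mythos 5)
Your proposal is correct and follows essentially the same route as the paper: $(3)\Leftrightarrow(4)$ by Yanagawa's theorem, $(2)\Leftrightarrow(3)$ by Theorem~\ref{locally} together with the fact that $(S_2)$ forces equidimensionality, and $(1)\Leftrightarrow(2)$ by reading property (ii) as the dual-graph edge condition and property (i) as local connectedness. The only difference is cosmetic: you spell out the height identity $\height(P_i+P_j)=d-|F_i\cap F_j|$ under complementation, which the paper's proof leaves implicit.
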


\begin{proof}
$(1) \Rightarrow (2)$:
Let each vertex label of $\mathcal{G}$ be a facet of a pure simplicial complex $\Delta$.  Let the Stanley-Reisner ring of $\Delta$ be $R$.  Then $\mathcal{G}$ has the same vertex set as $\bar{\mathcal{G}} (R)$.  Property (ii) implies that $\mathcal{G}$ has the same edge set as the dual graph of $R$.  Property (i) is equivalent to locally connected. 

$(1) \Leftarrow (2)$:
Property (ii) is required by the definition of a dual graph.  Property (i) is precisely the same as local connectedness.

$(2) \Leftrightarrow (3)$:
A ring with property $(S_2)$ is equidimensional.  Thus, by Theorem \ref{locally}, $\bar{\mathcal{G}} (R)$ is locally connected if and only if $R$ satisfies $(S_2)$.

$(3) \Leftrightarrow (4)$:
See Theorem \ref{Yana}.
\end{proof}

\section{Upper Bounds}\label{upper}
In this section we prove upper bounds for the diameters of dual graphs of $(S_2)$ Stanley-Reisner rings.  This is achieved by working with $\bar{\mathcal{G}} (R)$ (see Section \ref{intro}).

\begin{defn}
A \textit{strictly increasing path} is a path such that each subsequent vertex in the path has larger distance from the starting vertex of the path.
\end{defn}

\begin{theorem}\label{dimthree}
$\mu (3,n) \leq \max (2n-10,n-2)$.
\end{theorem}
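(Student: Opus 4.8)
The plan is to work with $\bar{\mathcal{G}}(R)$, the facet-ridge graph of a pure $2$-dimensional complex $\Delta$ on $n$ vertices whose Stanley-Reisner ring is $(S_2)$; by Theorem~\ref{locally} this graph is locally connected. Fix two facets $u, v$ realizing the diameter, and let $\gamma$ be a geodesic from $u$ to $v$. First I would observe that since each facet is a $3$-subset of $\{1,\dots,n\}$ and consecutive facets on a ridge path share exactly two elements, walking along $\gamma$ changes exactly one coordinate at each step. The key bookkeeping device is to track, for each vertex $x \in \{1,\dots,n\}$, the set of times $\gamma$ passes through a facet containing $x$; local connectedness forces this set of times to be an \emph{interval} (contiguous segment) of the path, because the subgraph induced on facets containing $x$ must be connected and $\gamma$ is a geodesic, so $\gamma$ cannot leave the star of $x$ and come back. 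This "no revisiting a vertex" principle is the engine of the whole argument.

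Next I would set up a counting/amortization argument. Label the edges of $\gamma$ by which coordinate is "swapped out" at that step (the element of the departing facet not in the arriving one) and which is "swapped in". Because each coordinate occupies a contiguous block of time along $\gamma$, a coordinate that is swapped in can never be swapped in again, and once swapped out it is gone forever. The three coordinates of the start facet $u$ may be swapped out (at most $3$ such steps before all original coordinates are gone); thereafter every step swaps in a genuinely new element of $\{1,\dots,n\}$ not among $u \cup v$ or previously used — and these are limited in number by $n$ minus the coordinates of $u$ and $v$. Carefully accounting for overlaps between $u$ and $v$, and for the first few and last few steps where the endpoints' coordinates are still in play, yields a bound essentially of the form $\diam \le 2(n - 3) - (\text{constant})$. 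Tuning the constant against small cases (where the complex is too small for the generic estimate and one gets only $n-2$) produces the stated $\max(2n-10,\, n-2)$.

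The main obstacle, and where I expect the real work to lie, is the \emph{tightness of the constant}: the crude "contiguous interval" argument gives something like $2n - O(1)$, but pinning down that the additive constant is exactly $10$ (rather than $6$ or $8$) requires a more delicate analysis of the behaviour near the two endpoints of the geodesic — in particular showing that when the path is long enough to beat $n-2$, the first several and last several steps cannot all be "productive" because the stars of the coordinates of $u$ (resp.\ $v$) must overlap the early (resp.\ late) part of the path in a constrained way, and local connectedness of the stars of those particular vertices, together with the $(S_2)$ condition applied to the localizations $R_P$ at the relevant coordinate primes, forces extra non-productive steps. A secondary obstacle is handling the degenerate regime separately: when $n$ is small the global structure is so rigid that the geodesic argument degenerates and one must fall back on the trivial bound $\diam \le (\text{number of facets}) - 1$ together with an upper bound on the number of facets of a pure $(S_2)$ complex of dimension $2$ on $n$ vertices, which accounts for the $n-2$ branch of the maximum.
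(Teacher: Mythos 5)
Your central device---the claim that, along a geodesic $\gamma$, the set of times at which a given vertex $x$ appears in a facet is a contiguous interval---is false, and that breaks the whole amortization. Local connectedness says the \emph{star} of $x$ induces a connected subgraph of $\bar{\mathcal{G}}(R)$, but a geodesic may perfectly well leave that star and re-enter it; connectedness of the star only guarantees some path between the two visits inside the star, not that $\gamma$ itself stays there. A concrete counterexample already occurs in the $(n,d)=(8,3)$ complex of Figure~\ref{a4}: the geodesic $ABC,\ ABD,\ ADG,\ AEG,\ AEF,\ DEF,\ DEH$ realizes the diameter $6$, yet the vertex $D$ appears at times $1,2$ and again at $5,6$, with a gap at $3,4$. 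Worse, if the contiguity claim \emph{were} true it would be far too strong: each of the (at most $n-3$) vertices outside $u$ could be swapped in at most once, yielding $\diam \leq n-3$, which contradicts the constructions in the paper (e.g.\ $\mu(3,10)\geq 9 > 7$). Your fallback for the $n-2$ branch is also off: the number of facets of a pure $2$-dimensional $(S_2)$ complex on $n$ vertices can be on the order of $\binom{n}{3}$, so ``number of facets minus one'' is not a useful bound there.

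The paper's proof replaces the global ``contiguous interval'' idea with a local, one-block-at-a-time version that is actually true. Fix a diametral pair $ABC$, $DEF$, stratify the vertices into layers $L_i$ by $\dist(ABC,\cdot)$, and group consecutive layers into \emph{blocks}: the first block $B_1$ runs from $L_0$ up to the last layer $L_c$ containing one of $A,B,C$. Local connectedness applied to the star of (say) $A\in L_c$ yields a path from $ABC$ to $L_c$ lying entirely in the star of $A$; its length is controlled by the $d=2$, ``$n_0-1$'' case, giving a bound of $n_0-3$ where $n_0$ is the number of variables appearing in $B_1$. Subsequent blocks are defined analogously by the last layer containing a variable of the previous block's terminal layer, and each is bounded the same way (with a correction of $-3$ to account for $A,B,C$ being excluded). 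Summing over blocks and splitting into cases $k=1$, $k=2$, $k\geq 3$ (with a careful sub-analysis for $k=2$ tracking the smallest layer containing $AD$, $AE$, $AF$) gives $\max(2n-10,\ n-2)$. So the contiguity you want \emph{is} exploited, but only for one well-chosen persistent variable per block rather than for every vertex simultaneously; that is what makes it true and keeps the bookkeeping honest.
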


\begin{proof}
We construct this upper bound in a manner inspired by~\cite{EH10}.  Let $R$ be an $(S_2)$ Stanley-Reisner ring, such that $\bar{\mathcal{G}} (R)$ has vertices $ABC,DEF$ with maximum distance in the graph.  Let us assign to each vertex $v$ the integer $\dist (ABC,v)$, where $\dist (ABC, v)$ denotes the length of the shortest path from $ABC$ to $v$.  We define layers $L_i = \{ v \in V(\bar {\mathcal{G}} (R)) | \dist (ABC,v) = i \}$.  

Define a block of layers to be a set of layers $\{ L_i | a \leq i \leq b \}$ for some integers $a,b$.  Let $c$ be the largest integer such that $L_c$ contains $A,B,$ or $C$.  Let $B_1$ = $\{ L_i | 0 \leq i \leq c \}$.  Let $n_0$ be the number of variables in $B_1$.  Without loss of generality, $A$ is contained in $L_c$.  By local connectedness, there exists a path consisting of vertices which all contain $A$ from $L_0$ to $L_c$.  This path can have maximum length $n_0-3$ (see the $d=2$, $n=n_0-1$ case in Appendix \ref{AppendixA}).

Next, we construct a second block.  Let $d$ be the largest integer such that $L_d$ contains a variable of $L_c$; call this variable $a_1$.  Let $B_2 = \{L_i | c < i \leq d \}$.  Let $n_1$ be the number of variables in $B_2$ but not in $B_1$.  The diameter of this layer will be bounded by the maximum length of a path in which each vertex contains $a_1$.  This path will have maximum length of $n_1 + n_0 -3 -3$ (the second $-3$ is to account for the fact that $A,B,C$ cannot be in the layers of this block).

Construct the third block $B_3$ in the same way.  Its longest path will have maximal length $n_2 + n_1 -3 -3$.  By construction, $B_3$ cannot have any elements in common with $B_1$.  Also, $B_3$ does not contain any variables in the $c+1$ layer (there are at least $3$ such variables).

Continue in this manner.

We sum the lengths of the blocks and add $1$ for each path between blocks to obtain:

$2n_0 + 2n_1 + \cdots + 2n_{k-2} + n_{k-1} - 5(k-2) - 8$ where $k$ is the number of blocks.

$k \geq 3$ implies $\diam \bar {\mathcal{G}} (R) \leq 2n-13$.

$k=1$ implies $n_0=n$ and $\diam \bar {\mathcal{G}} (R) \leq n-3$.

Let $k=2$.  If $ADE$, $ADF$, or $AEF$ is a vertex, then our path containing $A$ has length at most $n-3$, and some vertex in that path is adjacent to $DEF$.  Therefore, $\diam \bar {\mathcal{G}} (R) \leq n-2$.  

Let $L_j$ be the largest layer containing the vertex $x_i$.  Define $L_{x_i} = j$.  Let $L_A \geq L_B \geq L_C$.  Let $i^*= \min \{ i | AD \subseteq v \in L_i \} \leq \min \{ i | AE \subseteq v \in L_i \} \leq \min \{ i | AF \subseteq v \in L_i \}$.  Denote the vertex containing $AD$ in $L_{i^*}$ to be $v^*$ (if more than one exists choose any such one).  

Consider the case $L_A = n-3$.  To maximize $i^*$, we must have a path with tail: \[ Ax_iD, Ax_jD, Ax_jE, Ax_kE, Ax_kF. \]  Thus the path from $ABC$ to $v^*$ must have length at most $n-7$.   The maximum length from $v^*$ to $DEF$ is $n-3$.  Thus, $L_{D} \leq 2n-10$.

Consider the case where $L_A = n-4$.  Take $v \in L_{n-4}$ such that $A \in v$ and construct a path from $ABC$ to $v$.  If the path contains $AD, AE$, and $AF$, then the path from $ABC$ to $v^*$ must have length at most $n-7$.  If this path does not contain one of those, say $AF$, then $i^* \leq n-6$.  If $i^* = n-6$, then $L_A \geq L_B \geq L_C$ implies any strictly increasing path from $v^*$ to $DEF$ of vertices all containing $D$ cannot have both a vertex containing $B$ and a vertex containing $C$.  Thus $L_{D} \leq i^*+n-4=2n-10$.  Consider $i^* < n-6$.  Any path from $v^*$ to $DEF$ will be at most length $n-3$.  Thus $L_{D} \leq 2n-10$.

Now suppose $L_A \leq n-5$.  Then $i^* \leq n-5$.  Further, $B,C \notin L_j$ for all $j > n-5$.  Thus any path from $L_A$ to $DEF$ in which each vertex contains $D$, will have maximal length $n-3-2$.  Thus $L_{D} \leq 2n-10$.

\end{proof}

\begin{cor} \label{C3}
$\mu (4,8) \leq 6$.
\end{cor}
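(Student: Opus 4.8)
The plan is to bound $\dist(u,v)$ for arbitrary facets $u,v$ of $\Delta=\Delta_R$ (a pure $3$-dimensional complex on $\{1,\dots,8\}$, whose $4$-element facets are the vertices of $\bar{\mathcal{G}}(R)$) by splitting on $|u\cap v|$: for $u\cap v\neq\emptyset$ I will pass to a link and invoke a smaller value of $\mu$, while for $u\cap v=\emptyset$ I will run a layer argument that exploits the equality $8=2\cdot 4$.

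If $|u\cap v|=3$, then $u,v$ are adjacent by property (ii). If $1\le|u\cap v|\le 2$, set $T=u\cap v$; by the proof of Theorem \ref{locally} the induced subgraph $\bar{\mathcal{G}}(R)_T$ is, up to relabeling, the facet-ridge graph of $\operatorname{link}_\Delta(T)$, i.e.\ $\bar{\mathcal{G}}$ of its Stanley-Reisner ring. Since every facet of $\operatorname{link}_\Delta(T)$ has the form $F\setminus T$ for a facet $F\supseteq T$ of the pure complex $\Delta$, that link is pure of dimension $3-|T|$ and its ring is equidimensional of dimension $4-|T|$ in at most $8-|T|$ variables; moreover $\bar{\mathcal{G}}(R)_T$ is locally connected, because its restrictions $(\bar{\mathcal{G}}(R)_T)_{S'}=\bar{\mathcal{G}}(R)_{T\cup S'}$ are all connected as $R$ is $(S_2)$. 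Hence that link ring is $(S_2)$ by Theorem \ref{locally}, and as $\bar{\mathcal{G}}(R)_T$ is a connected subgraph containing $u$ and $v$ we get $\dist(u,v)\le\mu(4-|T|,8-|T|)$. For $|T|=2$ this is $\mu(2,6)\le 4$ (the $d=2$ case of Appendix \ref{AppendixA}), and for $|T|=1$ it is $\mu(3,7)\le\max(2\cdot 7-10,\,7-2)=5$ by Theorem \ref{dimthree}; in both cases it is at most $6$.

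The remaining case is $u\cap v=\emptyset$, where $u$ and $v$ partition $\{1,\dots,8\}$; relabel so that $u=\{1,2,3,4\}$. Layer the graph by $L_i=\{w:\dist(u,w)=i\}$ and let $c$ be the largest index for which some vertex of $L_c$ meets $\{1,2,3,4\}$ (well defined since $u\in L_0$). Any vertex at distance more than $c$ from $u$ is a $4$-subset of $\{1,\dots,8\}$ disjoint from $\{1,2,3,4\}$, hence equals $v$; since $v$ lies in a single layer and in a connected graph $L_i\neq\emptyset$ for all $i\le\dist(u,v)$, this already forces $\dist(u,v)\le c+1$. To finish I will show $c\le 5$: pick $w\in L_c$ meeting $\{1,2,3,4\}$ and relabel within $u$ so $1\in w$; then $u$ and $w$ both lie in $\bar{\mathcal{G}}(R)_{\{1\}}$, which as above is $\bar{\mathcal{G}}$ of the Stanley-Reisner ring of the pure $2$-dimensional complex $\operatorname{link}_\Delta(1)$, a ring that is equidimensional of dimension $3$, $(S_2)$, and in at most $7$ variables. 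Therefore $c=\dist(u,w)\le\dist_{\bar{\mathcal{G}}(R)_{\{1\}}}(u,w)\le\mu(3,7)\le\max(2\cdot 7-10,\,7-2)=5$ by Theorem \ref{dimthree}, so $\dist(u,v)\le c+1\le 6$.

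The step I expect to require the most care is the descent to links: one must check that $\operatorname{link}_\Delta(T)$ is pure of dimension exactly $3-|T|$, so that the governing invariant is $\mu$ in dimension $4-|T|$ (this is the observation about facets $F\setminus T$), and that the $(S_2)$ property is inherited by it (this is precisely the fact that the restriction operation on $\bar{\mathcal{G}}(R)$ preserves local connectedness, whereupon Theorem \ref{locally} applies). With those in place, the only combinatorial input is the trivial but decisive point that a $4$-subset of $\{1,\dots,8\}$ disjoint from $u$ must equal $v$, which collapses every layer above $L_c$ to the single vertex $v$ and hence forces $\dist(u,v)\le c+1\le 6$.
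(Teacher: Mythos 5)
Your argument is correct, and the first half (the case $u\cap v\neq\emptyset$) is essentially the paper's ``reduce to the $n=7$, $d=3$ case,'' just written out carefully: you pass to $\bar{\mathcal{G}}(R)_T$, identify it with the facet-ridge graph of $\operatorname{link}_\Delta(T)$, note that local connectedness restricts, and invoke $\mu(2,6)=4$ or $\mu(3,7)\le 5$ according to $|T|$. Where you diverge is the disjoint case. The paper's route is a one-liner: by connectivity there is some $v_3$ adjacent to $v_1$; since $v_1\sqcup v_2=\{1,\dots,8\}$, the single variable of $v_3$ not in $v_1$ lies in $v_2$, so $v_3\cap v_2\neq\emptyset$, and the already-proved bound gives $\dist(v_3,v_2)\le 5$, hence $\dist(v_1,v_2)\le 6$. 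Your version instead runs a layer argument: every vertex beyond the last layer $L_c$ that meets $u$ is a $4$-subset disjoint from $u$ and hence equals $v$, giving $\dist(u,v)\le c+1$; then $c\le 5$ is obtained by restricting to $\bar{\mathcal{G}}(R)_{\{1\}}$ and invoking $\mu(3,7)\le 5$. Both exploit the same decisive fact (that $u$ and $v$ partition $\{1,\dots,8\}$), and both are sound; the paper's is shorter, while yours makes more explicit exactly which restriction gives the $\mu(3,7)$ bound and why the restriction is again $(S_2)$.
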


\begin{proof} 
Let us consider $v_1, v_2 \in \bar {\mathcal{G}} (R)$.  If $v_1 \cap v_2 \neq \emptyset $ then we can reduce to the $n=7$, $d=3$ case, and thus $\dist (v_1,v_2) \leq 5$.  Thus  assume $v_1 \cap v_2 = \emptyset$.  Connectivity implies there exists $v_3$ adjacent to $v_1$.  The vertex $v_3$ must have a non-trivial intersection with $v_2$, thus $\dist (v_2, v_3) \leq 5$.  Therefore, $\dist (v_1,v_2) \leq 6$.  
\end{proof}

\begin{cor}
$\mu (d, d+4) = 6$.
\end{cor}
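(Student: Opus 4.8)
The plan is to establish $\mu(d,d+4) = 6$ by showing the upper bound $\mu(d,d+4) \le 6$ for all $d$ and then exhibiting (or invoking) a construction achieving diameter $6$. For the upper bound, the natural approach is induction on $d$, using Corollary \ref{C3} as the base case $d = 4$ (where $n = d+4 = 8$ and $\mu(4,8) \le 6$). For the inductive step I would argue exactly as in the proof of Corollary \ref{C3}: given an $(S_2)$ Stanley-Reisner ring $R$ of dimension $d$ with $n - d = 4$, and two vertices $v_1, v_2 \in \bar{\mathcal{G}}(R)$, consider whether $v_1 \cap v_2 = \emptyset$. If $v_1 \cap v_2 \neq \emptyset$, local connectedness gives a path between them all of whose vertices contain a common variable $x$; localizing (i.e.\ passing to $R_P$ for $P$ the prime generated by the complement of $\{x\}$) reduces to the case $n - d = 4$ but with $d$ decreased by one, so by induction $\dist(v_1, v_2) \le 6$ — in fact the reduction typically gives a better bound, since after enough localizations we reach the $d = 2$ row where $\mu(2, 6) = 4$. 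If $v_1 \cap v_2 = \emptyset$, connectivity of $\bar{\mathcal{G}}(R)$ (which holds by Hartshorne's theorem / Theorem \ref{locally}, since $(S_2)$ rings have connected dual graphs) produces a vertex $v_3$ adjacent to $v_1$; since $|v_3| = n - d = 4$ and $v_3$ differs from $v_1$ in exactly one variable, $v_3$ must meet $v_2$ nontrivially, so by the previous case $\dist(v_2, v_3) \le 6$, whence $\dist(v_1, v_2) \le 7$. To sharpen $7$ to $6$ one observes that $v_1$ and $v_2$ being disjoint with $|v_1| = |v_2| = 4$ and only $n = d+4$ total variables forces structural constraints; I would track the layer decomposition as in the proof of Theorem \ref{dimthree} to squeeze out the extra step, or alternatively note that the disjoint case cannot actually occur together with maximal distance in dimension $d + 4$ when the codimension is only $4$ — two disjoint $4$-element vertices already use $8$ of the $n = d+4$ variables, and a short case analysis on how the remaining $d - 4$ variables distribute among the intermediate vertices gives $\dist(v_1,v_2) \le 6$.

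For the lower bound $\mu(d, d+4) \ge 6$, I would invoke the explicit constructions from Appendix \ref{AppendixA} that justify Table \ref{table:1}: the entries $\mu(4,8) = 6$, and more generally the $n = 8$ column, already exhibit $(S_2)$ Stanley-Reisner rings whose dual graphs have diameter $6$ with $n - d = 4$. If a uniform-in-$d$ construction is needed, one can take the $d = 4$, $n = 8$ example and coning it off: adding a new variable to every facet raises $d$ and $n$ each by one, preserves $(S_2)$ (coning preserves Cohen–Macaulayness, hence in particular $(S_2)$), and does not change the dual graph, hence not the diameter. Iterating gives, for every $d \ge 4$, an $(S_2)$ complex of dimension $d$ with codimension $4$ and dual graph of diameter exactly $6$.

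The main obstacle I anticipate is the sharpening from $\dist(v_1,v_2) \le 7$ down to $\le 6$ in the disjoint case of the induction — the crude "connectivity plus one vertex $v_3$" argument loses an edge, and recovering it requires either a careful layer/block analysis in the spirit of Theorem \ref{dimthree} or a direct combinatorial argument exploiting that $n - d = 4$ is small enough that disjoint maximal-distance vertices are heavily constrained. Everything else — the base case from Corollary \ref{C3}, the localization reduction from local connectedness, and the lower-bound construction via coning the $\mu(4,8)=6$ example — is routine given the results already established in the excerpt.
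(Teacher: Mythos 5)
The core issue with your proof is a confusion about the vertex labels of $\bar{\mathcal{G}}(R)$, and it leads you to leave open a gap that does not actually exist. Vertices of $\bar{\mathcal{G}}(R)$ are products of $d$ variables (they represent facets of the complex), not $n-d = 4$ variables. You write ``$|v_3| = n-d = 4$'' and ``two disjoint $4$-element vertices,'' both of which describe the labels of $\mathcal{G}(R)$, not $\bar{\mathcal{G}}(R)$; yet the local-connectedness property you invoke is a property of $\bar{\mathcal{G}}(R)$. Once you fix the vertex size to $d$, the disjoint case evaporates: two disjoint $d$-element subsets of a $(d+4)$-element universe would force $2d \leq d+4$, i.e.\ $d \leq 4$. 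So for $d > 4$, every pair of vertices of $\bar{\mathcal{G}}(R)$ shares at least $2d - n = d - 4 = n - 8 > 0$ variables, and your anticipated ``main obstacle'' --- sharpening $\dist(v_1,v_2) \leq 7$ down to $\leq 6$ in a disjoint-vertex subcase of the inductive step --- simply never arises for $d>4$; the disjoint case is confined to the base case $d=4$, which is exactly Corollary~\ref{C3}.

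With this correction your approach can be made to work, but the paper dispenses with induction entirely: given any $v_1, v_2 \in V(\bar{\mathcal{G}}(R))$, they share at least $n-8$ variables; local connectedness yields a path between them whose vertices all contain those $n-8$ shared variables; stripping those variables off (equivalently, localizing at the prime generated by the complementary variables, which preserves $(S_2)$) lands on a facet-ridge graph with at most $8$ variables and facets of size $4$, so $\dist(v_1,v_2) \leq \mu(4,8) \leq 6$ in one step. This is precisely what your remark ``the reduction typically gives a better bound'' gestures at, but you should localize by \emph{all} the shared variables at once, not one at a time, so that you land exactly on the $\mu(4,8)$ case and no induction, layer decomposition, or residual case analysis is needed. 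Your lower-bound construction (coning the Figure~\ref{dim4} example) coincides with the paper's.
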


\begin{proof}
Let $R$ be a codimension-$4$, $(S_2)$ Stanley-Reisner ring.  Take $v_1, v_2 \in V(\bar {\mathcal{G}} (R))$.  Then, $v_1, v_2$ will each contain $n-4$ variables, and $v_1 \cap v_2$ will contain at least $n-8$ variables.  Thus there must be a path from $v_1$ to $v_2$ in which each vertex in that path contains those $n-8$ shared variables.  Thus $\mu (d,d+4) \leq \mu (4,8)$.  Furthermore, we may take the graph in Figure \ref{dim4} (see Appendix \ref{AppendixA}) and add the same $d-4$ variables to each vertex to show $\mu (d,d+4) \geq 6$.
\end{proof}

Let us now construct bounds for any values of $n$ and $d$.  

\begin{theorem}\label{upperboundallnd}
$\mu (d,n) \leq 2^{d-2}(n-d)$, for all $n$ and all $d \geq 2$.
\end{theorem}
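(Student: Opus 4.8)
The plan is to induct on $d$, using the case $d=3$ (Theorem~\ref{dimthree}, which gives $\mu(3,n) \le \max(2n-10, n-2) \le 2(n-3)$) as an anchor, together with a base case at $d=2$. For $d=2$ one checks directly from the structure of $(S_2)$ Stanley--Reisner rings in codimension giving $1$-dimensional facet-ridge graphs that $\mu(2,n) \le n-2 = n-d$, matching $2^{0}(n-2)$; this is the $d=2$, $n=n_0$ computation already referenced in Appendix~\ref{AppendixA}. So it remains to pass from $d-1$ to $d$.

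For the inductive step, let $R$ be an $(S_2)$ Stanley--Reisner ring of dimension $d$ and codimension $n-d$, and take two vertices $v_1, v_2 \in V(\bar{\mathcal{G}}(R))$ realizing the diameter. The idea mirrors the block decomposition in the proof of Theorem~\ref{dimthree} but phrased recursively. If $v_1 \cap v_2 \ne \emptyset$, pick a variable $x$ in the intersection; then every vertex that needs to be traversed can be taken (by local connectedness, Theorem~\ref{locally}) to lie in $\bar{\mathcal{G}}(R)_{\{x\}}$, which is the facet-ridge graph of the link of $x$, an $(S_2)$ complex of dimension $d-1$ on $n-1$ variables (the $(S_2)$ property is inherited by links, and $n_0 - 1$ variables survive). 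By induction $\dist(v_1, v_2) \le 2^{d-3}((n-1)-(d-1)) = 2^{d-3}(n-d) \le 2^{d-2}(n-d)$, which is even better than needed. The genuine case is $v_1 \cap v_2 = \emptyset$. Here I would build a strictly increasing path from $v_1$ organized into two "halves": traverse toward $v_2$ using, at each stage, a shared variable as a pivot so that each maximal pivot-respecting segment lives in a link of dimension $d-1$, hence has length at most $2^{d-3}(n-d)$ by induction; since $v_1$ and $v_2$ are disjoint but each maximal $(S_2)$ link restriction on the remaining variables already covers the path, one shows the whole path splits into at most two such segments (as in Corollary~\ref{C3}, where the $n=8$, $d=4$ case was reduced to two copies of the $n=7$, $d=3$ bound). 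Adding the two bounds $2^{d-3}(n-d) + 2^{d-3}(n-d) = 2^{d-2}(n-d)$ gives the claim.

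More carefully, to make the "at most two segments" argument rigorous I would argue as follows. Connectivity of $\bar{\mathcal{G}}(R)$ gives a neighbor $v_3$ of $v_1$; since $v_3$ agrees with $v_1$ in all but one variable and $v_1 \cap v_2 = \emptyset$, the intersection $v_3 \cap v_2$ can be empty only if $n-d = 1$ (a degenerate case handled separately), so for $n-d \ge 2$ we get $v_3 \cap v_2 \ne \emptyset$ and the previous paragraph applies to the pair $(v_3, v_2)$, giving $\dist(v_3, v_2) \le 2^{d-3}(n-d)$, whence $\dist(v_1, v_2) \le 1 + 2^{d-3}(n-d)$. This is much stronger than the stated bound, so the stated bound certainly holds; in fact the loose form $2^{d-2}(n-d)$ absorbs the edge case $n-d=1$ as well, where any connected $\bar{\mathcal{G}}(R)$ has diameter at most... indeed when $n-d=1$ each vertex is a single variable and property (ii) forces the graph to be complete, so the diameter is $1 = 2^{d-2}\cdot 1$, consistent.

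The main obstacle is the case $v_1 \cap v_2 = \emptyset$: one has to be sure that after one step to a neighbor $v_3$ of $v_1$, one genuinely lands in the situation $v_3 \cap v_2 \ne \emptyset$ and that the link construction indeed produces an $(S_2)$ complex on strictly fewer variables of dimension exactly $d-1$. The first point is combinatorial bookkeeping on how vertices of $\bar{\mathcal{G}}(R)$ overlap (it fails precisely when $n-d=1$, handled by hand). The second point — that the link of a vertex in an $(S_2)$ complex is again $(S_2)$ — follows because localizing $R$ at the prime generated by the remaining variables corresponds to taking the link, and $(S_2)$ localizes; alternatively one can invoke the characterization via $\Ext_S^{n-i}(R,\omega_S)$ used in the proof of Theorem~\ref{locally}. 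Once these two facts are in place, the induction closes cleanly and the bound $\mu(d,n) \le 2^{d-2}(n-d)$ follows for all $d \ge 2$.
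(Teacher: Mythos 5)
The inductive setup, the $d=2,3$ base cases, and the reduction in the case $v_1 \cap v_2 \neq \emptyset$ (passing to the link of a shared variable, an $(S_2)$ complex of ring dimension $d-1$ on at most $n-1$ variables) are all fine. But the key step in the case $v_1 \cap v_2 = \emptyset$ --- the claim that a neighbor $v_3$ of $v_1$ must satisfy $v_3 \cap v_2 \neq \emptyset$ --- is false in general. A neighbor has the form $v_3 = (v_1 \setminus \{a\}) \cup \{b\}$ with $b \notin v_1$, and $v_3 \cap v_2 \neq \emptyset$ iff $b \in v_2$; but since $v_1 \cap v_2 = \emptyset$ forces $n \geq 2d$, whenever $n > 2d$ there are $n-2d > 0$ choices of $b$ lying in neither $v_1$ nor $v_2$. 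The argument in Corollary~\ref{C3} that you cite as precedent works precisely because there $n = 2d = 8$, so $v_1 \cup v_2$ exhausts the variable set; that trick does not extend. Concretely, the $(S_2)$ complex with facets $ABC, ABD, ADE, DEF, DFG, FGH, GHI$ on nine variables has $v_1 = ABC$ disjoint from $v_2 = GHI$, and the unique neighbor $ABD$ of $v_1$ is also disjoint from $v_2$. Your claim that ``$v_3 \cap v_2$ can be empty only if $n-d=1$'' has the logic backwards: $v_1 \cap v_2 = \emptyset$ already forces $n - d \geq d \geq 2$. Note also that had the argument gone through it would prove $\mu(d,n) \leq 1 + 2^{d-3}(n-d)$, asymptotically far stronger than the stated bound; that should have been a warning sign.

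The paper's proof does not split on whether $v_1 \cap v_2$ is empty. It carries out the Larman/Eisenbrand--H\"ahnle block decomposition from the proof of Theorem~\ref{dimthree} at general $d$: partition the vertices into distance-layers from one endpoint, group consecutive layers into blocks each possessing a pivot variable common to all its layers, bound each block's diameter by $\mu(d-1,\cdot)$ via the induction hypothesis (every vertex in a block contains the pivot, so one is in the link), and then sum over blocks plus one edge between consecutive blocks. Because the number of blocks $k$ is not bounded in advance, the final step is an arithmetic estimate showing the sum is at most $2^{d-2}(n-d)$. This sidesteps the disjoint-endpoint problem entirely, at the price of bookkeeping about how many variables successive blocks may reuse.
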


\begin{proof}
Applying Theorem \ref{dimthree}, $\mu (3,n) \leq 2n-6$ for all $n \geq d$.  Thus the $d=3$ case holds.  We begin induction on $d$.  Let us partition the vertices of $\bar{\mathcal{G}} (R)$ into layers and blocks, as in the proof of Theorem \ref{dimthree}.  

If $\bar{\mathcal{G}} (R)$ has $1$ block, then there is a variable which is contained in each layer of the graph.  Thus $\diam \bar {\mathcal{G}} (R) \leq \mu (d-1,n-1)$.  But by induction $\mu (d-1,n-1) \leq 2^{d-3} (n-d) \leq 2^{d-2}(n-d)$.

Now suppose we have multiple blocks.  Then the first block will be bound in diameter by $\mu (d-1, n_0-1)$, where $n_0$ is the number of variables in the block.  The second block will be bound in diameter by $\mu (d-1, n_0+n_1-d-1)$.  The third block will be bound by $\mu (d-1, n_1+n_2-d-1)$, and so on.  Using $k$ for  the number of blocks and using the induction hypothesis, we get: 

\[ \mu (d,n) \leq 2^{d-3}(n_0-d)+1+2^{d-3}(n_1+n_0-d-1-(d-1))+1+2^{d-3}(n_2+n_1-d-1-(d-1))+1 \]
\[ + \cdots + 2^{d-3}(n_{k-1}+n_{k-2}-d-1-(d-1)) \leq 2^{d-2}n -2^{d-3}(d+2d(k-1))+k-1 \leq \]
\[ 2^{d-2}n-2^{d-3}(3d)+1 \leq 2^{d-2}(n-d). \]

\end{proof}

\begin{remark}\label{upperboundremark}
In~\cite{EH10}, Eisenbrand et al. proved Larman's~\cite{La70} bound of $2^{d-1}n$ holds for graphs with property (i).  Theorem \ref{upperboundallnd} shows a stronger bound holds for graphs with properties (i) and (ii).  In~\cite{La70}, Larman showed that $2^{d-3}n$ is an upper bound for the diameter of polytopes of dimension at least 3.  In~\cite{Ba74}, Barnette strengthened Larman's bound to $\frac{1}{3}2^{d-3}(n-d+\frac{5}{2})$.  Our bound is slightly weaker than the bounds of Barnette and Larman; however, in Section \ref{glueconstruct} we will show by construction that the bounds of Barnette and Larman do not hold in our generality (see Theorem \ref{gluingd4} and \ref{gluingd3}).
\end{remark}

In~\cite{KW67}, Klee and Walkup prove $\mu (d,n) \leq \mu (n-d,2(n-d))$.  This fact gives rise to \textit{the $d$-step conjecture}, which states $\mu (d,2d) \leq d$ for all $d$.  We may rewrite this conjecture as $\mu (n-d,2(n-d)) \leq n-d$.  Thus, the d-step conjecture is equivalent to the Hirsch conjecture.  A natural generalization of the d-step conjecture is $\mu (d,2d) \leq p(d)$ where $p(d)$ is a polynomial in $d$.  Again $d=n-d$, and thus we have that this conjecture is equivalent to the polynomial Hirsch conjecture. 

We examine upper bounds on $\mu (d,d+k)$.  We note $\mu (d,d+k) = \mu(k,2k)$ by \cite{KW67}.

\begin{theorem}
$\mu (d, d+5) \leq 8$.
\end{theorem}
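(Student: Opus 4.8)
The plan is to mimic the block-and-layer argument of Theorem~\ref{dimthree} and Theorem~\ref{upperboundallnd}, but now specialized to codimension $5$, and to feed in the already-established value $\mu(d,d+4)=6$ (equivalently $\mu(4,8)=6$) as the base case whenever a block has at most $4$ new variables. Since $\mu(d,d+5)=\mu(5,10)$ by~\cite{KW67}, it suffices to bound the diameter of $\bar{\mathcal G}(R)$ for an $(S_2)$ Stanley--Reisner ring of dimension $5$ and codimension $5$; so every vertex is a product of $n-d=5$ variables, and any two vertices $v_1,v_2$ share at least $5-5=0$ variables — which is not automatically helpful, so one cannot immediately reduce the number of variables. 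Instead I would fix two vertices $u,v$ realizing the diameter, set $\dist(u,\cdot)$ as the layer function, and carve the layers into blocks exactly as before: each block is governed by a variable that persists from one layer to a later one, and local connectedness (Theorem~\ref{locally}) gives, inside each block, a path all of whose vertices contain that shared variable, hence a path living in a graph of \emph{codimension $4$}, whose length is therefore at most $\mu(\text{dim},\text{dim}+4)=6$.

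The key steps, in order: (1) Reduce to $d=5$, $n=10$ via Klee--Walkup. (2) Set up the layer decomposition $L_i=\{w:\dist(u,w)=i\}$ and the block decomposition; record that consecutive blocks overlap in the single persisting variable and that moving between blocks costs one extra edge. (3) For the single-block case, note a common variable sits in every layer, so the diameter is at most $\mu(4,9)$; here I would need the bound $\mu(4,9)\le 7$ (visible in Table~\ref{table:1}), giving $\le 7\le 8$. (4) For $\ge 2$ blocks: with $5$ variables total and each block contributing at least $3$ genuinely new variables beyond the $\le 2$ it inherits (the argument in Theorem~\ref{dimthree} shows a block cannot reuse the $\ge 3$ variables appearing in the boundary layer of the previous block's terminal layer), the total variable budget of $5$ forces exactly $2$ blocks, each on at most $4$ variables locally. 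Each block then has diameter at most $\mu(\cdot,\cdot+4)=6$ — but this would give $6+1+6=13$, far too weak, so the real content is a sharper accounting: the two blocks share two variables, so together they involve all $5$ variables, and the endpoints $u,v$ each lie in only one block. I would argue that a block on a codimension-$4$ piece that also must avoid $3$ specified variables in its last layer actually has much smaller diameter — in fact one should get each block's contribution down to roughly $4$, and the between-block edge plus a short bridge yields $\le 8$.

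The main obstacle is precisely this last accounting: the naive block bound $\mu(\text{dim},\text{dim}+4)=6$ per block is too lossy, and to reach $8$ one must exploit (a) that with only $5$ variables there can be at most two blocks, (b) that the two blocks' variable sets overlap, constraining their combined size, and (c) the parity/monotonicity obstruction used at the end of Theorem~\ref{dimthree} — namely that on a strictly increasing path in the later block, the vertices cannot contain all of the "old" variables simultaneously, which cuts the effective codimension in that block. I expect the proof to split into a handful of cases according to $L_A$ (the largest layer containing a variable $A$ of $u$) exactly as in Theorem~\ref{dimthree}, and in the worst case $L_A$ small, one gets the first block short and the tail bounded by a codimension-reduced $\mu$, while in the case $L_A$ large the first block is long but then very few variables remain for the tail, forcing the tail to be short. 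Balancing these two regimes should pin the maximum at $8$; verifying that the arithmetic in every case genuinely closes at $8$ (rather than $9$) is the delicate part, and is where the sharp small-$n$ values from Table~\ref{table:1} — especially $\mu(4,8)=6$ and $\mu(3,7)=5$ — will be used as black boxes.
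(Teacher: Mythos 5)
Your plan is recognizably different from what the paper does, and it contains a gap you yourself flag but do not close. The paper does not invoke layers or blocks at all for this statement. After reducing to $d=5$, $n=10$ via Klee--Walkup, it fixes $v_1=ABCDE$ and does a direct case split on $|v_1 \cap v_2|$. If $|v_1 \cap v_2|\ge 2$, local connectedness on the two shared variables immediately gives $\dist(v_1,v_2)\le\mu(3,8)=6$. If $v_2=FGHIJ$ is disjoint from $v_1$, two steps along a shortest path produce (WLOG) $BCDEF$ and then $CDEFG$, which shares $\{F,G\}$ with $v_2$; the link of $\{F,G\}$ then gives $\dist(CDEFG,v_2)\le\mu(3,8)=6$, hence $\dist(v_1,v_2)\le 8$. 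The one-shared-variable case $v_2=EFGHI$ is handled analogously via an intermediate vertex containing two of $E,F,G,H,I$. The entire proof is a few lines, with $\mu(3,8)=6$ from Table~\ref{table:1} used as a black box. Your single-block case (reduce to $\mu(4,9)\le 7$) is fine, and in fact the ``$1+7$'' version of the paper's argument — one step to pick up a variable of $v_2$, then bound the link of that variable by $\mu(4,9)=7$ — would also work cleanly.

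The genuine gap is your multi-block case. You correctly observe that the naive block bound gives $6+1+6=13$, and then you write that a ``sharper accounting'' should cut each block's contribution to roughly $4$, that the variable budget ``forces exactly $2$ blocks,'' and that the parity/monotonicity obstruction from Theorem~\ref{dimthree} should carry over — but none of this is derived. Those assertions are precisely the content of the theorem, and as written they are expectations, not arguments. Moreover the block machinery of Theorems~\ref{dimthree} and~\ref{upperboundallnd} was designed for the regime of fixed small $d$ and growing $n$, where the number of blocks is the variable to be controlled; in the $d=n-d$ regime you get almost no leverage from the block count, and I do not see how to make the arithmetic close at $8$ (rather than $9$ or worse) without in effect re-deriving the intersection-size case analysis. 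The cleanest repair is to drop the block decomposition entirely for this statement and argue by intersection size, reducing to $\mu(3,8)$ or $\mu(4,9)$ as the paper does.
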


\begin{proof}

Choose any $v_1, v_2 \in \bar {\mathcal{G}} (R)$.  From~\cite{KW67}, we have $\mu (d,n) \leq \mu (n-d,2n-2d)$.  Thus we may reduce to the $d=5$ case.  Let $v_1 = ABCDE$.  We will consider cases based on $v_2$ to deduce the bound on diameter.

If $v_2 = FGHIJ$, then without loss of generality, we will have $BCDEF$ and $CDEFG$ in $V(\bar {\mathcal{G}} (R))$.  The vertices $CDEFG$ and $FGHIJ$ will have distance bounded by $\mu (3,8)$ (see Table \ref{table:1}).  Thus $\dist(CDEFG, FGHIJ) \leq 6$.  Thus $\dist ( v_1, v_2) \leq 8$.

If $v_2 = EFGHI$, then either $\bar{\mathcal{G}} (R)$ contains $ABCEF$ or $\bar{\mathcal{G}} (R)$ contains $ABCEJ$ and $ABEFJ$. For both scenarios, the graph will be bounded by two more than the $d=3$, $n=8$ case.  Thus $\dist (v_1, v_2) \leq 8$.

If $\degree (v_1 \cap v_2) \geq 2$, then $\dist (v_1, v_2)$ is bounded by the $d=3$, $n=8$ case and is at most $6$.

\end{proof}

\begin{theorem}
$\mu (d, d+6) \leq 14$.
\end{theorem}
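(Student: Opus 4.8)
The plan is to combine the Klee--Walkup reduction $\mu(d,n)\le\mu(n-d,2(n-d))$ with the localization principle underlying Theorem \ref{locally}, reducing the whole statement to producing, for any two vertices $v_1,v_2$ of $\bar{\mathcal{G}}(R)$, a single intermediate vertex $w$ that shares at least three variables with each of $v_1$ and $v_2$. First, by Klee--Walkup it suffices to bound $\mu(6,12)$, so we may assume $R$ is an $(S_2)$ Stanley--Reisner ring of dimension $6$ on $n=12$ variables; fix $v_1,v_2\in V(\bar{\mathcal{G}}(R))$ and, after relabeling, write $v_1=ABCDEF$. Each vertex of $\bar{\mathcal{G}}(R)$ is a set of exactly $6$ of the $12$ variables, and adjacent vertices differ in exactly one variable (property (ii)). The localization principle, extracted from the proof of Theorem \ref{locally}, reads: if $w_1,w_2\in V(\bar{\mathcal{G}}(R))$ satisfy $|w_1\cap w_2|\ge s$, then restricting $\bar{\mathcal{G}}(R)$ to the vertices containing a chosen set $S$ of $s$ common variables of $w_1,w_2$ gives the subgraph $\bar{\mathcal{G}}(R)_S$, which is a relabeling of $\mathcal{G}(R_P)$ for $R_P$ an $(S_2)$ Stanley--Reisner ring of dimension $6-s$ on at most $12-s$ variables; this is an induced subgraph of $\bar{\mathcal{G}}(R)$ containing $w_1,w_2$, so by local connectedness $\dist(w_1,w_2)\le\diam\bar{\mathcal{G}}(R)_S\le\mu(6-s,12-s)$. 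With $s=3$ and Table \ref{table:1} this yields the key inequality $\dist(w_1,w_2)\le\mu(3,9)=7$ whenever $|w_1\cap w_2|\ge 3$.

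Granting the existence of $w$ with $|w\cap v_1|\ge 3$ and $|w\cap v_2|\ge 3$, the theorem follows at once: $\dist(v_1,v_2)\le\dist(v_1,w)+\dist(w,v_2)\le 7+7=14$. To build $w$, set $t=|v_1\cap v_2|$. If $t\ge 3$, take $w=v_1$. If $t\le 2$, local connectedness (Theorem \ref{locally}) provides a path $v_1=w_0,w_1,\dots,w_m=v_2$ in $\bar{\mathcal{G}}(R)$ with $v_1\cap v_2\subseteq w_i$ for every $i$; since consecutive vertices differ in one variable, $|w_i\cap v_1|$ changes by at most $1$ per step, starts at $6$ and ends at $t\le 2$, so $|w_i\cap v_1|=3$ for some $i$, and we put $w=w_i$. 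Now $w$ consists of $6$ variables, exactly three of which lie in $v_1$; these three include the $t$ variables of $v_1\cap v_2$ (because $v_1\cap v_2\subseteq w$), so the remaining three variables of $w$ lie outside $v_1$. Among the $6$ variables not in $v_1$, exactly $6-t$ lie in $v_2$, so at least $3-t$ of $w$'s three outside-$v_1$ variables lie in $v_2$; hence $|w\cap v_2|\ge t+(3-t)=3$. Thus in every case $w$ has the required property.

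The argument is short, so the care goes into its two imported ingredients. The first is the localization step: one must know that the induced subgraph of $\bar{\mathcal{G}}(R)$ on the vertices containing a fixed set $S$ of variables is the facet--ridge graph of an $(S_2)$ Stanley--Reisner ring of dimension $\dim R-|S|$ on at most $n-|S|$ variables --- precisely the identification $\bar{\mathcal{G}}(R)_S=\mathcal{G}(R_P)$ with $R_P$ an $(S_2)$ localization from the proof of Theorem \ref{locally} --- together with the easy monotonicity $\mu(d,n')\le\mu(d,n)$ for $n'\le n$ and the observation that a connected induced subgraph gives an upper bound for distances in the ambient graph. The second, and the place where the constant $14$ is forced, is that localizing at one or two common variables of $v_1$ and $v_2$ only brings $\mu(6,12)$ down to $\mu(5,11)$ or $\mu(4,10)$, both of which Klee--Walkup identifies with $\mu(6,12)$ itself; this is why a direct localization is useless when $t\le 2$ and one must instead route through a vertex $w$ that is balanced between $v_1$ and $v_2$, paying $\mu(3,9)=7$ twice. (For $t\le 2$ an alternative, more computational route mirrors the proofs of $\mu(d,d+4)=6$ and $\mu(d,d+5)\le 8$: step one or two vertices away from $v_1$ toward $v_2$ --- the newly added variable is forced, up to relabeling, into $v_2$ in this regime --- then apply the $s=3$ bound; I expect the balanced-vertex formulation above to be the cleanest way to organize it.)
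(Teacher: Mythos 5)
Your proof is correct and takes essentially the same approach as the paper: reduce to $\mu(6,12)$ via Klee--Walkup, dispose of the case $|v_1\cap v_2|\geq 3$ by localizing at the common variables and invoking $\mu(3,9)=7$, and when $|v_1\cap v_2|\leq 2$ route through an intermediate vertex sharing at least three variables with each of $v_1,v_2$, paying $7+7=14$. You in fact supply a justification (a local-connectedness path from $v_1$ to $v_2$ together with an intermediate-value count on $|w_i\cap v_1|$) for the existence of that intermediate vertex, which the paper merely asserts.
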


\begin{proof}
Choose any $v_1, v_2 \in \bar {\mathcal{G}} (R)$.  As before, we can reduce to the $d=n-d$ case.  Let $v_1 = ABCDEF$.  

If $\degree (v_1 \cap v_2) \geq 3$, then $\diam \bar {\mathcal{G}} (R)$ is bounded by 7 (the $d=3$ $n=9$ case, see Table \ref{table:1}).

Now suppose $\degree (v_1 \cap v_2) \leq 2$.  There must exist a vertex $v_3$ such that $\degree (v_3 \cap v_1) \geq 3$ and $\degree (v_3 \cap v_2) \geq 3$.  

But then $\dist (v_1, v_3 ) \leq 7$, and $\dist (v_3, v_2 ) \leq 7$.  Thus $\dist (v_1, v_2) \leq 14$.

\end{proof}

\begin{theorem}\label{bigupperbound}
\[ \mu (d, d+k) \leq 3 \cdot 2^{\frac{n-d-5}{2}}(n-d). \]
\end{theorem}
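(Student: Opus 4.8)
The plan is to use the Klee--Walkup inequality $\mu(d,n)\le\mu(n-d,2(n-d))$ from~\cite{KW67} to reduce the statement to a bound on $\mu(k,2k)$, where $k=n-d$; the cases $k\le 6$ are already covered by the preceding theorems and Table~\ref{table:1} (one checks, e.g., $6\le 6\sqrt2$, $8\le 15$, $14\le 18\sqrt2$), so I may assume $k$ is large. Fix, then, a pure $(S_2)$ Stanley--Reisner ring $R$ of dimension $k$ in $2k$ variables, so that every vertex of $\bar{\mathcal{G}}(R)$ is a $k$-subset of $\Omega=\{x_1,\dots,x_{2k}\}$, and let $v_1,v_2$ be arbitrary vertices. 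The strategy is to locate a single \emph{midpoint} vertex $v_3$ whose intersections with $v_1$ and with $v_2$ both have size about $k/2$, and then to bound $\dist(v_1,v_3)$ and $\dist(v_3,v_2)$ separately using Theorem~\ref{upperboundallnd} after quotienting by the shared variables.

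First I would produce $v_3$. By Theorem~\ref{locally} the graph $\bar{\mathcal{G}}(R)$ is locally connected, so there is a path $v_1=u_0,u_1,\dots,u_m=v_2$ with $v_1\cap v_2\subseteq u_i$ for all $i$. Set $g(i)=|u_i\cap v_1|$ and $h(i)=|u_i\cap v_2|$; since consecutive vertices of the path agree in $k-1$ entries, $g$ and $h$ each change by at most $1$ per step, so if $i^{*}$ is minimal with $g(i^{*})\le h(i^{*})$ then $h(i^{*})-1\le g(i^{*})\le h(i^{*})$. On the other hand, $|\Omega\setminus(v_1\cup v_2)|=|v_1\cap v_2|$ and $u_{i^{*}}\supseteq v_1\cap v_2$, so
\[
g(i^{*})+h(i^{*})=|u_{i^{*}}\cap(v_1\cup v_2)|+|v_1\cap v_2|\ \ge\ \bigl(k-|v_1\cap v_2|\bigr)+|v_1\cap v_2|=k .
\]
Putting $v_3:=u_{i^{*}}$, $a:=|v_1\cap v_3|$, $b:=|v_2\cap v_3|$, these two facts give $\min(a,b)\ge\lfloor k/2\rfloor$ and $\max(a,b)\ge\lceil k/2\rceil$; after possibly interchanging $v_1$ and $v_2$ I may assume $a\ge\lceil k/2\rceil$ and $b\ge\lfloor k/2\rfloor$.

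Next I would bound $\dist(v_1,v_3)$. Local connectedness of $\bar{\mathcal{G}}(R)$ passes to the induced subgraph on the vertices containing $v_1\cap v_3$, and that subgraph retains property (ii); hence by the Corollary to Theorem~\ref{locally} it is, after deleting the $a$ fixed variables, the graph $\bar{\mathcal{G}}(R')$ of an equidimensional $(S_2)$ Stanley--Reisner ring $R'$ of dimension $k-a$ in at most $2k-a$ variables. Since a path in a subgraph is a path in $\bar{\mathcal{G}}(R)$, we get $\dist(v_1,v_3)\le\diam\bar{\mathcal{G}}(R')\le\mu(k-a,n')$ with $n'-(k-a)\le k$, and Theorem~\ref{upperboundallnd} bounds this by $2^{(k-a)-2}k$ (the cases $a\ge k-1$, where $\dist(v_1,v_3)\le 1$, are immediate). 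The same reasoning gives $\dist(v_3,v_2)\le 2^{(k-b)-2}k$. Using $a\ge\lceil k/2\rceil$ and $b\ge\lfloor k/2\rfloor$ and the triangle inequality,
\[
\dist(v_1,v_2)\ \le\ \bigl(2^{\lfloor k/2\rfloor-2}+2^{\lceil k/2\rceil-2}\bigr)k ,
\]
which is exactly $3\cdot 2^{(k-5)/2}k$ when $k$ is odd (the constant $3=1+2$ coming from the off-by-one between $\lceil k/2\rceil$ and $\lfloor k/2\rfloor$), and equals $2^{k/2-1}k\le 3\cdot 2^{(k-5)/2}k$ when $k$ is even since $2^{3/2}\le 3$.

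The step I expect to be the main obstacle is the second one: one must justify that the induced subgraph on the vertices containing a fixed face is again the dual graph (in the $\bar{\mathcal{G}}$-normalization) of an equidimensional $(S_2)$ Stanley--Reisner ring, so that $\mu$ and Theorem~\ref{upperboundallnd} genuinely apply to it and the recursion is legitimate; this is precisely the content of the Corollary to Theorem~\ref{locally} combined with the observation that local connectedness and property (ii) are inherited by these induced subgraphs. Beyond that, the remaining work is bookkeeping: verifying the small-$k$ base cases against the formula, handling the degenerate intersection sizes $a,b\in\{k-1,k\}$, and checking that the floor/ceiling estimates close with the stated constant.
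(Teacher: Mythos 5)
Your proof is correct and follows the same route as the paper: reduce to $\mu(k,2k)$ via Klee--Walkup, find a midpoint vertex $v_3$ whose intersections with $v_1$ and $v_2$ are each of size about $k/2$, and bound the two halves using Theorem~\ref{upperboundallnd}. The only difference is that your discrete intermediate-value argument (tracking $|u_i\cap v_1|$ and $|u_i\cap v_2|$ along a locally connected path) rigorously justifies a step that the paper leaves implicit, where it simply asserts $\mu(k,2k)\le\mu\bigl(\lfloor k/2\rfloor,\,2k-\lceil k/2\rceil\bigr)+\mu\bigl(\lceil k/2\rceil,\,2k-\lfloor k/2\rfloor\bigr)$ without proof.
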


\begin{proof}
We only need to consider the case $(n-d,2(n-d))$. 

We will first consider the case $n-d$ is even.  In this case, \[ \mu (n-d,2(n-d)) \leq 2 \mu \left ( \frac{1}{2} (n-d),\frac{3}{2} (n-d) \right ) \leq 2(2^{\frac{n-d}{2}-2}(n-d))=2^{\frac{n-d-2}{2}}(n-d) \leq 3 \cdot 2^{\frac{n-d-5}{2}}(n-d). \]

Next we consider the case $n-d$ is odd.  Then 

\[ \mu (n-d,2(n-d)) \leq \mu \left ( \left \lfloor{\frac{n-d}{2}}\right \rfloor , 2(n-d)- \left \lceil {\frac{n-d}{2}} \right \rceil \right ) + \mu \left ( \left \lceil{\frac{n-d}{2}}\right \rceil , 2(n-d)- \left \lfloor {\frac{n-d}{2}} \right \rfloor \right) \] 

\[ \leq  2^{\left \lfloor {\frac{n-d}{2}} \right \rfloor -2}(n-d)+ 2^{\left \lceil {\frac{n-d}{2}} \right \rceil -2}(n-d) = 3 \cdot 2^{\frac{n-d-5}{2}}(n-d). \]

\end{proof}

\section{Gluing}\label{glue}

In~\cite{Sa11}, Santos uses a gluing lemma to construct polyhedra with arbitrarily many facets whose diameters exceed $n-d$ by a fixed fraction.  We will construct an algebraic analogue to this gluing lemma, which will allow us to construct $(S_\ell)$ complexes with arbitrarily many facets whose diameters exceed $n-d$ by a fixed fraction.

The facet-ridge graph of an $(S_\ell)$ complex is $\bar{\mathcal{G}} (R),$ where $R$ is that complex's Stanley-Reisner ring.  Thus by making these $(S_2)$ complexes, we are making dual graphs of Stanley-Reisner rings satisfying $(S_2)$ with arbitrarily large $n$ whose diameters exceed $n-d$ by a fixed fraction.  

In this section, we will make use of the ext module $\Ext_S^i(R,S)$ and the cohomology module $H_{PR_P}^i(R_P)$.  For background on these modules, see a text on Homological Algebra (e.g. \cite{We94}).  We can describe $(S_\ell )$ in terms of the dimension of the Ext module.  This fact is key to the proof that proper gluing will maintain $(S_\ell )$.  The following lemma appears without proof in \cite[Proposition 3.1]{Va05}.

\begin{lemma}\label{gluelemma}
Let $\Delta $ be a pure complex with Stanley-Reisner ring $R$.  $R$ satisfies $(S_{\ell})$ if and only if \[ \dim \Ext _S ^{n-i} (R ,\omega _S) \leq i-{\ell} \quad \textrm{ for all } i < d. \]
\end{lemma}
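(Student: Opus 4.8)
The plan is to translate the Serre condition $(S_\ell)$ into vanishing/dimension statements about local cohomology, then dualize. Recall that for a finitely generated module over the Gorenstein polynomial ring $S = k[x_1,\dots,x_n]$ with canonical module $\omega_S = S(-n)$, local duality gives a graded isomorphism
\[
H^i_{\mathfrak m}(R)^\vee \cong \Ext_S^{n-i}(R,\omega_S),
\]
where $(-)^\vee$ denotes Matlis dual and $\mathfrak m = (x_1,\dots,x_n)$. Since Matlis duality preserves dimension of support (the support of $H^i_{\mathfrak m}(R)^\vee$ equals the support of $H^i_{\mathfrak m}(R)$), we have $\dim \Ext_S^{n-i}(R,\omega_S) = \dim H^i_{\mathfrak m}(R)$ for all $i$. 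So the claimed inequality $\dim \Ext_S^{n-i}(R,\omega_S) \le i - \ell$ for all $i < d$ is equivalent to $\dim H^i_{\mathfrak m}(R) \le i - \ell$ for all $i < d = \dim R$. The task thus reduces to proving: for a pure (equidimensional) Stanley-Reisner ring $R$ of dimension $d$, $R$ satisfies $(S_\ell)$ if and only if $\dim H^i_{\mathfrak m}(R) \le i - \ell$ for all $i < d$.

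First I would handle the forward direction. Assume $R$ satisfies $(S_\ell)$. I want to bound $\dim H^i_{\mathfrak m}(R)$ for $i < d$. The standard tool is: for any prime $\mathfrak p$, the local cohomology $H^j_{\mathfrak p R_\mathfrak p}(R_\mathfrak p)$ relates to $H^i_{\mathfrak m}(R)$ via the fact that $\mathfrak p \in \Supp H^i_{\mathfrak m}(R)$ forces a lower bound on $i$ in terms of $\depth R_\mathfrak p$ and $\dim R/\mathfrak p$. Concretely, one uses that if $\mathfrak p \in \Supp H^i_{\mathfrak m}(R)$ then $i \ge \depth R_\mathfrak p + \dim R/\mathfrak p$ (this follows from the behavior of local cohomology under the spectral sequence / from Grothendieck's results, and for Stanley-Reisner rings it can be seen combinatorially via Hochster's formula or Reisner-type criteria applied to links). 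Given $(S_\ell)$, $\depth R_\mathfrak p \ge \min\{\ell, \dim R_\mathfrak p\}$; combined with equidimensionality $\dim R_\mathfrak p + \dim R/\mathfrak p = d$, we get: if $\mathfrak p \in \Supp H^i_{\mathfrak m}(R)$ with $i < d$ then $\dim R_\mathfrak p \ge 1$, so either $\dim R_\mathfrak p \ge \ell$ and $i \ge \ell + \dim R/\mathfrak p$, giving $\dim R/\mathfrak p \le i - \ell$; or $\dim R_\mathfrak p < \ell$, in which case $\depth R_\mathfrak p = \dim R_\mathfrak p$, so $i \ge \dim R_\mathfrak p + \dim R/\mathfrak p = d > i$, a contradiction. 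Hence every $\mathfrak p \in \Supp H^i_{\mathfrak m}(R)$ with $i < d$ has $\dim R/\mathfrak p \le i - \ell$, i.e.\ $\dim H^i_{\mathfrak m}(R) \le i - \ell$.

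For the converse, assume $\dim H^i_{\mathfrak m}(R) \le i - \ell$ for all $i < d$; I must show $\depth R_\mathfrak p \ge \min\{\ell, \dim R_\mathfrak p\}$ for every prime $\mathfrak p$. Fix $\mathfrak p$ with $\dim R_\mathfrak p = c$; I want $\depth R_\mathfrak p \ge \min\{\ell, c\}$. Suppose not, so $\depth R_\mathfrak p = t < \min\{\ell, c\}$. Then $H^t_{\mathfrak p R_\mathfrak p}(R_\mathfrak p) \ne 0$, so $\mathfrak p$ lies in the support of $H^t_{\mathfrak p R_\mathfrak p}(R_\mathfrak p)$; by the local-cohomology comparison (again using $\dim R/\mathfrak p = d - c$ by equidimensionality, valid here since Stanley-Reisner rings are equidimensional when pure and the hypothesis is on all $i<d$), one produces an index $i = t + (d-c) < d$ with $\mathfrak p \in \Supp H^i_{\mathfrak m}(R)$, so $\dim R/\mathfrak p \le i - \ell$, i.e.\ $d - c \le t + d - c - \ell$, forcing $t \ge \ell$, contradicting $t < \ell$. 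It is convenient to restrict, via the square-free/$\mathbb N^n$-graded structure, to primes generated by variables, exactly as in the proof of Theorem~\ref{locally}; the Ext modules $\Ext_S^{n-i}(R,\omega_S)$ are square-free modules by \cite{Ya00}, so their dimensions are detected on such primes, and the link construction for Stanley-Reisner rings makes the comparison between $H^i_{\mathfrak m}(R)$ and local cohomology of localizations completely explicit through Hochster's formula.

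The main obstacle is pinning down the precise comparison lemma relating $\Supp H^i_{\mathfrak m}(R)$ to depth and coheight of primes, and making sure equidimensionality is invoked correctly so the arithmetic $i = \depth R_\mathfrak p + \dim R/\mathfrak p < d$ lines up. For Stanley-Reisner rings this is cleanest via Hochster's formula $H^i_{\mathfrak m}(R)_{-a} \cong \widetilde H^{i - |\mathrm{supp}(a)| - 1}(\mathrm{lk}_\Delta \mathrm{supp}(a); k)$, which simultaneously computes all the local cohomology modules appearing and identifies their graded pieces with reduced cohomology of links, letting one transfer the $(S_\ell)$ condition (which is a statement about links being connected in low degrees, by Reisner-type criteria) directly into the dimension bounds. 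I would therefore phrase the proof combinatorially through Hochster's formula rather than through abstract spectral sequences, since the paper's setting is entirely Stanley-Reisner and this keeps the equidimensionality bookkeeping transparent.
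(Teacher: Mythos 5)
Your reduction to local cohomology via Matlis duality contains a genuine error. You claim ``Matlis duality preserves dimension of support'' and conclude $\dim \Ext_S^{n-i}(R,\omega_S) = \dim H^i_{\mathfrak m}(R)$. But $H^i_{\mathfrak m}(R)$ is $\mathfrak m$-torsion, so $\Supp H^i_{\mathfrak m}(R) \subseteq \{\mathfrak m\}$ and $\dim H^i_{\mathfrak m}(R) \in \{-\infty, 0\}$ always, whereas the finitely generated module $\Ext_S^{n-i}(R,\omega_S)$ can have any dimension up to $d$. What \emph{is} true is that Matlis duality preserves annihilators, so $\dim S/\Ann H^i_{\mathfrak m}(R) = \dim \Ext_S^{n-i}(R,\omega_S)$; these two notions of ``dimension'' agree for finitely generated modules but diverge for Artinian ones, which is exactly where your reduction breaks. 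The same confusion propagates: each appearance of ``$\mathfrak p \in \Supp H^i_{\mathfrak m}(R)$'' in both directions of your argument should read ``$\mathfrak p \supseteq \Ann H^i_{\mathfrak m}(R)$,'' equivalently ``$\mathfrak p \in \Supp \Ext_S^{n-i}(R,\omega_S)$,'' since otherwise the only prime available is $\mathfrak m$ and the argument says nothing.

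Once corrected, your arithmetic does go through, and the corrected argument is essentially the paper's. The paper avoids the issue entirely by never passing through $H^i_{\mathfrak m}(R)$: it localizes at a prime $P$ of height $h$ first, applies local duality over the regular local ring $S_P$ (of dimension $h$) to identify $\Ext_{S_P}^{n-i}(R_P,S_P)$ with $H^{\,h-n+i}_{PS_P}(R_P)^\vee$, and directly converts the depth condition $\depth R_P \geq \min\{\ell, \dim R_P\}$ into vanishing of those localized local cohomology modules prime-by-prime, reading off the dimension bound on $\Ext_S^{n-i}(R,S)$ from which primes lie outside its support. Phrasing it that way keeps the finitely generated $\Ext$ module in view throughout and never requires reasoning about the support of an Artinian module. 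Your concluding suggestion to route the whole argument through Hochster's formula is plausible but left as a sketch; carrying it out would require replacing the commutative-algebra bookkeeping above with its combinatorial translation through links, which you do not do, so as written the proposal has an unfilled gap.
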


\begin{proof}

We reconstruct the proof from \cite{DH16}.

Suppose $R$ satisfies $(S_l)$.  If $i < l \leq \depth R$ then $\Ext_S^{n-i}(R,S) = 0$.  Thus, \[ \dim \Ext_S^{n-i}(R,S) = -\infty, \] and we are finished.  Otherwise, let us take $P$ a prime ideal of $S$ containing $I$.  Let $h$ be the height of $P$.  If $h < n-i+l$, then $h-n+i<l$.  Further $\dim R_P = h-n+d > h-n+i$.  Thus $\depth R_P > h-n+i$.  Therefore,   
\[ 0= H_{PS_P}^{(h-n+i)}(R_P) \cong \Ext _{S_P}^{n-i}(R_P,S_P) \cong \Ext _S^{n-i}(R,S)_P \quad \textrm{ for all } i = 0,...,d-1. \]  
So $P \notin \Supp \Ext _S ^{n-i} (R,S)$ whenever $\height P < n-(i-l)$.   Thus $\dim \Ext _S^{n-i}(R,S) \leq i-l$.

Suppose $\dim \Ext _S^{n-i} (R,S) \leq i-l$ for all $i=0,...,d-1$.  Let $\height I = c$.  Let $V_h$ be the set of prime ideals of $S$ of height $h$ containing $I$ for $h=c,...,n$.  Then we have the following equivalences:

\[ R \textrm{ satisfies } (S_l) \quad \Leftrightarrow \]

\[ \depth R_P \geq \min \{ l, h-c \} =: b \quad \textrm{ for all } h = c,...,n, \textrm{ for all } P \in V_h \quad \Leftrightarrow \]

\[ H_{PS_P}^i(R_P)=0 \quad \textrm{ for all } h=c,...,n, \textrm{ for all } P \in V_h, \textrm{ for all } i < b \quad \Leftrightarrow \]

\[ \Ext _{S_P}^{h-i}(R_P,S_P)=0 \textrm{ for all } h=c,...,n, \textrm{ for all } P \in V_h, \textrm{ for all } i < b \quad \Leftrightarrow \]

\[ \dim \Ext _{S_P}^{h-i}(R_P,S_P)<n-h \textrm{ for all } h=c,...,n, \textrm{ for all } P \in V_h, \textrm{ for all } i < b. \]

When $i < b \leq h-c$, $n-h+i < n-c = d$.  For all $ i < d$, $\dim \Ext _S^{n-i}(R,S) \leq i-l$.  Thus 
\[ \dim \Ext _S ^{h-i} (R,S) = \dim \Ext _S ^{n-(n-h+i)}(R,S) \leq n-h+i-l < n-h \textrm{ for all } h = c,\cdots, n, \textrm{ for all } i < b. \]

\end{proof}

\begin{theorem}\label{gluing}
Let $\Delta$ and $\Delta '$ be $(d-1)$-dimensional complexes on $n$ vertices whose Stanley-Reisner rings each satisfy $(S_{\ell})$.  The Stanley-Reisner ring of $\Delta \cup \Delta '$ satisfies $(S_{\ell})$ if the two complexes are glued along a pure complex of dimension at least $d-2$ whose Stanley-Reisner ring satisfies $(S_{\ell -1})$.
\end {theorem}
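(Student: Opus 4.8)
The strategy is to translate the $(S_\ell)$ condition for all three complexes into the Ext-dimension estimates of Lemma \ref{gluelemma} and to relate them through the Mayer--Vietoris sequence. Set $\Gamma := \Delta \cap \Delta'$, the complex along which the gluing takes place, and note that since $\Delta$ and $\Delta'$ are both pure of dimension $d-1$, so is their union; hence $k[\Delta]$, $k[\Delta']$ and $k[\Delta \cup \Delta']$ all have Krull dimension $d$, while $k[\Gamma]$ has Krull dimension $d_\Gamma := \dim \Gamma + 1$, which lies in $\{d-1,d\}$ by hypothesis. Thus the condition ``$i < d$'' in Lemma \ref{gluelemma} is the same for the two pieces and for the union.

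First I would write down the Mayer--Vietoris short exact sequence of $S$-modules, where $S = k[x_1,\dots,x_n]$,
\[ 0 \longrightarrow k[\Delta \cup \Delta'] \longrightarrow k[\Delta] \oplus k[\Delta'] \longrightarrow k[\Gamma] \longrightarrow 0, \]
which is immediate from the squarefree-monomial identities $I_{\Delta \cup \Delta'} = I_{\Delta} \cap I_{\Delta'}$ and $I_{\Gamma} = I_{\Delta} + I_{\Delta'}$. Applying the contravariant functor $\Ext_S^{\bullet}(-,\omega_S)$ produces, for each $i$, an exact sequence
\[ \Ext_S^{n-i}(k[\Delta],\omega_S) \oplus \Ext_S^{n-i}(k[\Delta'],\omega_S) \longrightarrow \Ext_S^{n-i}(k[\Delta \cup \Delta'],\omega_S) \longrightarrow \Ext_S^{n-i+1}(k[\Gamma],\omega_S), \]
so that $\Ext_S^{n-i}(k[\Delta \cup \Delta'],\omega_S)$ is an extension of a submodule of the right-hand term by a quotient of the left-hand term. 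Consequently
\[ \dim \Ext_S^{n-i}(k[\Delta \cup \Delta'],\omega_S) \le \max\bigl\{ \dim \Ext_S^{n-i}(k[\Delta],\omega_S),\ \dim \Ext_S^{n-i}(k[\Delta'],\omega_S),\ \dim \Ext_S^{n-i+1}(k[\Gamma],\omega_S) \bigr\}. \]

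Now fix $i < d$. Since $k[\Delta]$ and $k[\Delta']$ satisfy $(S_\ell)$, Lemma \ref{gluelemma} bounds the first two terms on the right by $i-\ell$. For the third term, observe that it occurs at the shifted index $n-(i-1)$ and that $i-1 < d-1 \le d_\Gamma$; since $k[\Gamma]$ satisfies $(S_{\ell-1})$, Lemma \ref{gluelemma} applied to $k[\Gamma]$ gives $\dim \Ext_S^{n-(i-1)}(k[\Gamma],\omega_S) \le (i-1)-(\ell-1) = i-\ell$. Hence all three terms are at most $i-\ell$, so $\dim \Ext_S^{n-i}(k[\Delta\cup\Delta'],\omega_S) \le i-\ell$ for every $i<d$, and the converse direction of Lemma \ref{gluelemma} yields that $k[\Delta \cup \Delta']$ satisfies $(S_\ell)$.

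The essential mechanism, and the only place the hypotheses interact, is this one-step index shift: in the Mayer--Vietoris long exact sequence the gluing complex $\Gamma$ contributes one cohomological degree above $\Delta$ and $\Delta'$, and that shift is exactly what allows an $(S_{\ell-1})$ gluing locus to be upgraded to $(S_\ell)$ in the union. I expect the only genuine care-points to be bookkeeping: checking that $\Delta \cup \Delta'$ is indeed pure of dimension $d-1$ so that Lemma \ref{gluelemma} is applied with the correct $d$; the boundary index $i=0$, where $\Ext_S^{n+1}(k[\Gamma],\omega_S)=0$ because $S$ has global dimension $n$ and $\Ext_S^{n}(k[\Delta\cup\Delta'],\omega_S)$ vanishes since $\depth k[\Delta\cup\Delta'] \ge 1$; and noting that the case $d_\Gamma = d-1$ versus $d_\Gamma = d$ is immaterial, as the estimate only uses $i-1 < d_\Gamma$.
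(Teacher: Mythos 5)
Your proposal is correct and is essentially the same argument the paper gives: the Mayer--Vietoris short exact sequence $0 \to k[\Delta \cup \Delta'] \to k[\Delta] \oplus k[\Delta'] \to k[\Delta \cap \Delta'] \to 0$, the induced long exact sequence in $\Ext_S^\bullet(-,\omega_S)$, and the one-step cohomological shift that turns the $(S_{\ell-1})$ hypothesis on the gluing locus into the $(S_\ell)$ bound on the union via Lemma \ref{gluelemma}. Your write-up is a bit more explicit than the paper's about the bookkeeping (purity of the union, the comparison $i-1 < d_\Gamma$, the vacuous $i=0$ boundary case), but the decomposition, the key lemma, and the mechanism are identical.
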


\begin {proof}
Let us use the notation $R_{\Delta}$ to refer to the Stanley-Reisner ring of $\Delta$. 

If $\ell =1$, any gluing will preserve the $(S_1)$ property, since every simplicial complex satisfies $(S_1)$.

Thus let us consider $\ell \geq 2$, noting every $(S_2)$ complex is pure~\cite{Ya00}.

Take the short exact sequence

\[ 0 \rightarrow R_{\Delta \cup \Delta '} \rightarrow R_{\Delta} \oplus R_{\Delta '} \rightarrow R_{\Delta \cap \Delta '} \rightarrow 0. \]

Then take the long exact sequence in $\Ext$:
\[
\cdots \rightarrow \Ext ^{n-i}_S(R_\Delta \oplus R_{\Delta '}, \omega _S) \rightarrow \Ext ^{n-i}_S(R_{\Delta \cup \Delta '}, \omega _S) \rightarrow \Ext ^{n-(i-1)}_S(R_{\Delta \cap \Delta '}, \omega _S) \rightarrow \cdots.
\]

Since $\Ext $ is an additive functor: \[ \Ext ^{n-i}_S(R_\Delta \oplus R_{\Delta '}, \omega _S) \cong \Ext ^{n-i}_S(R_\Delta, \omega _S)  \oplus \Ext ^{n-i}_S(R_{\Delta '}, \omega _S). \]

Since $R_\Delta $ and $R_{\Delta '}$ are $(S_{\ell})$, Lemma \ref{gluelemma} gives \[ \dim \Ext ^{n-i}_S(R_{\Delta}, \omega _S) \leq i- \ell \] and \[ \dim \Ext ^{n-i}_S(R_{\Delta '}, \omega _S) \leq i-\ell. \] 

Therefore \[ \dim \Ext ^{n-i}_S(R_{\Delta}, \omega _S)  \oplus \Ext ^{n-i}_S(R_{\Delta '}, \omega _S) \leq i-\ell \quad \textrm{ for all } i < d. \]

Suppose $R_{\Delta \cap \Delta '}$ is an equidimensional, $(S_{\ell -1})$ ring of dimension at least $d-1$.  Then we also have \[ \dim \Ext ^{n-(i-1)}_S(R_{\Delta \cap \Delta '}, \omega _S) \leq (i-1)-(\ell -1) = i-\ell \quad \textrm{ for all } i < d. \] 

Therefore, \[ \dim \Ext ^{n-i}_S(R_{\Delta \cup \Delta '}, \omega _S) \leq i- \ell \quad \textrm{ for all } i < d. \]

\end{proof}

Thus, gluing any two $(S_2)$ complexes along a pure subcomplex of dimension at least $d-2$ yields an $(S_2)$ complex.  In particular, gluing two $(S_2)$ complexes along a facet yields an $(S_2)$ complex.

\section{Complexes Built by Gluing}\label{glueconstruct}

In this section, we create lower bounds for large $n$ and $d$ by taking copies of the graphs in Appendix \ref{AppendixA} and gluing them along a shared vertex.  The graphs from Appendix \ref{AppendixA} are facet-ridge graphs of $(S_2)$ complexes.  Thus, gluing along a vertex is equivalent to gluing the complexes along a facet.  Therefore, by Theorem \ref{gluing}, gluing along a vertex yields a facet-ridge graph of an $(S_2)$ complex.  

\begin{theorem}\label{gluingd4}
$\mu (4,4k+4) \geq 6k$. 
\end{theorem}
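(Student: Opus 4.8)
The plan is to chain together $k$ copies of the diameter-$6$ example realizing $\mu(4,8)=6$ (Table \ref{table:1}, Appendix \ref{AppendixA}), to keep the $(S_2)$ property via Theorem \ref{gluing}, and to choose the identifications along the gluing facets so that the diameter is exactly additive. To set up, I would fix an $(S_2)$ complex $\Delta$ of dimension $3$ on $8$ vertices whose facet-ridge graph $\bar{\mathcal G}(R_\Delta)$ has diameter $6$, together with facets $A,B$ of $\Delta$ with $\dist(A,B)=6$; such data is supplied by the $n=8$, $d=4$ construction of Appendix \ref{AppendixA}. First note $A\cap B=\emptyset$: otherwise local connectedness of $\bar{\mathcal G}(R_\Delta)$ (Theorem \ref{locally}) produces a path from $A$ to $B$ all of whose facets contain the nonempty set $A\cap B$, hence a path of length $\le\mu(4-|A\cap B|,\,8-|A\cap B|)\le\mu(3,7)=5<6$ in $\bar{\mathcal G}(R_\Delta)$, a contradiction. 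Now take $k$ vertex-disjoint copies $\Delta^{(1)},\dots,\Delta^{(k)}$ of $\Delta$, with $A^{(i)},B^{(i)}$ the copies of $A,B$, and form $\Gamma$ by identifying, for $i=1,\dots,k-1$, the facet $B^{(i)}$ with $A^{(i+1)}$ along a bijection $\phi_i$ of their vertex sets, to be chosen below. Writing $G_0=A^{(1)}$, $G_i=B^{(i)}=A^{(i+1)}$ for $1\le i\le k-1$, and $G_k=B^{(k)}$, these are $k+1$ pairwise disjoint $4$-element sets, and the vertex set of $\Delta^{(i)}$ is $G_{i-1}\sqcup G_i$ (using $A\cap B=\emptyset$ and $|A|+|B|=8$), so $\Gamma$ has $4(k+1)=4k+4$ vertices. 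As $\Gamma$ is a pure $3$-dimensional complex built from $(S_2)$ complexes by repeatedly gluing along a single facet, Theorem \ref{gluing} and the remark after it show $R_\Gamma$ is $(S_2)$; it has dimension $d=4$ and codimension $n-d=4k$.

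It then remains to show $\diam\bar{\mathcal G}(R_\Gamma)\ge 6k$, which I would do by bounding $\dist_\Gamma(G_0,G_k)$ below. The key point is to choose the $\phi_i$ so that no edge of $\bar{\mathcal G}(R_\Gamma)$ joins a facet of $\Delta^{(i)}$ other than $G_i$ to a facet of $\Delta^{(i+1)}$ other than $G_i$. Such an edge would come from the two facets sharing a ridge (a $3$-element face); since one lies in $\Delta^{(i)}$ and the other in $\Delta^{(i+1)}$, that ridge must be a $3$-subset of $G_i$, so both facets are neighbors of $G_i$. A $3$-simplex has only four ridges, and by the relevant property of the base example — which one checks from Appendix \ref{AppendixA}, and which holds for instance whenever $A$ and $B$ are each adjacent to at most two other facets of $\Delta$ — the ridges of $B$ used by its neighbors in $\Delta$ and the ridges of $A$ used by its neighbors in $\Delta$ together number at most four, so $\phi_i$ can be chosen to carry the former family of ridges of $G_i$ off the latter. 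With this choice every edge of $\bar{\mathcal G}(R_\Gamma)$ lies inside a single copy $\Delta^{(i)}$, so $G_1,\dots,G_{k-1}$ are cut vertices and every path from $G_0$ to $G_k$ meets them in order. Hence, using that the induced subgraph of $\bar{\mathcal G}(R_\Gamma)$ on the facets of $\Delta^{(i)}$ is isomorphic to $\bar{\mathcal G}(R_\Delta)$ with $G_{i-1},G_i$ in the roles of $A,B$,
\[
\dist_\Gamma(G_0,G_k)=\sum_{i=1}^{k}\dist_{\Delta^{(i)}}(G_{i-1},G_i)=\sum_{i=1}^{k}\dist_\Delta(A,B)=6k,
\]
so $\mu(4,4k+4)\ge\diam\bar{\mathcal G}(R_\Gamma)\ge 6k$.

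The only genuine obstacle is the shortcut analysis of the second paragraph: the vertex count and the $(S_2)$ conclusion are immediate from Theorem \ref{gluing}, but the exact additivity of the diameter — the equality $\diam G(k[\Delta])+\diam G(k[\Delta'])=\diam G(k[\Delta\cup\Delta'])$ promised in the introduction — rests on being able to pick the gluing identifications so as to destroy every cross-copy edge, which in turn requires the diameter-realizing facets of the base example to have small degree in its facet-ridge graph. Thus the substance of the argument is verifying this low-degree property for the explicit Appendix \ref{AppendixA} complex; the rest is bookkeeping.
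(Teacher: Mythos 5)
Your proof takes the same route as the paper --- chain $k$ copies of the Figure \ref{dim4} complex by identifying the diameter-realizing facets $ABCD$ and $EFGH$, invoke Theorem \ref{gluing} to preserve $(S_2)$, and argue that the diameter is additive --- but you make explicit the one verification the paper leaves implicit: the gluing bijection must be chosen so that no ridge of the glue facet is shared between a neighbor on each side (which would create a cross-copy shortcut and shorten the diameter), and such a choice exists because $ABCD$ and $EFGH$ each have degree $2$ in Figure \ref{dim4}, so the at most four ridges used across both sides can be made disjoint. The argument is correct; the paper simply asserts ``all adjacent vertices lie in the same copy'' and displays the $k=2$ case in Figure \ref{4dfigure}, and your degree-$2$ observation is exactly what justifies that assertion.
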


\begin{proof}
We construct a graph composed of $k$ copies of the graph in Figure \ref{dim4} by gluing the vertex $ABCD$ to the vertex $EFGH.$  Each copy adds $6$ to the diameter, since all adjacent vertices lie in the same copy of Figure \ref{dim4}.  The new graph retains local connectedness by Theorem \ref{gluing}.
\end{proof}

The complex in Figure \ref{4dfigure} is an example when $k=2$.  

Thus, we have a lower bound of $\frac{3}{2} (n-d)$ when $n=4k+4$ and $d=4$. 

\begin{cor}
$\mu (4,4k+4+j) \geq 6k+j$. 
\end{cor}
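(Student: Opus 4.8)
The plan is to start from the graph $\mathcal{G}$ constructed in Theorem~\ref{gluingd4}, which is the facet-ridge graph $\bar{\mathcal{G}}(R)$ of an $(S_2)$ complex of dimension $3$ on $4k+4$ vertices, realizing $\mu(4,4k+4) \geq 6k$. I would then modify this complex by adjoining $j$ new variables, one at a time, in a way that both raises the vertex count (hence $n$) by $j$ and lengthens the diametral path by exactly $1$ at each step, while preserving dimension $4$ and the $(S_2)$ property.

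First I would pick the two endpoints $u, v$ of a diametral path in $\mathcal{G}$ with $\dist(u,v) = 6k$ (equivalently the two minimal primes of $R$ at maximal distance in the dual graph). To add one variable $x_{n+1}$: take the vertex $u$, which corresponds to a facet $F_u$ of $\Delta_R$, and attach a new facet $F'$ that shares a ridge with $F_u$ and contains $x_{n+1}$ in place of one of the original vertices of $F_u$; concretely, form a single new facet on the vertex set $(F_u \setminus \{w\}) \cup \{x_{n+1}\}$ for a suitable $w \in F_u$. This is the same "add a dangling facet along a ridge" move used implicitly in the Appendix~\ref{AppendixA} constructions. By Theorem~\ref{gluing} (gluing two $(S_2)$ complexes — here $\Delta_R$ and a single simplex — along a facet, or along a ridge, which is a pure $(d-2)$-dimensional $(S_1)$ complex), the resulting complex is again $(S_2)$. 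The new facet $F'$ is at distance $1$ from $F_u = u$ in the facet-ridge graph and at distance $6k+1$ from $v$, and since $v$ is already at maximal distance in the old graph, no shorter route to $F'$ can exist; hence the diameter of the new facet-ridge graph is at least $6k+1$ on $4k+5$ vertices.

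Iterating this $j$ times — each time extending the previous diametral endpoint by one fresh variable — produces an $(S_2)$ complex of dimension $3$ (Stanley-Reisner dimension $4$) on $4k+4+j$ vertices whose facet-ridge graph has diameter at least $6k+j$. By the Corollary in Section~\ref{s2}, this facet-ridge graph has properties (i) and (ii), so it is $\bar{\mathcal{G}}(R')$ for an $(S_2)$ Stanley-Reisner ring $R'$ of dimension $4$ and codimension $n-4$; therefore $\mu(4, 4k+4+j) \geq 6k+j$, as claimed.

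The main obstacle is making sure the single-variable extension genuinely increases the diameter by $1$ rather than creating a shortcut: one must check that the new facet $F'$ is not accidentally adjacent (or close) to some vertex nearer to $v$ than $u$ is, and that no previously-added "tail" facet provides an alternate short route. Choosing $F'$ to contain the brand-new variable $x_{n+1}$ handles this automatically — any path reaching $F'$ must pass through a facet containing $x_{n+1}$, and the only such facets are the ones in the tail we are building, forcing the path to run through $u$ first. The verification that each intermediate complex remains pure of the right dimension and that the gluing subcomplex has the dimension and $(S_{\ell-1}) = (S_1)$ property demanded by Theorem~\ref{gluing} is then routine.
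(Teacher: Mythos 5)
Your construction is essentially the paper's: the paper also appends a chain of ``tail'' facets to the far endpoint, each dropping one old variable and introducing one fresh variable (the paper does this explicitly for the relevant range of $j$, spelling out the three appended vertices). So the plan is right, but your justification for why the diameter rises by exactly one at each step contains a gap. You argue that ``any path reaching $F'$ must pass through a facet containing $x_{n+1}$''---but that is false: the immediate predecessor of $F'$ on a shortest path from $v$ need not contain $x_{n+1}$ (indeed $u$ itself does not). What the fresh variable actually buys you is that three of the four ridges of $F' = (F_u\setminus\{w\})\cup\{x_{n+1}\}$ contain $x_{n+1}$ and hence lie only in $F'$ and later tail facets. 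The fourth ridge, $F_u\setminus\{w\}$, is the potential shortcut: if it belongs to some facet $F''\neq F_u$ with $\dist(F'',v)<\dist(F_u,v)$, the diameter does not increase. So your phrase ``for a suitable $w$'' is carrying real weight, and you need to pin it down: one must choose $w$ so that $F_u\setminus\{w\}$ is a \emph{free} ridge of $F_u$ (contained in no other facet), which is possible because in Figure~\ref{dim4} the endpoint facet has degree two, leaving two free ridges. With that choice made explicit---and noting that the same property is inherited by each new tail facet, since each tail vertex again has degree two---your inductive argument closes and matches the paper's construction.
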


\begin{proof}
Start with the graph made of $k$ copies of Figure \ref{dim4}.  Then append the vertex 

$x_{6k-3}x_{6k-2}x_{6k-1}x_{6k+1}$.  If $j \geq 2$ then append  $x_{6k-2}x_{6k-1}x_{6k+1}x_{6k+2}$.  If $j = 3$ then append $x_{6k-1}x_{6k+1}x_{6k+2}x_{6k+3}$.
\end{proof}

For the $d=3$ case, we will consider three graphs:  The graph $G_0$ from Figure \ref{a4}, the graph $G_1$ from Figure \ref{a5}, and the graph $G_2$, which is the graph $G_1$ with $\{ I,J,K \}$ appended to the end.

\begin{theorem}\label{gluingd3}
$\mu (3,8k+2) \geq 10k-1$. 
\end{theorem}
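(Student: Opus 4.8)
The plan is to build an explicit graph by gluing together $k$ copies of a carefully chosen sequence of small examples along shared vertices, so that the diameter adds up across copies while the number of variables grows linearly. This is the same strategy used for Theorem \ref{gluingd4}, but for $d=3$ the bookkeeping is more delicate because the base pieces ($G_0$ from Figure \ref{a4} and $G_1, G_2$ from Figure \ref{a5}) have different numbers of vertices and variables, so one cannot simply take $k$ identical copies.

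First I would recall the relevant parameters: $G_0$ is the $d=3$, $n=10$ example realizing $\mu(3,10) \geq 9$ (from Table \ref{table:1}, the $n=10+$ entry for $d=3$), while $G_1$ (and its extension $G_2$ with $\{I,J,K\}$ appended) are the building blocks introduced just before the theorem. The key observation is that each $G_i$ is the facet-ridge graph $\bar{\mathcal{G}}(R)$ of an $(S_2)$ complex, and the two ``endpoint'' vertices of each piece — the ones realizing the diameter — can be taken to be facets. I would then glue piece $i+1$ onto piece $i$ by identifying a diameter-endpoint vertex of one with a diameter-endpoint vertex of the next, relabeling variables so that the only shared variables between consecutive pieces are exactly the $d=3$ variables of the glued facet, and no variables are shared between non-consecutive pieces.

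The main steps, in order, are: (1) verify via Theorem \ref{gluing} that gluing $(S_2)$ complexes along a single common facet (equivalently, gluing the facet-ridge graphs along a vertex) preserves $(S_2)$, so the resulting graph is again some $\bar{\mathcal{G}}(R)$ with $R$ an $(S_2)$ Stanley-Reisner ring of dimension $3$ — this is exactly the content quoted at the start of Section \ref{glueconstruct}; (2) check that the diameter of the glued graph is at least the sum of the diameters of the pieces minus the number of gluings, using that any geodesic must pass through the glue vertices because (by the disjointness of variable labels across non-consecutive pieces) no edge joins a vertex of one piece to a vertex of another except at the shared vertex; (3) count variables: if we use $a$ copies of $G_0$-type pieces contributing $n_0$ new variables and diameter $\delta_0$ each, and the appropriate number of $G_1$/$G_2$-type tail pieces, we want the totals to come out to $n = 8k+2$ and diameter $\geq 10k-1$. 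Concretely, I expect the construction to chain $k$ copies each contributing $8$ new variables (after accounting for the $3$-variable overlap at each glue, a piece on $11$ variables contributes $8$ fresh ones) and diameter $10$ each, with $k-1$ gluings costing $1$ apiece and one endpoint adjustment, yielding diameter $\geq 10k - (k-1) - (\text{something})$; the precise choice of $G_0$ versus $G_1$ versus $G_2$ at the two ends is tuned so the arithmetic lands exactly on $10k-1$ and $8k+2$. Finally (4) confirm the resulting graph, with its inherited labeling, genuinely has properties (i) and (ii) — property (ii) is automatic for a dual graph, and property (i) (local connectedness) follows from step (1) via Theorem \ref{locally}.

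The hard part will be step (3): getting the variable count and the diameter to match $8k+2$ and $10k-1$ simultaneously requires choosing the right mixture of the three base graphs $G_0$, $G_1$, $G_2$ at the interior and at the two ends, and verifying — from the explicit figures in the appendix — that each piece really does have diameter-realizing vertices that are facets available for gluing, and that the per-piece contribution to $n$ is exactly what the counting demands. The potential subtlety is that gluing at a vertex that is \emph{not} one of the two extreme vertices of a piece would shorten the contributed diameter, so one must check that the figures permit gluing precisely at the endpoints; once that is established, the lower bound on the diameter of the glued graph follows from the geodesic-must-cross-every-glue-vertex argument, and local connectedness is free from Theorem \ref{gluing}.
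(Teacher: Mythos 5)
Your overall plan — chain copies of the small examples $G_1, G_2$ (and possibly $G_0$) by identifying a diameter-extremal vertex of one piece with a diameter-extremal vertex of the next, invoke Theorem~\ref{gluing} to preserve $(S_2)$, and then count — is exactly the paper's strategy. But your step (2) introduces a spurious correction term that prevents the arithmetic from closing. You state that the glued graph has diameter at least the sum of the piece diameters \emph{minus the number of gluings}; this is the Holt--Klee polytope gluing bound $\diam P \geq \diam P_1 + \diam P_2 - 1$, which is not the relevant estimate here. In the Stanley--Reisner setting, two pieces glued along a single shared facet meet in exactly one vertex of the facet-ridge graph, which (because non-consecutive pieces share no variables, as you observe) is a cut vertex. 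Your own justification — ``any geodesic must pass through the glue vertices'' — then gives \emph{exact} additivity of the contributions: if you pick the glue vertices to realize the diameter of each piece, the distance between the two end vertices is precisely the sum of the piece diameters, with nothing subtracted. With your formula the best you could hope for is about $9k$, short of $10k-1$; with the corrected (exact) additivity the paper's choice of $k-1$ copies of $G_2$ (diameter $10$, contributing $8$ fresh variables after the $3$-variable overlap, the first copy using $11$) plus one terminal copy of $G_1$ (diameter $9$, $7$ fresh variables) gives diameter $10(k-1)+9 = 10k-1$ on $11 + 8(k-2) + 7 = 8k+2$ variables, which is what is needed. So the gap is concrete: replace ``sum minus number of gluings'' by exact addition along the chain of cut vertices, and then pin down the mix of pieces as $k-1$ copies of $G_2$ followed by one $G_1$; after that, everything in your outline goes through.
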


\begin{proof}
Construct a graph composed of $k-1$ copies of $G_2$ by gluing $ABC$ to $IJK$.  Then glue a copy of $G_1$ to the end.  Each copy of $G_2$ adds $10$ to diameter, since all adjacent vertices lie in the same copy of $G_2$.  Gluing $G_1$ adds $9$ to the diameter.  The new graph retains local connectedness by Theorem \ref{gluing}.
\end{proof}

This yields a lower bound of $\frac{5}{4}(n-d)$ when $n=8k+2$, $d=3$.

\begin{cor}
$\mu (3,8k+3+j) \geq 10k+j$ when $j \geq 0$. 
\end{cor}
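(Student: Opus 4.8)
The plan is to bootstrap from Theorem~\ref{gluingd3} by repeatedly appending a single vertex, exactly as in the dimension-$4$ corollary following Theorem~\ref{gluingd4}. Let $G$ be the graph built in Theorem~\ref{gluingd3}: it is $\bar{\mathcal{G}}(R)$ for an $(S_2)$ Stanley--Reisner ring $R$ on $8k+2$ variables with $\diam G \ge 10k-1$, and it has dimension $3$. I will attach $j+1$ new vertices, one at a time, each attachment raising the number of variables by $1$ and the diameter by $1$; the result is $\bar{\mathcal{G}}(R')$ for an $(S_2)$ Stanley--Reisner ring $R'$ on $(8k+2)+(j+1)=8k+3+j$ variables with diameter at least $(10k-1)+(j+1)=10k+j$, as required.

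For the attachment step, fix vertices $u,w_0$ of $G$ with $\dist_G(u,w_0)=\diam G\ge 10k-1$, taking for $w_0$ the terminal vertex of the last glued copy of $G_1$. From the explicit description of $G_1$ in Figure~\ref{a5} one reads off that at least one of the three ridges of the facet $w_0$ --- say $\rho_0=\{a,b\}$ --- lies in no other facet of $\Delta_R$. Introduce a fresh variable $x_{8k+3}$ and glue onto $\Delta_R$ the $2$-simplex $\sigma_0$ on $\{a,b,x_{8k+3}\}$ (extending both complexes to a common vertex set by adjoining unused vertices, which alters neither the $(S_2)$ property nor the relabeled graph). Since $\Delta_R\cap\sigma_0$ is the edge $\rho_0$, a pure complex of dimension $1=d-2$ whose Stanley--Reisner ring is $(S_1)$, Theorem~\ref{gluing} with $\ell=2$ shows $\Delta_R\cup\sigma_0$ is an $(S_2)$ complex, pure of dimension $2$. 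In $\bar{\mathcal{G}}$ the new vertex $w_1:=x_a x_b x_{8k+3}$ is adjacent only to facets containing $\rho_0$; as $\rho_0$ is free this forces $w_1$ to be a leaf with unique neighbor $w_0$, so $\dist(u,w_1)=\dist(u,w_0)+1$, distances among the old vertices being unchanged by the addition of a leaf. Now iterate: choose $\rho_1\subset w_1$ with $x_{8k+3}\in\rho_1$ (so $\rho_1$ is again free, since $x_{8k+3}$ occurs only in $w_1$), glue on the $2$-simplex on $\rho_1\cup\{x_{8k+4}\}$ to get a leaf $w_2$ with $\dist(u,w_2)=\dist(u,w_1)+1$, and so on. After $j+1$ steps $\dist(u,w_{j+1})\ge 10k+j$.

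The step I expect to be the main obstacle is the structural claim that the terminal vertex $w_0$ of the construction in Theorem~\ref{gluingd3} has a ridge lying in only one facet (equivalently, that $w_0$ is a leaf of $G$, or at worst that every facet containing the chosen ridge is at distance $\diam G$ from $u$). Without such input one only gets the trivial inequality $\dist(u,w_1)\le\dist(u,w_0)+1$, so this has to be checked directly against the explicit graphs $G_0,G_1,G_2$ of Appendix~\ref{AppendixA}. Granting it, the rest is routine: each attachment is a gluing along an edge, hence covered by Theorem~\ref{gluing}, so $(S_2)$ and purity persist and the dimension stays $2$ (i.e.\ $d=3$) throughout, and the variable and diameter counts add up for every $j\ge 0$.
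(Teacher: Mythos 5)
Your proof is correct and is essentially the paper's argument: the paper's one-line proof appends the vertices $\{n-2+i,\,n-1+i,\,n+i\}$ for $i=1,\dots,j+1$, which is precisely your tail of $j+1$ new facets glued successively along a free ridge of the previous terminal facet, introducing one fresh variable per step and invoking Theorem~\ref{gluing} to preserve $(S_2)$. The one point you flag as a possible obstacle --- that the terminal facet of the Theorem~\ref{gluingd3} construction has a free ridge, so that the appended facet is a genuine leaf and raises the distance to $u$ by exactly one --- is indeed the crux, but it is readily checked: in Figure~\ref{a5} the ridge $\{I,J\}$ of the end vertex $HIJ$ lies in no other facet (the only vertices containing $I$ are $AEI,BEI,BDI,AFI,CFI,CGI,GHI,HIJ$ and the only ones containing $J$ are $BEJ,CEJ,CDJ,BFJ,AFJ,AGJ,GHJ,HIJ$), so your concern dissolves and the bookkeeping $(8k+2)+(j+1)=8k+3+j$ variables and $(10k-1)+(j+1)=10k+j$ for the diameter is exactly what the paper records.
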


\begin{proof}
Take the graph constructed in Theorem \ref{gluingd3}.  Append the vertices $\{ n-2+i, n-1+i, n+i \}$ for $i = 1...j+1$.  This will be a locally connected graph of diameter $10k+j$.
\end{proof}

\begin{theorem}
$\mu (3,8k+3+j) \geq 10k+j+1$ when $j \geq 4$.
\end{theorem}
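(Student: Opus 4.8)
The plan is to rerun the gluing construction of Theorem~\ref{gluingd3} and then, instead of lengthening it only by the strip of triangular facets used in the corollary to that theorem, to splice in one copy of the $d=3$, $n=8$ example $G_0$ of Figure~\ref{a4} (which has diameter $6$). Swapping $G_0$ for five of the triangular facets keeps the vertex count fixed but raises the diameter by one; this is the source both of the ``$+1$'' over the corollary's bound $10k+j$ and of the hypothesis $j\ge 4$, since the corollary appends $j+1$ triangles and we must have at least five to spare.

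In detail, I would start from the facet-ridge graph $\Gamma$ produced in the proof of Theorem~\ref{gluingd3}: it equals $\bar{\mathcal G}(R)$ for an $(S_2)$ Stanley-Reisner ring on $8k+2$ variables, with $\diam \Gamma=10k-1$ attained between its two terminal facets $s$ and $t$. Relabel the eight variables of $G_0$ so that the facet at which $G_0$ realizes its diameter coincides with $t$ and the other five variables of $G_0$ are fresh, and fix a facet $F'$ of $G_0$ with $\dist_{G_0}(t,F')=6$. Gluing this copy of $G_0$ to $\Gamma$ along the $2$-simplex $t$ yields an $(S_2)$ complex by Theorem~\ref{gluing}: for $d=3$, $\ell=2$ one glues along a pure $2$-dimensional subcomplex, namely a simplex, which is Cohen--Macaulay and hence $(S_1)$, and $2\ge d-2$. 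The resulting facet-ridge graph has $(8k+2)+(8-3)=8k+7$ vertices. Now append, exactly as in the corollary to Theorem~\ref{gluingd3}, a strip of $j-4$ triangular facets beginning at $F'$; each step glues a $2$-simplex along an edge, so $(S_2)$ is preserved by Theorem~\ref{gluing}, one fresh variable is introduced, and the facet-ridge distance from $t$ grows by one. The graph so obtained has $8k+7+(j-4)=8k+3+j$ vertices, and its diameter is $10k+j+1$, realized between $s$ and the last appended triangle.

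The only point that needs genuine checking is that distances add across each of these gluings, i.e.\ that no shortcuts are created --- the same phenomenon used in the proofs of Theorems~\ref{gluingd4} and~\ref{gluingd3}. Consecutive pieces share a single vertex (the glued facet) and the union contains no edge joining a non-shared vertex of one piece to a non-shared vertex of the other, so any walk between vertices of different pieces must run through that shared vertex. Hence the distance from $s$ to the last triangle equals $\dist_\Gamma(s,t)+\dist_{G_0}(t,F')+(j-4)=(10k-1)+6+(j-4)=10k+j+1$, while attaching the pendant pieces leaves $\dist_\Gamma(s,t)=10k-1$ unchanged. The last input needed --- that $G_0$ really does have two facets at distance $6$ --- is furnished by the explicit construction of Appendix~\ref{AppendixA} (consistent with $\mu(3,8)=6$ in Table~\ref{table:1}); it is exactly the size of $G_0$, which contributes five new variables after the facet identification, that pins the hypothesis to $j\ge 4$.
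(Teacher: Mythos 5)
Your construction is, up to reversing the order of the chain, the same as the paper's: the paper glues $G_0$ at the \emph{front} of the chain ($G_0 + (k-1)\,G_2 + G_1 + $ triangles), while you glue $G_0$ at the \emph{back} ($\Gamma + G_0 + $ triangles, where $\Gamma = (k-1)\,G_2 + G_1$). Either ordering gives the same variable count and target diameter, and both rely on Theorem~\ref{gluing} for $(S_2)$. So the strategy is sound.

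There is, however, a gap in the step you yourself flag as "the only point that needs genuine checking." You assert that after identifying a diameter-endpoint facet of $G_0$ with $t=HIJ$ and keeping the other five variables of $G_0$ fresh, "the union contains no edge joining a non-shared vertex of one piece to a non-shared vertex of the other." This is not automatic; it depends delicately on \emph{which} endpoint facet you identify with $t$ and on the bijection between its three variables and $\{H,I,J\}$. An unwanted edge arises precisely when a neighbor of the identified facet inside $G_0$ and a neighbor of $t$ inside $\Gamma$ share the same $2$-element subset of $\{H,I,J\}$. In $G_1$ the neighbors of $HIJ$ are $GHI$ and $GHJ$, with ridges $\{H,I\}$ and $\{H,J\}$, both containing $H$. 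If you identify the facet $ABC$ of $G_0$ with $t$, its two neighbors $ABD$ and $BCE$ have ridges $\{A,B\}$ and $\{B,C\}$, both containing $B$; one checks that \emph{every} bijection $f:\{A,B,C\}\to\{H,I,J\}$ sends at least one of these ridges to $\{H,I\}$ or $\{H,J\}$, so an unwanted edge is unavoidable in that case. And that edge matters: it gives $\dist(s,F')\le \dist_\Gamma(s,GHI)+1+\dist_{G_0}(ABD,DEH)=(10k-2)+1+5=10k+4$, one less than the $10k+5$ you want before appending triangles, so the final diameter drops to $10k+j$.

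The construction is salvaged by identifying the \emph{other} endpoint $DEH$ of $G_0$ with $t$. The facet $DEH$ has a unique neighbor, $DEF$, with ridge $\{D,E\}$, and $\{D,E\}$ (equivalently $H_{G_0}$) can be sent to $\{I,J\}$ (equivalently $H_\Gamma$), a pair that appears in no facet of $\Gamma$ other than $HIJ$ itself. With that choice there is genuinely no edge between a non-shared facet of $G_0$ and a non-shared facet of $\Gamma$, the distances add, and you get diameter $10k+j+1$ as claimed. So the fix is to replace "the facet at which $G_0$ realizes its diameter" by "the facet $DEH$, with the variable $H$ of $G_0$ mapped to $H$ of $\Gamma$," and then the no-shortcut assertion must be verified rather than asserted. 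The paper's version has the same implicit dependence on the relabeling (gluing $G_0$ at the front requires $H_{G_0}\mapsto C_{G_2}$), so neither write-up spells this out, but yours raises the point explicitly and then asserts the wrong thing.
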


\begin{proof}
Glue $G_0$ to $k-1$ copies of $G_2$.  Then glue one copy of $G_1$.  If $j \geq 5$ then append $\{ n-6+i,n-5+i,n-4+i \}$ for $i = 5...j$.  This graph has diameter $10k+j+1$ and is locally connected by Theorem \ref{gluing}.
\end{proof}

Figure \ref{3dfigure} depicts the case where $d=3$, $n-d=12$.

\begin{remark}\label{gluingremark}
To construct a graph with $d \geq 4$, $\codim = 4k$, and diameter $\frac{3}{2}(4k)$, begin with the construction for $d=4$, $n=4k+4$ given above.  This construction has the desired diameter $\frac{3}{2}(4k)$.  Add the new variables $x_{n+1},...,x_{n+(d-4)}$ to each vertex of this graph to generate the desired graph.
\end{remark}

\begin{figure} [h]
\begin {center}
\resizebox {16cm} {!} {
\begin {tikzpicture}[-latex ,auto ,node distance =1.9 cm and 2.3cm ,on grid ,
semithick ,
state/.style ={ circle ,top color =white , bottom color = processblue!20 ,
draw,processblue , text=blue , inner sep=0pt, minimum width =.5 cm}]
\node[state] (C)
{$ABEG$};
\node[state] (A) [left=of C] {$BDEG$};
\node[state] (B) [right =of C] {$ACEG$};
\node[state] (D) [above =of C] {$ACEF$};
\node[state] (E) [below =of C] {$BDGH$};
\node[state] (F) [below =of E] {$CDFH$};
\node[state] (G) [right =of F] {$BDFH$};
\node[state] (H) [above =of G] {$CDGH$};
\node[state] (I) [left =of F] {$ACFH$};
\node[state] (K) [above =of A] {$CDEF$};
\node[state] (L) [below left =of K] {$BCDE$};
\node[state] (J) [below left =of L] {$ABCD$};
\node[state] (R) [left =of E] {$ABGH$};
\node[state] (M) [below right =of J] {$ABCH$};
\node[state] (N) [above =of B] {$ABEF$};
\node[state] (O) [right =of B] {$BEFH$};
\node[state] (P) [below right =of H] {$CEGH$};
\node[state] (Q) [above right =of P] {$EFGH$};
\node[state] (Y) [above right =of Q] {$FGHI$};
\node[state] (S) [below right =of Q] {$EFGL$};
\node[state] (T) [above right =of Y] {$GHIJ$};
\node[state] (U) [right =of Y] {$FGIK$};
\node[state] (V) [right =of T] {$EGIJ$};
\node[state] (W) [right =of V] {$EFIJ$};
\node[state] (X) [below right =of W] {$FIJL$};
\node[state] (Z) [right =of U] {$EFIK$};
\node[state] (AA) [right =of Z] {$EGIK$};
\node[state] (AB) [right =of S] {$EGJL$};
\node[state] (AC) [right =of AB] {$GHJL$};
\node[state] (AD) [right =of AC] {$FHJL$};
\node[state] (AE) [right =of AD] {$GIKL$};
\node[state] (AF) [above right =of AE] {$IJKL$};
\node[state] (AG) [above right =of S] {$EFKL$};
\node[state] (AH) [right =of AG] {$FHKL$};
\node[state] (AI) [right =of AH] {$GHKL$};

\path (W)[-] edge [bend left =0] node[below =0.15 cm] {$$} (X);
\path (AD)[-] edge [bend left =0] node[below =0.15 cm] {$$} (X);
\path (X)[-] edge [bend left =0] node[below =0.15 cm] {$$} (AF);
\path (AA)[-] edge [bend left =0] node[below =0.15 cm] {$$} (AE);
\path (AI)[-] edge [bend left =0] node[below =0.15 cm] {$$} (AE);
\path (AE)[-] edge [bend left =0] node[below =0.15 cm] {$$} (AF);
\path (AH)[-] edge [bend left =0] node[below =0.15 cm] {$$} (AI);
\path (AG)[-] edge [bend left =0] node[below =0.15 cm] {$$} (AH);
\path (S)[-] edge [bend left =0] node[below =0.15 cm] {$$} (AG);
\path (AG)[-] edge [bend left =0] node[below =0.15 cm] {$$} (Z);
\path (U)[-] edge [bend left =0] node[below =0.15 cm] {$$} (AH);
\path (AA)[-] edge [bend left =0] node[below =0.15 cm] {$$} (Z);
\path (Z)[-] edge [bend left =0] node[below =0.15 cm] {$$} (U);
\path (U)[-] edge [bend left =0] node[below =0.15 cm] {$$} (Y);
\path (AD)[-] edge [bend left =0] node[below =0.15 cm] {$$} (AC);
\path (AC)[-] edge [bend left =0] node[below =0.15 cm] {$$} (AB);
\path (AB)[-] edge [bend left =0] node[below =0.15 cm] {$$} (S);
\path (S)[-] edge [bend left =0] node[below =0.15 cm] {$$} (Q);
\path (Y)[-] edge [bend left =0] node[below =0.15 cm] {$$} (Q);
\path (T)[-] edge [bend left =0] node[below =0.15 cm] {$$} (Y);
\path (V)[-] edge [bend left =0] node[below =0.15 cm] {$$} (T);
\path (W)[-] edge [bend left =0] node[below =0.15 cm] {$$} (V);
\path (T)[-] edge [bend left =0] node[below =0.15 cm] {$$} (AC);
\path (V)[-] edge [bend left =0] node[below =0.15 cm] {$$} (AB);
\path (Z)[-] edge [bend left =0] node[below =0.15 cm] {$$} (W);
\path (AA)[-] edge [bend left =0] node[below =0.15 cm] {$$} (V);
\path (AC)[-] edge [bend left =0] node[below =0.15 cm] {$$} (AI);
\path (AH)[-] edge [bend left =0] node[below =0.15 cm] {$$} (AD);

\path (J)[-] edge [bend left =0] node[below =0.15 cm] {$$} (M);
\path (J)[-] edge [bend left =0] node[below =0.15 cm] {$$} (L);
\path (M)[-] edge [bend left =0] node[below =0.15 cm] {$$} (I);
\path (M)[-] edge [bend left =0] node[below =0.15 cm] {$$} (R);
\path (L)[-] edge [bend left =0] node[below =0.15 cm] {$$} (A);
\path (L)[-] edge [bend left =0] node[below =0.15 cm] {$$} (K);
\path (K)[-] edge [bend left =0] node[below =0.15 cm] {$$} (D);
\path (A)[-] edge [bend left =0] node[below =0.15 cm] {$$} (C);
\path (R)[-] edge [bend left =0] node[below =0.15 cm] {$$} (E);
\path (I)[-] edge [bend left =0] node[below =0.15 cm] {$$} (F);
\path (D)[-] edge [bend left =0] node[below =0.15 cm] {$$} (N);
\path (C)[-] edge [bend left =0] node[below =0.15 cm] {$$} (B);
\path (E)[-] edge [bend left =0] node[below =0.15 cm] {$$} (H);
\path (F)[-] edge [bend left =0] node[below =0.15 cm] {$$} (G);
\path (N)[-] edge [bend left =0] node[below =0.15 cm] {$$} (O);
\path (B)[-] edge [bend left =0] node[below =0.15 cm] {$$} (P);
\path (H)[-] edge [bend left =0] node[below =0.15 cm] {$$} (P);
\path (G)[-] edge [bend left =0] node[below =0.15 cm] {$$} (O);
\path (P)[-] edge [bend left =0] node[below =0.15 cm] {$$} (Q);
\path (O)[-] edge [bend left =0] node[below =0.15 cm] {$$} (Q);
\path (A)[-] edge [bend left =0] node[below =0.15 cm] {$$} (E);
\path (R)[-] edge [bend left =0] node[below =0.15 cm] {$$} (C);
\path (K)[-] edge [bend left =0] node[below =0.15 cm] {$$} (F);
\path (I)[-] edge [bend left =0] node[below =0.15 cm] {$$} (D);
\path (D)[-] edge [bend left =0] node[below =0.15 cm] {$$} (B);
\path (C)[-] edge [bend left =0] node[below =0.15 cm] {$$} (N);
\path (E)[-] edge [bend left =0] node[below =0.15 cm] {$$} (G);
\path (F)[-] edge [bend left =0] node[below =0.15 cm] {$$} (H);
\end{tikzpicture}
}
\end{center}
\caption{}
\label{4dfigure}
\end{figure}

\begin{figure}[h]

\begin {center}
\resizebox {16cm} {!} {
\begin {tikzpicture}[-latex ,auto ,node distance =1.6 cm and 1.6cm ,on grid ,
semithick ,
state/.style ={ circle ,top color =white , bottom color = processblue!20 ,
draw,processblue , text=blue , inner sep=0pt, minimum width =.4 cm}]
\node[state] (C)
{$DJN$};
\node[state] (A) [above=of C] {$EJO$};
\node[state] (B) [below =of C] {$FJM$};
\node[state] (D) [left =of C] {$DJM$};
\node[state] (E) [left =of B] {$FJO$};
\node[state] (F) [left =of A] {$EJN$};
\node[state] (G) [left =of F] {$EIN$};
\node[state] (H) [left =of E] {$FIO$};
\node[state] (I) [left =of D] {$DIM$};
\node[state] (J) [left =of G] {$EFI$};
\node[state] (K) [left =of I] {$DFI$};
\node[state] (L) [above left =of K] {$DEF$};
\node[state] (M) [right =of C] {$DKN$};
\node[state] (N) [right =of B] {$FKM$};
\node[state] (O) [right =of A] {$EKO$};
\node[state] (P) [right =of O] {$EKM$};
\node[state] (Q) [right =of N] {$FKN$};
\node[state] (R) [right =of M] {$DKO$};
\node[state] (S) [right =of P] {$ELM$};
\node[state] (T) [right =of Q] {$FLN$};
\node[state] (U) [right =of R] {$DLO$};
\node[state] (V) [right =of S] {$LMO$};
\node[state] (W) [right =of U] {$LMN$};
\node[state] (X) [above right =of W] {$MNO$};
\node[state] (Z) [left =of L] {$DEH$};
\node[state] (AA) [left =of Z] {$AEH$};
\node[state] (AB) [left =of AA] {$AEG$};
\node[state] (AC) [left =of AB] {$ADG$};
\node[state] (AD) [left =of AC] {$ABD$};
\node[state] (AE) [left =of AD] {$ABC$};
\node[state] (AF) [below =of AA] {$CDH$};
\node[state] (AG) [left =of AF] {$CDG$};
\node[state] (AH) [left =of AG] {$CEG$};
\node[state] (AI) [left =of AH] {$BCE$};

\path [-] (L) edge [bend left =0] node[below =0.15 cm] {$$} (K);
\path[-] (J) edge [bend left =0] node[below =0.15 cm] {$$} (L);
\path[-] (J) edge [bend left =0] node[below =0.15 cm] {$$} (K);
\path[-] (J) edge [bend left =0] node[below =0.15 cm] {$$} (G);
\path[-] (K) edge [bend left =0] node[below =0.15 cm] {$$} (H);
\path[-] (J) edge [bend left =0] node[below =0.15 cm] {$$} (H);
\path[-] (K) edge [bend left =0] node[below =0.15 cm] {$$} (I);
\path[-] (G) edge [bend left =0] node[below =0.15 cm] {$$} (F);
\path[-] (H) edge [bend left =0] node[below =0.15 cm] {$$} (E);
\path[-] (I) edge [bend left =0] node[below =0.15 cm] {$$} (D);
\path [-] (D) edge [bend left =0] node[below =0.15 cm] {$$} (C);
\path[-] (E) edge [bend left =0] node[below =0.15 cm] {$$} (B);
\path[-] (F) edge [bend left =0] node[below =0.15 cm] {$$} (A);
\path[-] (A) edge [bend left =0] node[below =0.15 cm] {$$} (O);
\path[-] (C) edge [bend left =0] node[below =0.15 cm] {$$} (M);
\path[-] (B) edge [bend left =0] node[below =0.15 cm] {$$} (N);
\path[-] (D) edge [bend left =0] node[below =0.15 cm] {$$} (B);
\path[-] (E) edge [bend left =0] node[below =0.15 cm] {$$} (A);
\path[-] (F) edge [bend left =0] node[below =0.15 cm] {$$} (C);
\path[-] (P) edge [bend left =0] node[below =0.15 cm] {$$} (O);
\path[-] (R) edge [bend left =0] node[below =0.15 cm] {$$} (M);
\path[-] (Q) edge [bend left =0] node[below =0.15 cm] {$$} (N);
\path[-] (Q) edge [bend left =0] node[below =0.15 cm] {$$} (M);
\path[-] (P) edge [bend left =0] node[below =0.15 cm] {$$} (N);
\path[-] (R) edge [bend left =0] node[below =0.15 cm] {$$} (O);
\path[-] (P) edge [bend left =0] node[below =0.15 cm] {$$} (S);
\path[-] (R) edge [bend left =0] node[below =0.15 cm] {$$} (U);
\path[-] (Q) edge [bend left =0] node[below =0.15 cm] {$$} (T);
\path[-] (S) edge [bend left =0] node[below =0.15 cm] {$$} (W);
\path[-] (U) edge [bend left =0] node[below =0.15 cm] {$$} (V);
\path[-] (T) edge [bend left =0] node[below =0.15 cm] {$$} (W);
\path[-] (S) edge [bend left =0] node[below =0.15 cm] {$$} (V);
\path[-] (X) edge [bend left =0] node[below =0.15 cm] {$$} (W);
\path[-] (X) edge [bend left =0] node[below =0.15 cm] {$$} (V);
\path[-] (V) edge [bend left =0] node[below =0.15 cm] {$$} (W);

\path [-] (L) edge [bend left =0] node[below =0.15 cm] {$$} (Z);
\path[-] (Z) edge [bend left =0] node[below =0.15 cm] {$$} (AA);
\path[-] (AA) edge [bend left =0] node[below =0.15 cm] {$$} (AB);
\path[-] (AB) edge [bend left =0] node[below =0.15 cm] {$$} (AC);
\path [-] (AC) edge [bend left =0] node[below =0.15 cm] {$$} (AD);
\path[-] (AD) edge [bend left =0] node[below =0.15 cm] {$$} (AE);
\path[-] (AF) edge [bend left =0] node[below =0.15 cm] {$$} (Z);
\path[-] (AF) edge [bend left =0] node[below =0.15 cm] {$$} (AG);
\path[-] (AG) edge [bend left =0] node[below =0.15 cm] {$$} (AH);
\path[-] (AH) edge [bend left =0] node[below =0.15 cm] {$$} (AI);
\path[-] (AE) edge [bend left =0] node[below =0.15 cm] {$$} (AI);
\path[-] (AB) edge [bend left =0] node[below =0.15 cm] {$$} (AH);
\path[-] (AC) edge [bend left =0] node[below =0.15 cm] {$$} (AG);
\end{tikzpicture}
}
\caption{}
\label{3dfigure}
\end{center}
\end{figure}

\section{Final Remarks}

These results raise several questions for further study.  Primarily, is $\mu (d,n)$ bounded above by a polynomial in the codimension?  A positive answer to this question would affirm the polynomial Hirsch conjecture.  Our work still leaves the possibility that $\mu (d,n)$ is bounded by a linear function.

Another interesting question is, do the bounds of Larman~\cite{La70} and Barnette~\cite{Ba74} hold for larger values of $d$?  We have seen that the bounds of Larman and Barnette do not hold for our $d=3$ case.  Figure \ref{4dfigure} shows the bound of Barnette does not hold for $d=4$, and we can cone over this complex to show the bound of Barnette does not hold for $d=5$; however, we do not have counterexamples in any higher dimensions.

In this paper we found graphs of maximal diameter for $(S_2)$ Stanley-Reisner rings with $d=3,4$ and small $n$.  We then used those graphs in conjunction with gluing to make graphs with large diameters with respect to codimension.  It would be valuable to know what the largest diameter would be for graphs of $(S_2)$ Stanley-Reisner rings with $d=5,6$ and small $n$, specifically, $d=5,n=10$ and $d=6,n=12$.  Answers to these questions could lead to new asymptotic lower bounds and could give insight on how these bounds would grow with respect to $d$.

\appendix\section{Constructing graphs to identify lower bounds}\label{AppendixA}

In this appendix, for $d$ and $n$ fixed, we construct lower bounds for the maximum diameters of dual graphs of equidimensional Stanley-Reisner rings satisfying $(S_2)$.  We achieve this by constructing graphs with properties (i) and (ii).  Recall, $\mu (d,n)$ is the largest diameter of a dual graph of an equidimensional, $(S_2)$ Stanley-Reisner ring of dimension $d$ and codimension $n-d$.  

\begin{theorem}
Table 1 (see Introduction) presents $\mu (d,n)$ for small values of $d$ and $n$. 
\end{theorem}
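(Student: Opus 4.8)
The plan is to establish the value of $\mu(d,n)$ in each cell of Table~\ref{table:1} by proving a matching upper and lower bound. Every lower bound is witnessed by an explicit complex: for the entries with $n\le 9$ by the complexes constructed in this appendix (for instance Figure~\ref{dim4} realises $\mu(4,8)\ge 6$), and for the row $n\ge 10$ by the glued complexes of Section~\ref{glueconstruct}. In each case one verifies that the exhibited graph really is the facet--ridge graph $\bar{\mathcal G}(R)$ of an equidimensional $(S_2)$ Stanley--Reisner ring $R$ --- equivalently, by Theorem~\ref{locally} and its corollary, that its vertex labels all have the same size, that it has property~(ii), and that it is locally connected --- and then reads off its diameter. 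The substance of the argument is therefore the upper bounds.

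For the column $d=2$: a Stanley--Reisner ring of dimension $2$ satisfying $(S_2)$ is Cohen--Macaulay, so its complex is a connected $1$-dimensional complex, i.e.\ a connected graph $H$ on at most $n$ vertices; then $\bar{\mathcal G}(R)$ is the line graph $L(H)$, and $\diam L(H)\le n-2$ with equality when $H$ is a path. This gives $\mu(2,n)=n-2$ for every $n$, which is exactly the $d=2$ column (including the row $n\ge 10$). For the cells with $n\le 2d$ I would invoke the reduction $\mu(d,n)=\mu(n-d,2(n-d))$ of \cite{KW67} recorded in Section~\ref{upper}: this collapses $\mu(4,6)$ to $\mu(2,4)=2$ and $\mu(4,7)$ to $\mu(3,6)$, which is treated next. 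For the remaining cells with $n\le 8$ I would use the general upper bounds already proved: Theorem~\ref{dimthree} yields $\mu(3,6)\le 4$, $\mu(3,7)\le 5$ and $\mu(3,8)\le 6$, while $\mu(4,8)\le 6$ is Corollary~\ref{C3}.

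The delicate cells are $\mu(3,9)$ and $\mu(4,9)$, where Theorem~\ref{dimthree} gives only $\le 8$ and Theorem~\ref{upperboundallnd} only $\le 20$. For $\mu(3,9)$ I would revisit the layer-and-block bookkeeping in the proof of Theorem~\ref{dimthree}, tracking more carefully how many of the three distinguished variables of a farthest vertex can reach the outermost layers when only nine variables are available; the constraint that the second block must omit at least three such variables forces the two-block estimate down from $2n-10$ to $n-2=7$. For $\mu(4,9)$ I would combine this refinement with the intersection-of-facets analysis of Corollary~\ref{C3}: if two farthest vertices share a variable one reduces, via local connectedness, to a $d=3$ complex on at most eight variables and applies $\mu(3,8)=6$; otherwise connectivity produces a vertex adjacent to the first that meets the second, again reducing to the $d=3$, $n\le 8$ case, and the bookkeeping gives $\le 7$.

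The main obstacle is precisely this last paragraph: the uniform bounds of Section~\ref{upper} overshoot at $n=9$, so those two entries cannot be read off from a general formula and need the ad hoc refinement above --- which is why they, together with their realising complexes, are deferred to this appendix. For the row $n\ge 10$ the assertions are merely inequalities: $\mu(2,n)=n-2$ exactly (line-graph bound above), $\mu(3,n)\ge n-1$ from $\mu(3,8k+2)\ge 10k-1$ of Theorem~\ref{gluingd3} at $k=1$ together with the corollaries extending this to all $n\ge 10$, and $\mu(4,n)\ge n-2$ from $\mu(4,4k+4)\ge 6k$ of Theorem~\ref{gluingd4} and its corollary, so no further upper bounds are required there beyond the one settling $d=2$.
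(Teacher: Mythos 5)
Your overall strategy mirrors the paper's: Table~\ref{table:1} is established cell by cell by pairing an explicit locally connected construction (a lower bound) with a matching upper bound drawn from Theorem~\ref{dimthree}, Corollary~\ref{C3}, the Klee--Walkup reduction $\mu(d,d+k)=\mu(k,2k)$, and ad hoc arguments for the two entries where the general bounds overshoot. Indeed the paper's ``proof'' of this theorem is literally the sequence of propositions in Appendix~\ref{AppendixA}, each of which does exactly this, so there is no real divergence in plan.

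There is, however, a genuine gap in your sketch for $\mu(4,9)\le 7$. You assert that if the two extremal vertices $v_1=ABCD$ and $v_2=FGHI$ are disjoint, then ``connectivity produces a vertex adjacent to the first that meets the second.'' That step fails in $n=9$: a vertex adjacent to $ABCD$ shares three of $\{A,B,C,D\}$ and its fourth variable can be the ninth letter $E$, which lies outside $v_2$, so such a neighbor need not meet $FGHI$ at all. (This is precisely the difference from Corollary~\ref{C3}, where $n=8$ forces every neighbor of $v_1$ to meet $v_2$.) The paper's actual argument for $\mu(4,9)\le 7$ is a contradiction argument: assume diameter $\ge 8$, observe that then \emph{no} neighbor of $ABCD$ can contain $F,G,H$ or $I$ (else a $\mu(3,8)=6$ reduction would force diameter $\le 7$), conclude that every neighbor of $ABCD$, and symmetrically of $FGHI$, contains $E$, and then analyze the connected subgraph of vertices containing $E$ to force the diameter down to $5$ -- a contradiction. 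Your sketch would need to be replaced by something along these lines. Similarly, your outline for $\mu(3,9)\le 7$ is only a gesture toward a refinement of the block bookkeeping; the paper explicitly omits this proof for length and points to an external source, so a complete argument is nontrivial and is not supplied in either your text or the paper itself.

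One further remark: you invoke Theorem~\ref{dimthree} to conclude $\mu(3,6)\le 4$, which matches the printed table entry, but Appendix~\ref{AppendixA} proves the sharper $\mu(3,6)=3$ (via $\mu(2,5)=3$ and the observation that complementary $3$-sets in a $6$-set have every third vertex adjacent to one of them). So the table entry appears to be a typo; relying on the general bound of Theorem~\ref{dimthree} there reproduces the typo rather than the correct value.
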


This theorem is proved by the propositions of this appendix.

\begin{prop}\label{AP2}
$\mu (2,n) = n-2$.
\end{prop}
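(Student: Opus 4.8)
The plan is to recast the statement combinatorially. A dimension‑$2$, codimension‑$(n-2)$ Stanley-Reisner ring $R=S/I$ over $S=k[x_1,\dots,x_n]$ has a $1$‑dimensional complex $\Delta=\Delta_R$, that is, a simple graph on a subset of $\{1,\dots,n\}$ with no isolated vertex (equidimensionality of $R$ is purity of $\Delta$). Its facets are the edges of $\Delta$, and by the definition of the dual graph two facets are adjacent exactly when the corresponding edges of $\Delta$ share a vertex; hence $\bar{\mathcal G}(R)$ is the line graph $L(\Delta)$. By Theorem~\ref{locally}, $R$ is $(S_2)$ iff $L(\Delta)$ is locally connected, and this in particular forces $\Delta$ to be connected. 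So it suffices to exhibit a connected $\Delta$ on $n$ vertices that is $(S_2)$ with $\diam L(\Delta)=n-2$, and to show $\diam L(\Delta)\le n-2$ for every connected $\Delta$ on at most $n$ vertices.

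For the lower bound I would take $\Delta$ to be the path $1-2-\cdots-n$. Its line graph $L(\Delta)$ is again a path, on the $n-1$ edges of $\Delta$; hence it is connected, and for any nonempty set $T$ of variables the edges of $\Delta$ containing all of $T$ number at most two and are adjacent in $L(\Delta)$ when there are two, so $L(\Delta)$ is locally connected and $R$ is $(S_2)$ by Theorem~\ref{locally}. Since $L(\Delta)$ is a path on $n-1$ vertices, $\diam\bar{\mathcal G}(R)=n-2$, so $\mu(2,n)\ge n-2$.

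For the upper bound, let $\Delta$ be connected with at most $n$ vertices and let $e=\{a,b\}$ and $f=\{c,d\}$ be distinct edges. I would first record the elementary identity $d_{L(\Delta)}(e,f)=1+t$, where $t:=\min\{\,d_\Delta(x,y):x\in e,\ y\in f\,\}$: reading off the consecutive edges of a shortest path in $\Delta$ between the two closest endpoints, with $e$ prepended and $f$ appended, gives a path in $L(\Delta)$ of length $t+1$, and conversely, choosing for each step of an $L(\Delta)$‑path a vertex lying on both consecutive edges produces a walk in $\Delta$ from an endpoint of $e$ to an endpoint of $f$ that is one shorter than the path. Now say $t=d_\Delta(b,d)$; if $t=0$ then $e$ and $f$ meet, so $\Delta$ has at least three vertices and $d_{L(\Delta)}(e,f)=1\le n-2$, so assume $t\ge1$ and fix a geodesic $b=w_0,w_1,\dots,w_t=d$ in $\Delta$. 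Minimality of $t$ and the geodesic property force the $t+3$ vertices $w_0,\dots,w_t,a,c$ to be pairwise distinct: $a=w_j$ for some $j\ge1$ would give $d_\Delta(a,d)=t-j<t$, $a=w_0=b$ is impossible since $\{a,b\}$ is an edge, symmetrically $c\notin\{w_0,\dots,w_t\}$, and $a=c$ would force $t\le d_\Delta(a,c)=0$. Hence $t+3\le n$, so $d_{L(\Delta)}(e,f)=t+1\le n-2$; as $e,f$ were arbitrary, $\diam\bar{\mathcal G}(R)\le n-2$, and $\mu(2,n)=n-2$ follows.

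The only substantive point is the vertex count in the last step — equivalently, that a shortest path of length $L$ in $L(\Delta)$ witnesses at least $L+2$ distinct vertices of $\Delta$; everything else is the dictionary $\bar{\mathcal G}(R)=L(\Delta)$ and the fact that the path graph is extremal for it.
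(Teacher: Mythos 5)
Your proof is correct and in substance follows the paper's own argument: the paper also observes that for $d=2$ local connectedness reduces to connectedness, builds the path (or its line graph) as the extremal example for the lower bound, and bounds $\mu(2,n)$ above by noting that each step of a geodesic in $\bar{\mathcal G}(R)$ must introduce a fresh variable, which is exactly your count that a length-$L$ geodesic in $L(\Delta)$ witnesses $L+2$ distinct vertices of $\Delta$. The one cosmetic difference is that you phrase the upper bound via the line-graph distance formula $d_{L(\Delta)}(e,f)=1+\min\{d_\Delta(x,y):x\in e,\ y\in f\}$ rather than building the geodesic step by step, which makes the vertex count a little more transparent but is the same idea.
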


\begin{proof}
$R$ satisfies $(S_2)$ if and only if $\bar{\mathcal{G}} (R)$ is locally connected.  In the $d=2$ case, locally connected is equivalent to connected.  Thus we construct a connected graph.  Create a vertex $v_1 = x_1x_2$.  We wish to create another vertex $v_2$ adjacent to $v_1$.  Without loss of generality, $v_2 = x_1x_3$.  Any vertex not adjacent to $v_1$ but adjacent to $v_2$ must be of the form $x_3x_i$ $(i \neq 1,2,3)$.  Thus we may choose $v_3=x_3x_4$.  Continuing this process, we see that $\mu (2,n)$ is bounded above by the number of variables in $R$ that are not contained in $v_1$.  We also see that this construction yields a graph of diameter $n-2$.  Thus $\mu (2,n) = n-2$.  Figure \ref{a1} shows a graph with properties (i) and (ii) of diameter $3$ when $n=5, d=2$.
\end{proof}

\begin{figure}[h]
\begin {center}
\resizebox {8cm} {!} {
\begin {tikzpicture}[-latex ,auto ,node distance =1.35 cm and 1.75cm ,on grid ,
semithick ,
state/.style ={ circle ,top color =white , bottom color = processblue!20 ,
draw,processblue , text=blue , inner sep=0pt, minimum width = .8 cm}]
\node[state] (C)
{$BC$};
\node[state] (A) [left=of C] {$AB$};
\node[state] (B) [right =of C] {$CD$};
\node[state] (D) [right =of B] {$DE$};
\path[-] (A) edge [bend left =0] node[below =0.15 cm] {$$} (C);
\path[-] (C) edge [bend left =0] node[below =0.15 cm] {$$} (B);
\path[-] (B) edge [bend left =0] node[below =0.15 cm] {$$} (D);
\end{tikzpicture}
}

\caption{}
\label{a1}
\end{center}
\end{figure}

\begin{prop}
$\mu (3,6) =3$.
\end{prop}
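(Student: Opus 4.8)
The claim $\mu(3,6)=3$ splits into a construction giving $\mu(3,6)\ge 3$ and a diameter bound giving $\mu(3,6)\le 3$. For the lower bound I would exhibit an explicit pure $2$-dimensional complex $\Delta$ on $\{1,\dots,6\}$ whose facet-ridge graph $\bar{\mathcal{G}}(R)$ has diameter $3$. The boundary complex of the octahedron, with facets $\{a,b,c\}$ for $a\in\{1,4\}$, $b\in\{2,5\}$, $c\in\{3,6\}$, works: it is a simplicial $2$-sphere, hence Cohen--Macaulay, hence $(S_2)$, and its facet-ridge graph is the $1$-skeleton of the $3$-cube, of diameter $3$. An even simpler witness is the four-facet ``staircase'' $\{1,2,3\},\{1,2,4\},\{1,4,5\},\{4,5,6\}$, for which local connectedness --- equivalently $(S_2)$, by Theorem~\ref{locally} --- is immediate to verify by inspecting $\bar{\mathcal{G}}(R)_S$ for every subset $S$ of the variables.

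For the upper bound, note first that Theorem~\ref{dimthree} gives only $\mu(3,6)\le\max(2\cdot 6-10,\,6-2)=4$, so a finer argument is required. Let $R$ be an arbitrary equidimensional $(S_2)$ Stanley--Reisner ring with $d=3$ and $n=6$; by Theorem~\ref{locally}, $\bar{\mathcal{G}}(R)$ is connected and locally connected. Fix facets $u\ne v$ and argue by cases on $u\cap v$. If $u\cap v\ne\emptyset$, pick $a\in u\cap v$: the facets of $\Delta$ through $a$ are exactly the sets $\{a\}\cup F$ with $F$ a facet of $\operatorname{link}_\Delta(a)$, which is a pure $1$-dimensional complex on at most $5$ vertices, and local connectedness of $\Delta$ forces $\operatorname{link}_\Delta(a)$ to be connected, hence $(S_2)$. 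By the $d=2$ case $\mu(2,m)=m-2$ (Proposition~\ref{AP2}), any two of its facets are at distance at most $\mu(2,5)=3$, and the locally-connected path from $u$ to $v$ stays among facets containing $a$; hence $\dist(u,v)\le 3$.

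The case $u\cap v=\emptyset$ is where the real work lies, and I expect it to be the main obstacle: there is no common variable at which to localize, so instead one must exploit the rigidity of $3$-element subsets of a $6$-element set. Here $u\sqcup v=\{1,\dots,6\}$, so every facet $w$ satisfies $|w\cap u|+|w\cap v|=3$. Examining when two $3$-sets meet in a $2$-set shows that the neighbours of $u$ in $\bar{\mathcal{G}}(R)$ are precisely the facets with $|w\cap u|=2$, that the neighbours of $v$ are precisely those with $|w\cap v|=2$, and that along any edge of $\bar{\mathcal{G}}(R)$ the integer $|w\cap u|$ changes by at most $1$; in particular no neighbour of $u$ is a neighbour of $v$. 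Now take a shortest, hence simple, path $u=f_0,f_1,\dots,f_m=v$. Then $|f_1\cap u|=2$, $|f_{m-1}\cap u|=1$, and $|f_i\cap u|\in\{1,2\}$ for all $1\le i\le m-1$, so there is an index $k$ with $|f_{k-1}\cap u|=2$ and $|f_k\cap u|=1$; the latter says $|f_k\cap v|=2$. Thus $f_{k-1}$ is adjacent to $u$, $f_k$ is adjacent to $v$, and $f_{k-1}\sim f_k$, so $u\sim f_{k-1}\sim f_k\sim v$ and $\dist(u,v)\le 3$. In all cases $\diam\bar{\mathcal{G}}(R)\le 3$, giving $\mu(3,6)\le 3$, and combined with the construction above, $\mu(3,6)=3$.
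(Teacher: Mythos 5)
Your argument is correct and follows the same route as the paper's: split into cases by $|u\cap v|$, reduce the $|u\cap v|=1$ case to $\mu(2,5)=3$ via local connectedness (the paper phrases this directly through paths of facets containing $u\cap v$ rather than through links), and exploit $u\sqcup v=\{1,\dots,6\}$ when $u\cap v=\emptyset$. For the disjoint case the paper's treatment is a one-liner --- every facet other than $u,v$ is adjacent to $u$ or to $v$, so a shortest path can have at most two interior vertices --- and your bookkeeping with the quantity $|f_i\cap u|$ along a shortest path is simply a more explicit rendering of that same observation; your lower-bound witnesses (the octahedron or a staircase) also differ only cosmetically from the paper's, which appends a sixth variable $F$ to every label of the $\mu(2,5)$-extremal path.
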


\begin{proof}
Let us consider any distinct pair of vertices $v_1,v_2 \in \bar {\mathcal{G}} (R)$.  If $\deg (v_1 \cap v_2) = 2$ then $v_1,v_2$ are adjacent.  If $\deg (v_1 \cap v_2) = 1$, then local connectedness of the graph requires that there exists a path from $v_1$ to $v_2$ such that each vertex in the path contains $v_1 \cap v_2$.  Thus applying Proposition \ref{AP2}, the distance between these two vertices is bounded above by $\mu (2,5) = 3$.  If $\deg (v_1 \cap v_2) = 0$, then every vertex is either adjacent to $v_1$ or adjacent to $v_2$.  Therefore $\mu (3,6) \leq 3$.  Adding $F$ to every vertex label in Figure \ref{a1} produces a diameter-3 graph with properties (i) and (ii) with $d=3$, $n=6$.  Therefore $\mu (3,6) =3$.
\end{proof}

\begin{cor}
$\mu (d,d+3) = 3$.
\end{cor}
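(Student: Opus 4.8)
The plan is to obtain the upper bound $\mu(d,d+3)\le 3$ from the already-established value $\mu(3,6)=3$ via the localization reduction used repeatedly in Section \ref{upper} (this is precisely the scheme of the codimension-$4$ corollary after Corollary \ref{C3}), and to obtain the matching lower bound by adjoining $d-3$ common variables to the extremal $d=3,\ n=6$ graph — the ``coning'' trick already used after Corollary \ref{C3} and in Remark \ref{gluingremark}. As a one-line alternative for the upper bound, setting $n=d+3$ in the Klee--Walkup inequality $\mu(d,n)\le\mu(n-d,2(n-d))$ from \cite{KW67} gives $\mu(d,d+3)\le\mu(3,6)=3$ directly.

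For the self-contained upper bound, let $R$ be a codimension-$3$ Stanley-Reisner ring satisfying $(S_2)$, so $n=d+3$ and every vertex of $\bar{\mathcal{G}}(R)$ is a facet of size exactly $d$. Given $v_1,v_2\in V(\bar{\mathcal{G}}(R))$, we have $|v_1\cap v_2|\ge |v_1|+|v_2|-n = 2d-(d+3)=d-3$. When $d\ge 4$ this is positive, so fix $S\subseteq v_1\cap v_2$ with $|S|=d-3$. Since $R$ is $(S_2)$, Theorem \ref{locally} says $\bar{\mathcal{G}}(R)$ is locally connected, so $v_1$ and $v_2$ are joined by a path all of whose vertices contain $S$; by the proof of Theorem \ref{locally} this path lies in the induced subgraph $\bar{\mathcal{G}}(R)_S$, which is itself locally connected, has every vertex of size $d-|S|=3$, and uses at most $n-|S|=6$ variables. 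Hence, by Theorem \ref{locally} and its corollary, $\dist(v_1,v_2)\le\mu(3,6)=3$. The remaining cases are immediate: $d=3$ is the preceding proposition, and $d=2$ gives $\mu(2,5)=3$ by Proposition \ref{AP2}. Thus $\mu(d,d+3)\le 3$ for all $d\ge 2$.

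For the lower bound, take the diameter-$3$ graph with properties (i) and (ii) for $d=3,\ n=6$ produced in the preceding proposition (adjoin $F$ to every vertex of Figure \ref{a1}) and adjoin $d-3$ previously unused variables to each of its vertex labels. Exactly as in the constructions following Corollary \ref{C3}, this alters no adjacencies, and for any prescribed variable set $S'$ the induced subgraph on the vertices containing $S'$ is, after deleting the newly adjoined variables, an induced subgraph of the original graph of the same shape; so local connectedness — equivalently, by Theorem \ref{locally}, the $(S_2)$ property of the associated Stanley-Reisner ring — is preserved, while the diameter remains $3$. Therefore $\mu(d,d+3)\ge 3$, and combined with the previous paragraph $\mu(d,d+3)=3$.

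Once $\mu(3,6)=3$ is available this is essentially bookkeeping, so there is no serious obstacle. The one point requiring care is the degenerate possibility $v_1\cap v_2=\emptyset$ in the upper-bound step, which the estimate $|v_1\cap v_2|\ge d-3$ confines to $d\le 3$, where one appeals to the base cases rather than localizing; I would also spell out explicitly why coning preserves local connectedness, since that is the only place the combinatorial hypothesis could quietly fail.
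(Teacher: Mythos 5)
Your proof is correct and follows essentially the same route as the paper's: the upper bound comes from the localization/local-connectedness reduction to $\mu(3,6)$ via the $|v_1\cap v_2|\ge d-3$ estimate, and the lower bound from adjoining $d-3$ common variables to a diameter-$3$ path graph in the $d=3$, $n=6$ case. The only additions — the Klee--Walkup alternative and the explicit treatment of the $d\le 3$ edge cases — are sound but not substantively different from what the paper does.
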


\begin{proof}
Let $R$ be a codimension-$3$, $(S_2)$ Stanley-Reisner ring.  Take $v_1, v_2 \in V(\bar {\mathcal{G}} (R))$.  Then, $v_1, v_2$ will each contain $n-3$ variables, and $v_1 \cap v_2$ will contain at least $n-6$ variables.  Thus there must be a path from $v_1$ to $v_2$ in which each vertex in that path contains those $n-6$ shared variables.  Thus $\mu (d,d+3) \leq \mu (3,6)$.  Furthermore, we may take the graph with vertex set $\{ABC, BCD, CDE, DEF \}$ and add the same $d-3$ variables to each vertex to show $\mu (d,d+3) \geq 3$.
\end{proof}

\begin{prop}
$\mu (3,7) =5$.
\end{prop}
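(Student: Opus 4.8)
The plan is to prove the two inequalities $\mu(3,7)\le 5$ and $\mu(3,7)\ge 5$ separately. The upper bound is immediate from Theorem~\ref{dimthree}: since $\max(2\cdot 7-10,\,7-2)=\max(4,5)=5$, we get $\mu(3,7)\le 5$. For the lower bound I would exhibit one explicit graph $\mathcal G$ whose vertices are $3$-element subsets of $\{1,2,\dots,7\}$, such that $\mathcal G$ has properties (i) and (ii) of~\cite{EH10} and $\diam \mathcal G=5$. By the Corollary following Theorem~\ref{locally}, a graph with properties (i) and (ii) all of whose vertices have the same size is precisely $\bar{\mathcal G}(R)$ for some equidimensional $(S_2)$ Stanley--Reisner ring $R$; here $R$ has dimension $3$ and $n=7$ variables, so producing such a $\mathcal G$ yields $\mu(3,7)\ge 5$ and finishes the proof. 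As in Proposition~\ref{AP2} and the $d=3$, $n=6$ case, the graph is best presented as a figure.

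The verification of the properties of the exhibited $\mathcal G$ is then purely combinatorial. Property (ii) is automatic, since in $\mathcal G$ we join $u$ and $v$ by an edge exactly when $\lvert u\cap v\rvert=2$. Property (i) is equivalent to local connectedness of $\mathcal G$ (Theorem~\ref{locally}), and to check it one verifies, for each variable $x_i$ (and each relevant two-element set), that the subgraph of $\mathcal G$ induced on the vertices containing $x_i$ is connected; since a two-element set can only sit inside an adjacent pair, in practice it suffices to run through the seven single variables. Finally $\diam\mathcal G=5$ is confirmed by pointing to a pair of vertices at distance $5$ and checking, over the finite adjacency structure, that no pair is farther apart.

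The real content is finding a $\mathcal G$ that works, and I expect property (i) to be the obstacle: local connectedness and large diameter pull against each other, because any two vertices sharing two variables are forced to be adjacent, and the auxiliary vertices one is compelled to add in order to connect the ``layers through a common variable'' tend to create shortcuts that collapse the diameter below $5$. A useful guiding constraint is that the extremal pair must consist of disjoint vertices: the vertices of $\mathcal G$ containing a fixed variable $x$ correspond to $2$-subsets of the remaining $6$ variables and thus induce a subgraph of the Johnson graph on $2$-subsets of a $6$-set, which has no induced path of length exceeding $4$; hence if $u\cap v$ contained a variable $x$, local connectedness would supply a $u$--$v$ path through $x$-vertices of length at most $4$, contradicting $\dist(u,v)=5$. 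One must therefore arrange $u$ and $v$ disjoint while keeping every local-connectedness ``repair'' path long enough not to shorten $\dist(u,v)$; the explicit $\mathcal G$ is designed to meet both requirements at once.
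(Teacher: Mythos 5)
Your strategy matches the paper's exactly: Theorem~\ref{dimthree} gives $\mu(3,7)\le\max(2\cdot 7-10,\,7-2)=5$, and the lower bound is to be certified by an explicit $d=3$, $n=7$ graph with properties (i) and (ii) of diameter $5$. Your structural observation is also sound: a shortest path among the vertices containing a fixed variable $x$ is, after deleting $x$, an induced path in the Johnson graph $J(6,2)$ (the $x$-vertices form an induced subgraph of $J(6,2)$ by property (ii), and a shortest path in an induced subgraph is induced in the ambient graph), and such paths indeed have length at most $4$; equivalently, the $x$-subgraph is $\bar{\mathcal{G}}$ of the link of $x$, a $2$-dimensional $(S_2)$ complex on at most $6$ vertices, so its diameter is at most $\mu(2,6)=4$ by Proposition~\ref{AP2}. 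Hence the extremal pair must be disjoint, exactly as you say.

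The gap is that you never produce the graph, and as you yourself note, finding one is the real content of the lower bound. Without a concrete $\mathcal{G}$ and a verification of local connectedness, $\mu(3,7)\ge 5$ is unproved. The paper's Figure~\ref{a2} supplies the example: vertices $ABC$, $ABD$, $BCE$, $ADG$, $CEG$, $AEG$, $CDG$, $AEF$, $CDF$, $DEF$, with the disjoint pair $ABC$, $DEF$ at distance $5$ (consistent with your constraint), and Figure~\ref{a3} illustrates how local connectedness is checked variable by variable. You should exhibit such a graph and carry out that check to complete the argument.
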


\begin{proof}
Figure \ref{a2} is an example of a diameter-$5$ graph with properties (i) and (ii) with $d=3$, $n=7$.  Thus $\mu (3,7) \geq 5$.  In Theorem \ref{dimthree}, we proved $\mu (3,7) \leq 5$.
\end{proof}

\begin{figure} [h]
\begin {center}
\begin {tikzpicture}[-latex ,auto ,node distance =1.75 cm and 1.55cm ,on grid ,
semithick ,
state/.style ={ circle ,top color =white , bottom color = processblue!20 ,
draw,processblue , text=blue , inner sep=0pt, minimum width =.4 cm}]
\node[state] (C)
{$CDG$};
\node[state] (A) [above=of C] {$AEG$};
\node[state] (D) [left =of C] {$CEG$};
\node[state] (F) [left =of A] {$ADG$};
\node[state] (G) [left =of F] {$ABD$};
\node[state] (I) [left =of D] {$BCE$};
\node[state] (J) [left =of G] {$ABC$};
\node[state] (K) [right =of A] {$AEF$};
\node[state] (L) [right =of C] {$CDF$};
\node[state] (M) [right =of K] {$DEF$};

\path [-] (D) edge [bend left =0] node[below =0.15 cm] {$$} (C);
\path[-] (F) edge [bend left =0] node[below =0.15 cm] {$$} (A);
\path[-] (F) edge [bend left =0] node[below =0.15 cm] {$$} (C);
\path[-] (K) edge [bend left =0] node[below =0.15 cm] {$$} (A);
\path [-] (L) edge [bend left =0] node[below =0.15 cm] {$$} (M);
\path[-] (K) edge [bend left =0] node[below =0.15 cm] {$$} (M);
\path[-] (C) edge [bend left =0] node[below =0.15 cm] {$$} (L);
\path[-] (I) edge [bend left =0] node[below =0.15 cm] {$$} (D);
\path[-] (A) edge [bend left =0] node[below =0.15 cm] {$$} (D);
\path[-] (G) edge [bend left =0] node[below =0.15 cm] {$$} (J);
\path[-] (I) edge [bend left =0] node[below =0.15 cm] {$$} (J);
\path[-] (G) edge [bend left =0] node[below =0.15 cm] {$$} (F);

\end{tikzpicture}

\caption{}
\label{a2}
\end{center}
\end{figure}

To see that $\bar{\mathcal{G}} (R)$ is locally connected, we take any subset of variables $S$ and check that the vertices containing $S$ form a connected subgraph.  Below, we color the variables containing $E$ red.  
\begin{figure}[h]
\begin {center}
\begin {tikzpicture}[-latex ,auto ,node distance =1.75 cm and 1.55cm ,on grid ,
semithick ,
state/.style ={ circle ,top color =white , bottom color = processblue!20 ,
draw,processblue , text=blue , inner sep=0pt, minimum width =.4 cm}]
\node[state] (C)
{$CDG$};
\node[{ circle ,top color =red , bottom color = red!40 ,
draw,processblue , text=blue , inner sep=0pt, minimum width =.4 cm}] (A) [above=of C] {$AEG$};
\node[{ circle ,top color =red , bottom color = red!40 ,
draw,processblue , text=blue , inner sep=0pt, minimum width =.4 cm}] (D) [left =of C] {$CEG$};
\node[state] (F) [left =of A] {$ADG$};
\node[state] (G) [left =of F] {$ABD$};
\node[{ circle ,top color =red , bottom color = red!40 ,
draw,processblue , text=blue , inner sep=0pt, minimum width =.4 cm}] (I) [left =of D] {$BCE$};
\node[state] (J) [left =of G] {$ABC$};
\node[{ circle ,top color =red , bottom color = red!40 ,
draw,processblue , text=blue , inner sep=0pt, minimum width =.4 cm}] (K) [right =of A] {$AEF$};
\node[state] (L) [right =of C] {$CDF$};
\node[{ circle ,top color =red , bottom color = red!40 ,
draw,processblue , text=blue , inner sep=0pt, minimum width =.4 cm}] (M) [right =of K] {$DEF$};

\path [-] (D) edge [bend left =0] node[below =0.15 cm] {$$} (C);
\path[-] (F) edge [bend left =0] node[below =0.15 cm] {$$} (A);
\path[-] (F) edge [bend left =0] node[below =0.15 cm] {$$} (C);
\path[-] (K) edge [bend left =0] node[below =0.15 cm] {$$} (A);
\path [-] (L) edge [bend left =0] node[below =0.15 cm] {$$} (M);
\path[-] (K) edge [bend left =0] node[below =0.15 cm] {$$} (M);
\path[-] (C) edge [bend left =0] node[below =0.15 cm] {$$} (L);
\path[-] (I) edge [bend left =0] node[below =0.15 cm] {$$} (D);
\path[-] (A) edge [bend left =0] node[below =0.15 cm] {$$} (D);
\path[-] (G) edge [bend left =0] node[below =0.15 cm] {$$} (J);
\path[-] (I) edge [bend left =0] node[below =0.15 cm] {$$} (J);
\path[-] (G) edge [bend left =0] node[below =0.15 cm] {$$} (F);

\end{tikzpicture}

\caption{}
\label{a3}
\end{center}
\end{figure}

\begin{prop}
$\mu (3,8) =6$.
\end{prop}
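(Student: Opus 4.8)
The plan is to prove $\mu(3,8) = 6$ by establishing the two inequalities separately. The upper bound $\mu(3,8) \leq 6$ is an immediate consequence of Theorem~\ref{dimthree}: substituting $n = 8$ gives $\mu(3,8) \leq \max(2\cdot 8 - 10,\, 8 - 2) = \max(6,6) = 6$. So the content lies entirely in the matching lower bound $\mu(3,8) \geq 6$, which I would obtain by exhibiting an explicit graph.

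For the lower bound I would invoke the corollary to Theorem~\ref{locally}: a graph whose vertex labels all have the same size and which has properties (i) and (ii) from~\cite{EH10} is precisely $\bar{\mathcal{G}}(R)$ for an equidimensional $(S_2)$ Stanley-Reisner ring $R$ -- here of dimension $3$ and codimension $4$. So it suffices to build a graph $G$ whose vertices are products of three of the variables $A,B,\dots,H$, with an edge $(v,w)$ exactly when $\degree(v\cap w) = 2$ (property (ii)), which is locally connected (property (i)), and which has two vertices at distance $6$. As in the $n = 7$ case (Figure~\ref{a2}), I would place two disjoint "opposite" vertices, say $ABC$ and $DEF$, and thread the remaining vertices into a corridor of length $6$ between them, exploiting the two auxiliary variables $G,H$ to stretch the corridor one step longer than the $n = 7$ construction while keeping each variable's neighborhood connected. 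The resulting graph would be displayed as a figure in the style of Figures~\ref{a1}--\ref{a3}.

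To verify local connectedness one uses that each vertex label has only three letters: it then suffices to check that $G$ is connected, that for every single variable $x$ the induced subgraph on the vertices containing $x$ is connected, and that for every pair $\{x,y\}$ the induced subgraph on the vertices containing both is connected -- any subset with three or more variables determines at most one vertex. The pair-subgraphs are small and typically connected on inspection, so the real work is the eight single-variable checks, which I would carry out one at a time, e.g.\ by color-coding as in Figure~\ref{a3}. The diameter claim is then a finite shortest-path computation on the displayed graph, confirming that no path from $ABC$ to $DEF$ is shorter than $6$.

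The main obstacle is not any individual argument but the construction itself: one must simultaneously force diameter $6$ and preserve connectedness of all eight single-variable neighborhoods, a constraint-satisfaction problem with essentially no slack, since the upper bound $\mu(3,8)\le 6$ from Theorem~\ref{dimthree} is sharp. Once a valid graph has been written down, both the diameter computation and the local-connectedness verification are routine finite checks.
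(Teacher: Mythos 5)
Your outline matches the paper's route exactly: the upper bound $\mu(3,8)\le 6$ comes from Theorem~\ref{dimthree} as you state, and the lower bound must be supplied by an explicit locally connected facet--ridge graph of diameter~$6$. The genuine gap is that you never actually exhibit such a graph. For this proposition the construction \emph{is} the proof of the lower bound --- a description of what the graph should look like, together with an acknowledgment that the constraint-satisfaction has ``essentially no slack,'' is not a verification that the constraints can in fact be met. You identify this yourself as the main obstacle, but then stop short of clearing it.

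For comparison, the paper's construction (Figure~\ref{a4}) is actually simpler than the ``corridor-threading'' you describe: it is precisely the $n=7$ graph of Figure~\ref{a2} with one new vertex $DEH$ appended, adjacent only to $DEF$. The distance-$6$ pair is then $ABC$ and $DEH$, not $ABC$ and $DEF$ as in your sketch ($ABC$ and $DEF$ remain at distance~$5$). Local connectedness is preserved almost for free: the new variable $H$ occurs in only one vertex, so its neighborhood is trivially connected, and for each old variable $x\in\{D,E\}$ the new vertex $DEH$ is adjacent to the existing vertex $DEF$, so the $x$-neighborhood stays connected; no other variable's neighborhood changes. If you had pursued your own plan far enough to write down a candidate graph, this kind of incremental check is what would have to be carried out --- and it is exactly the kind of detail that cannot be waved through, since a single disconnected single-variable neighborhood would invalidate the example.
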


\begin{proof}
Figure \ref{a4} is an example of a diameter-$6$ graph with properties (i) and (ii) with $d=3$, $n=8$.  Thus $\mu (3,8) \geq 6$.  In Theorem \ref{dimthree}, we proved $\mu (3,8) \leq 6$.
\end{proof}


\begin{figure}[h]
\begin {center}
\begin {tikzpicture}[-latex ,auto ,node distance =1.75 cm and 1.55cm ,on grid ,
semithick ,
state/.style ={ circle ,top color =white , bottom color = processblue!20 ,
draw,processblue , text=blue , inner sep=0pt, minimum width =.4 cm}]
\node[state] (C)
{$CDG$};
\node[state] (A) [above=of C] {$AEG$};
\node[state] (D) [left =of C] {$CEG$};
\node[state] (F) [left =of A] {$ADG$};
\node[state] (G) [left =of F] {$ABD$};
\node[state] (I) [left =of D] {$BCE$};
\node[state] (J) [left =of G] {$ABC$};
\node[state] (K) [right =of A] {$AEF$};
\node[state] (L) [right =of C] {$CDF$};
\node[state] (M) [right =of K] {$DEF$};
\node[state] (N) [right =of M] {$DEH$};

\path [-] (D) edge [bend left =0] node[below =0.15 cm] {$$} (C);
\path[-] (F) edge [bend left =0] node[below =0.15 cm] {$$} (A);
\path[-] (F) edge [bend left =0] node[below =0.15 cm] {$$} (C);
\path[-] (K) edge [bend left =0] node[below =0.15 cm] {$$} (A);
\path [-] (L) edge [bend left =0] node[below =0.15 cm] {$$} (M);
\path[-] (K) edge [bend left =0] node[below =0.15 cm] {$$} (M);
\path[-] (C) edge [bend left =0] node[below =0.15 cm] {$$} (L);
\path[-] (I) edge [bend left =0] node[below =0.15 cm] {$$} (D);
\path[-] (A) edge [bend left =0] node[below =0.15 cm] {$$} (D);
\path[-] (G) edge [bend left =0] node[below =0.15 cm] {$$} (J);
\path[-] (I) edge [bend left =0] node[below =0.15 cm] {$$} (J);
\path[-] (G) edge [bend left =0] node[below =0.15 cm] {$$} (F);
\path[-] (M) edge [bend left =0] node[below =0.15 cm] {$$} (N);

\end{tikzpicture}

\caption{}
\label{a4}
\end{center}
\end{figure}

\begin{prop}\label{AP6}
$\mu (3,9) =7$.
\end{prop}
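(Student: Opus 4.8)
The assertion is an equality, so I would prove $\mu(3,9)\ge 7$ and $\mu(3,9)\le 7$ separately. For the lower bound the plan is to exhibit an explicit $2$-dimensional $(S_2)$ complex on $9$ vertices whose facet--ridge graph has diameter $7$: take the diameter-$6$ complex of Figure \ref{a4} (the $d=3$, $n=8$ example, whose farthest vertex from $ABC$ is $DEH$) and adjoin the new facet $\{D,H,I\}$ along the ridge $\{D,H\}$, where $I$ is a new variable. Since $\{D,H\}$ spans a pure $1$-dimensional subcomplex --- which is $(S_1)$ and has dimension $d-2=1$ --- Theorem \ref{gluing} shows the union is again $(S_2)$, so by Theorem \ref{locally} its facet--ridge graph is locally connected and therefore has properties (i) and (ii). The new vertex $DHI$ has $DEH$ as its only neighbour, so it lies at distance $7$ from $ABC$ and, since every old pair was within distance $6$, no pair is farther; hence $\mu(3,9)\ge 7$.

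For the upper bound, note that Theorem \ref{dimthree} gives only $\mu(3,9)\le 2\cdot 9-10=8$, so a finer argument is needed. Let $R$ be an $(S_2)$ Stanley--Reisner ring with $d=3$, $n=9$, and let $v_1,v_2\in V(\bar{\mathcal{G}}(R))$ realize the diameter $\delta$. If $v_1\cap v_2\neq\emptyset$, local connectedness (Theorem \ref{locally}) yields a path from $v_1$ to $v_2$ all of whose vertices contain the common variable; such a path lies in the facet--ridge graph of a connected (hence $(S_2)$) $1$-dimensional complex on at most $8$ variables, so by Proposition \ref{AP2} it has length at most $\mu(2,8)=6$, exactly as in Corollary \ref{C3}. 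Thus we may assume $v_1=ABC$ and $v_2=DEF$ are disjoint; let $G,H,I$ be the remaining variables and suppose, for contradiction, that $\delta\ge 8$.

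The same localization argument shows that every vertex meeting $\{A,B,C\}$ lies within distance $6$ of $v_1$, and every vertex meeting $\{D,E,F\}$ lies within distance $6$ of $v_2$, hence at distance $\ge\delta-6\ge 2$ from $v_1$. Therefore the neighbours of $v_1$ --- which form $L_1$ --- meet none of $D,E,F$: there is no vertex consisting of two elements of $\{A,B,C\}$ and one of $\{D,E,F\}$, and symmetrically none with two of $\{D,E,F\}$ and one of $\{A,B,C\}$. So every vertex is $ABC$, $DEF$, $GHI$, or is of one of the types (two of a single block plus one of $\{G,H,I\}$), (one of a single block plus two of $\{G,H,I\}$), or (one from each of the three blocks). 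I would then rerun the layer-and-block decomposition of the proof of Theorem \ref{dimthree} under this restriction: in each extremal configuration the subpath through $D$-containing vertices that joins the first block to $DEF$ cannot have length $6$, because a length-$6$ strictly increasing such subpath would require a full complement of $2$-element partners for $D$, and the corresponding triples are either forbidden (two of one block plus one of the other) or adjacent to $DEF$ and hence forced into $L_{\delta-1}$ rather than the intermediate layer; bounding each such subpath by $5$ and summing the block contributions as in Theorem \ref{dimthree} gives $\delta\le 7$, contradicting $\delta\ge 8$. Hence $\mu(3,9)\le 7$, and with the construction $\mu(3,9)=7$. The main obstacle is exactly this last refinement --- verifying, configuration by configuration, that $n-d=6$ is genuinely too small for the three variables $G,H,I$ to supply all the $2$-subsets a length-$6$ subpath of $D$-vertices would demand; the bookkeeping up to that point is routine.
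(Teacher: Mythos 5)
Your lower bound construction is correct and essentially the same as the paper's: append a single new facet to the diameter-$6$, $n=8$ example of Figure~\ref{a4} so that the new vertex hangs off $DEH$ and lies at distance~$7$ from $ABC$. The paper uses $EHI$; you use $DHI$. Both are adjacent only to $DEH$, and one can check in either case that the $A$-subgraph, $D$-subgraph, $H$-subgraph, etc.\ all remain connected. Your additional step of invoking Theorem~\ref{gluing} (gluing a simplex along the ridge $\{D,H\}$, a pure $(d-2)$-dimensional $(S_1)$ subcomplex) is a clean way of certifying $(S_2)$ rather than checking local connectedness by hand; the paper just asserts the construction works.

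The upper bound, however, is where the substance lies, and there you have a genuine gap. You correctly observe that Theorem~\ref{dimthree} only gives $\mu(3,9)\le 8$, correctly reduce via $\mu(2,8)=6$ to the case $v_1\cap v_2=\emptyset$, and correctly deduce that no vertex can have two variables from one of $\{A,B,C\}$, $\{D,E,F\}$ and one from the other (since such a vertex would be adjacent to $v_1$ and within distance $6$ of $v_2$, forcing $\delta\le 7$). That classification of vertex types is valid. But the step that actually yields $\delta\le 7$ --- the ``rerun the layer-and-block decomposition'' and ``bound each subpath by $5$'' --- is stated as a plan, not carried out, and you explicitly flag it as the obstacle. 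Without that case analysis, the inequality $\mu(3,9)\le 7$ is not established; Theorem~\ref{dimthree} alone leaves open the possibility of diameter~$8$. For what it's worth, the paper itself also omits this case analysis, stating only that the proof is too long for inclusion and referring to an external document. So what you have is a correct setup with the same missing core that the published text elides --- but as a standalone proof it does not close the upper bound.
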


\begin{proof}
We can prove that there does not exist $\bar{\mathcal{G}} (R)$, such that $R$ is an $(S_2)$ Stanley-Reisner ring, $n=9$, $d=3$, and $\diam(\bar {\mathcal{G}} (R)) > 7$.  Due to length, this proof has been omitted from the paper but can be found at: \url{http://www.math.ku.edu/~b101h187/}.   The graph in Figure \ref{a4} with the vertex $EHI$ appended is a diameter-7 graph with properties (i) and (ii) when $n=9,d=3$.
\end{proof}

Proposition \ref{AP6} gives a bound only one better than the general upper bound given in Theorem \ref{dimthree}.

\begin{prop}\label{AP8}
$\mu (3,10) \geq 9$.
\end{prop}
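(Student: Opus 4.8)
The plan is to prove this lower bound exactly as the preceding propositions do, namely by exhibiting an explicit graph. Concretely, I would produce a graph $G_1$ (to be displayed as Figure~\ref{a5}) whose vertices are the $3$-element subsets of a fixed $10$-element alphabet $\{A,B,\dots,J\}$ appearing in the picture, with two vertices joined by an edge precisely when their intersection has cardinality $2$. With this rule for the edge set the graph automatically has property (ii), so by the corollary to Theorem~\ref{locally} it is $\bar{\mathcal{G}}(R)$ for an equidimensional $(S_2)$ Stanley--Reisner ring $R$ of dimension $3$ and codimension $7$ as soon as property (i) is verified. The graph is designed as a ``thickened path'': one fixes a sequence of triples realizing distance $9$ between two chosen endpoint vertices, and then adds auxiliary vertices bridging consecutive layers so that local connectedness holds, taking care that no auxiliary vertex creates a shortcut shortening the endpoint distance below $9$. (Equivalently, this is the $k=1$ instance of the construction in Theorem~\ref{gluingd3}, which is where $G_1$ is actually needed.)

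Second, I would verify property (i), i.e.\ local connectedness. Because $d=3$, it suffices to check, for every single variable $x$ and for every pair of variables $\{x,y\}$, that the induced subgraph on the vertices whose label contains that set is connected; subsets of size $\ge 3$ give at most one vertex and are vacuous, and the empty subset gives the whole graph, which is connected by inspection. This is a finite verification, which I would carry out, displaying the representative cases using colored vertices in the style of Figure~\ref{a3}.

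Third, I would pin down the diameter. I would name the two endpoint vertices $u$ and $v$ and argue $\dist(u,v)=9$: along any path each step changes exactly one of the three coordinates of the current triple, and the combinatorics of which triples are actually present as vertices forces at least $9$ such steps to pass from $u$ to $v$ --- the same phenomenon, applied locally, that makes the $d=2$ path in Proposition~\ref{AP2} have diameter $n-2$. Hence $\diam G_1 \ge 9$, which gives $\mu(3,10)\ge 9$.

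The main obstacle is the simultaneous bookkeeping in the first two steps: one must choose the auxiliary vertices so that \emph{every} relevant induced subgraph is connected while \emph{no} added edge provides a detour collapsing the endpoint distance. Once a correct $G_1$ is written down, the rest --- checking property (ii), checking each induced subgraph, and reading off the length-$9$ geodesic --- is purely mechanical, and no further algebra is required since properties (i) and (ii) already encode the $(S_2)$ condition by the corollary to Theorem~\ref{locally}.
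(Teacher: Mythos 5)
Your proposal matches the paper's proof exactly: the paper simply exhibits the explicit graph $G_1$ of Figure~\ref{a5} and asserts it has properties (i), (ii) and diameter $9$, which is precisely the certificate you describe constructing and verifying. The additional remarks you make (that for $d=3$ local connectedness reduces to checking singletons and pairs, and that $G_1$ is the $k=1$ building block reused in Theorem~\ref{gluingd3}) are correct and consistent with the paper's usage.
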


\begin{proof}
Figure \ref{a5} is an example of a diameter-$9$ graph with properties (i) and (ii) with $d=3$, $n=10$.
\end{proof}

Proposition \ref{AP8} gives a bound only one better than the bound of Theorem \ref{dimthree}.

\begin{figure} [h]
\begin {center}
\begin {tikzpicture}[-latex ,auto ,node distance =1.75 cm and 1.55cm ,on grid ,
semithick ,
state/.style ={ circle ,top color =white , bottom color = processblue!20 ,
draw,processblue , text=blue , inner sep=0pt, minimum width =.4 cm}]
\node[state] (C)
{$AEI$};
\node[state] (A) [above=of C] {$BEJ$};
\node[state] (B) [below =of C] {$CEH$};
\node[state] (D) [left =of C] {$AEH$};
\node[state] (E) [left =of B] {$CEJ$};
\node[state] (F) [left =of A] {$BEI$};
\node[state] (G) [left =of F] {$BDI$};
\node[state] (H) [left =of E] {$CDJ$};
\node[state] (I) [left =of D] {$ADH$};
\node[state] (J) [left =of G] {$BCD$};
\node[state] (K) [left =of I] {$ACD$};
\node[state] (L) [above left =of K] {$ABC$};
\node[state] (M) [right =of C] {$AFI$};
\node[state] (N) [right =of B] {$CFH$};
\node[state] (O) [right =of A] {$BFJ$};
\node[state] (P) [right =of O] {$BFH$};
\node[state] (Q) [right =of N] {$CFI$};
\node[state] (R) [right =of M] {$AFJ$};
\node[state] (S) [right =of P] {$BGH$};
\node[state] (T) [right =of Q] {$CGI$};
\node[state] (U) [right =of R] {$AGJ$};
\node[state] (V) [right =of S] {$GHJ$};
\node[state] (W) [right =of U] {$GHI$};
\node[state] (X) [above right =of W] {$HIJ$};

\path [-] (J) edge [bend left =0] node[below =0.15 cm] {$$} (K);
\path [-] (L) edge [bend left =0] node[below =0.15 cm] {$$} (K);
\path[-] (J) edge [bend left =0] node[below =0.15 cm] {$$} (L);
\path[-] (J) edge [bend left =0] node[below =0.15 cm] {$$} (G);
\path[-] (K) edge [bend left =0] node[below =0.15 cm] {$$} (H);
\path[-] (J) edge [bend left =0] node[below =0.15 cm] {$$} (H);
\path[-] (K) edge [bend left =0] node[below =0.15 cm] {$$} (I);
\path[-] (G) edge [bend left =0] node[below =0.15 cm] {$$} (F);
\path[-] (H) edge [bend left =0] node[below =0.15 cm] {$$} (E);
\path[-] (I) edge [bend left =0] node[below =0.15 cm] {$$} (D);
\path [-] (D) edge [bend left =0] node[below =0.15 cm] {$$} (C);
\path[-] (E) edge [bend left =0] node[below =0.15 cm] {$$} (B);
\path[-] (F) edge [bend left =0] node[below =0.15 cm] {$$} (A);
\path[-] (A) edge [bend left =0] node[below =0.15 cm] {$$} (O);
\path[-] (C) edge [bend left =0] node[below =0.15 cm] {$$} (M);
\path[-] (B) edge [bend left =0] node[below =0.15 cm] {$$} (N);
\path[-] (D) edge [bend left =0] node[below =0.15 cm] {$$} (B);
\path[-] (E) edge [bend left =0] node[below =0.15 cm] {$$} (A);
\path[-] (F) edge [bend left =0] node[below =0.15 cm] {$$} (C);
\path[-] (P) edge [bend left =0] node[below =0.15 cm] {$$} (O);
\path[-] (R) edge [bend left =0] node[below =0.15 cm] {$$} (M);
\path[-] (Q) edge [bend left =0] node[below =0.15 cm] {$$} (N);
\path[-] (Q) edge [bend left =0] node[below =0.15 cm] {$$} (M);
\path[-] (P) edge [bend left =0] node[below =0.15 cm] {$$} (N);
\path[-] (R) edge [bend left =0] node[below =0.15 cm] {$$} (O);
\path[-] (P) edge [bend left =0] node[below =0.15 cm] {$$} (S);
\path[-] (R) edge [bend left =0] node[below =0.15 cm] {$$} (U);
\path[-] (Q) edge [bend left =0] node[below =0.15 cm] {$$} (T);
\path[-] (S) edge [bend left =0] node[below =0.15 cm] {$$} (W);
\path[-] (U) edge [bend left =0] node[below =0.15 cm] {$$} (V);
\path[-] (T) edge [bend left =0] node[below =0.15 cm] {$$} (W);
\path[-] (S) edge [bend left =0] node[below =0.15 cm] {$$} (V);
\path[-] (X) edge [bend left =0] node[below =0.15 cm] {$$} (W);
\path[-] (X) edge [bend left =0] node[below =0.15 cm] {$$} (V);
\path[-] (W) edge [bend left =0] node[below =0.15 cm] {$$} (V);

\end{tikzpicture}

\caption{}
\label{a5}
\end{center}
\end{figure}

\begin{prop}
$\mu (3,n) \geq n-d+2$ for all $n \geq 10$.
\end{prop}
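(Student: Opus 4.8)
Since $d=3$, the assertion is $\mu(3,n)\ge n-1$ for all $n\ge 10$. The plan is to bootstrap from the diameter-$9$ example of Proposition~\ref{AP8}: Figure~\ref{a5} exhibits $\bar{\mathcal{G}}(R)$ for an $(S_2)$ Stanley--Reisner ring with $n=10$, $d=3$, and $\dist(ABC,HIJ)=9=n-d+2$, which settles the base case. Before iterating I would record one elementary observation about that graph: the two variables $I,J$ occur together in the single vertex $HIJ$ only; that is, $\{I,J\}$ is a \emph{free ridge} of the facet $HIJ$ (the edge $\{I,J\}$ lies in exactly one facet of the underlying complex).

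For $n>10$ I would build $\bar{\mathcal{G}}(R_n)$ inductively. Suppose $\bar{\mathcal{G}}(R_m)$ has been constructed ($m\ge 10$), that it is $\bar{\mathcal{G}}$ of an $(S_2)$ complex on $m$ vertices with $d=3$, and that it has a vertex $v$ with $\dist(ABC,v)=m-1$ possessing a free ridge $e=\{x_a,x_b\}$. Form $\bar{\mathcal{G}}(R_{m+1})$ by adjoining a single new vertex $w=\{x_a,x_b,x_{m+1}\}$, where $x_{m+1}$ is a fresh variable, joined to $v$; on the level of complexes this attaches the simplex on $\{x_a,x_b,x_{m+1}\}$ to the old complex along the face $\{x_a,x_b\}$. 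Since $x_{m+1}$ lies in no other vertex, for any old vertex $u\ne v$ we have $|w\cap u|=|\{x_a,x_b\}\cap u|\le 1$ because $e$ is free; hence $w$ is a pendant, adjacent to $v$ alone. Moreover $\{x_b,x_{m+1}\}$ now occurs only in $w$, so it is a free ridge of $w$, which furnishes the inductive hypothesis for the next step. Starting from Figure~\ref{a5} with $v=HIJ$ and $e=\{I,J\}$ and iterating $n-10$ times produces $\bar{\mathcal{G}}(R_n)$ on $n$ variables with $d=3$.

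Two properties must then be checked. First, the diameter: each attachment is a pendant, so distances among the previously present vertices are unchanged and the new end vertex $w$ satisfies $\dist(ABC,w)=\dist(ABC,v)+1$; after $n-10$ steps the final end vertex lies at distance $9+(n-10)=n-1$ from $ABC$, so $\diam\bar{\mathcal{G}}(R_n)\ge n-1=n-d+2$. Second, local connectedness must persist, equivalently $R_n$ must satisfy $(S_2)$. This follows from Theorem~\ref{gluing}: we are gluing two $(S_2)$ complexes (a simplex is Cohen--Macaulay, hence $(S_\ell)$ for every $\ell$) along a pure subcomplex of dimension $d-2=1$, namely the simplex on $\{x_a,x_b\}$, whose Stanley--Reisner ring is a polynomial ring and so satisfies $(S_1)=(S_{2-1})$; therefore the union is $(S_2)$. (Alternatively one checks directly that attaching a pendant carrying a brand-new variable cannot disconnect any induced subgraph $\bar{\mathcal{G}}(R_n)_S$: whenever such a $w$ lies in $\bar{\mathcal{G}}(R_n)_S$ it is joined there to $v$, which then also lies in the subgraph.) Combining with Proposition~\ref{AP8} yields $\mu(3,n)\ge n-d+2$ for all $n\ge 10$.

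The only delicate point, which I would flag as the main obstacle, is the bookkeeping ensuring that every attached facet is a genuine pendant: this is exactly the role of the free-ridge condition, and the crux is to confirm that it holds for $HIJ$ in Figure~\ref{a5} and that it is self-propagating once a fresh variable is introduced. Granting that, the diameter count is immediate and the $(S_2)$ claim is a one-line application of Theorem~\ref{gluing}.
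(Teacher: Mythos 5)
Your proof is correct and is essentially the same construction as the paper's: you append to the $n=10$ graph of Figure~\ref{a5} a chain of pendant facets $IJx_1, Jx_1x_2, x_1x_2x_3,\dots$, each introducing one fresh variable, and observe that this raises the diameter by one per step while preserving $(S_2)$ (the paper simply lists these vertices without spelling out the free-ridge bookkeeping or the appeal to Theorem~\ref{gluing}, both of which you make explicit).
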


\begin{proof}
We can construct an example of a diameter-$(n-d+2)$ graph with properties (i) and (ii) with $d=3$, $n=10+j$ by taking the graph in Figure \ref{a5} and appending the vertices: \[ IJx_{1}, Jx_1x_2, x_1x_2x_3, \cdots ,x_{j-2}x_{j-1}x_{j}. \]
\end{proof}

Buchsbaum complexes have long been studied in combinatorial algebra \cite{HT96, Te96, HK01, TY06}.  It is of interest that all of the complexes we have examined thus far are Buchsbaum.  The following is likely known to experts, but we include a proof here.

\begin{prop}
Let $R=S/I$ be an equidimensional Stanley-Reisner ring of dimension $3$.  Then $R$ is connected and Buchsbaum if and only if $R$ satisfies $(S_2)$.
\end{prop}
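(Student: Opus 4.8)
The plan is to set the combinatorial characterization of the Buchsbaum property against the cohomological description of $(S_2)$ already recorded in Lemma \ref{gluelemma}, and to observe that in dimension $3$ the two conditions differ exactly by connectedness of $\Delta$. Throughout, note that $R$ is reduced and that the equidimensionality hypothesis---which is in any case automatic once $R$ is $(S_2)$ or Buchsbaum---guarantees $\Delta$ is pure.

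First I would unwind $(S_2)$. By Lemma \ref{gluelemma} with $\ell=2$ and $d=3$, the ring $R$ satisfies $(S_2)$ if and only if $\dim\Ext_S^{n-i}(R,\omega_S)\le i-2$ for $i=0,1,2$; that is, $\Ext_S^{n}(R,\omega_S)=\Ext_S^{n-1}(R,\omega_S)=0$ and $\dim\Ext_S^{n-2}(R,\omega_S)\le 0$. By graded local duality these three clauses say precisely that $H^0_{\mathfrak m}(R)=H^1_{\mathfrak m}(R)=0$ and $H^2_{\mathfrak m}(R)$ has finite length, where $\mathfrak m$ is the graded maximal ideal of $R$. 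Next I would unwind Buchsbaumness: by Schenzel's criterion, a Stanley--Reisner ring $k[\Delta]$ of dimension $d$ is Buchsbaum if and only if $\widetilde H^{j}(\operatorname{lk}_\Delta\sigma;k)=0$ for every nonempty face $\sigma$ and every $j<\dim\operatorname{lk}_\Delta\sigma$, and---via Hochster's formula for the $\mathbb{Z}^n$-graded local cohomology of $k[\Delta]$---this is equivalent to $H^i_{\mathfrak m}(R)$ having finite length for all $i<d$. For $d=3$ this last condition reads: $H^0_{\mathfrak m}(R)$, $H^1_{\mathfrak m}(R)$, $H^2_{\mathfrak m}(R)$ all of finite length; and since $R$ is reduced of positive dimension, $H^0_{\mathfrak m}(R)=0$ automatically.

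Comparing the two descriptions, the equivalence becomes routine. If $R$ is $(S_2)$ then in particular $H^1_{\mathfrak m}(R)=0$ and $\dim H^2_{\mathfrak m}(R)\le 0$, so every $H^i_{\mathfrak m}(R)$ with $i<3$ has finite length and $R$ is Buchsbaum; moreover the multidegree-$0$ piece of Hochster's formula gives $0=H^1_{\mathfrak m}(R)_0\cong\widetilde H^0(\Delta;k)$, whence $\Delta$ is connected, i.e. $R$ is connected. Conversely, if $R$ is connected and Buchsbaum then $H^1_{\mathfrak m}(R)$ has finite length; since a finite-length $\mathbb{Z}^n$-graded local cohomology module of $k[\Delta]$ is forced by Hochster's formula to be concentrated in multidegree $0$, we get $H^1_{\mathfrak m}(R)=H^1_{\mathfrak m}(R)_0\cong\widetilde H^0(\Delta;k)=0$ because $\Delta$ is connected; together with $H^0_{\mathfrak m}(R)=0$ and $\dim H^2_{\mathfrak m}(R)\le 0$ this is exactly the criterion of Lemma \ref{gluelemma} for $(S_2)$, and we are done.

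The step I expect to be the main obstacle is assembling this dictionary carefully: one must cite (or reprove) that for Stanley--Reisner rings the Buchsbaum property genuinely coincides with finite length of $H^i_{\mathfrak m}$ for $i<d$---this is the substantive content of Schenzel's theorem, and it fails for general rings---and one must justify that a finite-length $\mathbb{Z}^n$-graded local cohomology module of $k[\Delta]$ lives only in multidegree $0$, where it computes $\widetilde H^{\bullet}(\Delta;k)$. If one prefers to avoid the cohomological form of Schenzel's criterion, there is a self-contained combinatorial route through Theorem \ref{locally}: in dimension $3$ the only faces whose link can fail to be Cohen--Macaulay are the vertices, since an edge link is pure $0$-dimensional (hence Cohen--Macaulay) and a facet link is $\{\emptyset\}$; a vertex link $\operatorname{lk}_\Delta v$ is pure $1$-dimensional, hence Cohen--Macaulay exactly when it is connected, exactly when the corresponding induced subgraph $\mathcal{G}(R_P)$ of $\bar{\mathcal{G}}(R)$ is connected; and the remaining condition, connectedness of $\mathcal{G}(R)$ itself, follows from connectedness of $\Delta$ together with connectedness of all vertex links by a star-by-star path argument. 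Matching this list against Theorem \ref{locally} again yields the equivalence.
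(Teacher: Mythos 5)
Your proposal is correct, and its skeleton coincides with the paper's: both rely on Lemma \ref{gluelemma} to translate $(S_2)$ into vanishing and finiteness conditions on $H^i_{\mathfrak m}(R)$, and both invoke the Schenzel/Reisner characterization of Buchsbaumness for Stanley--Reisner rings. The difference is in emphasis on the $(S_2)\Rightarrow$ Buchsbaum direction: the paper switches to the combinatorial link criterion, observing that every $R_P$ with $\dim R_P < 3$ is Cohen--Macaulay when $R$ is $(S_2)$, and hence every link of a nonempty face has vanishing reduced homology below the top degree. You instead stay on the cohomological side throughout, characterizing Buchsbaumness by finite length of $H^i_{\mathfrak m}(R)$ for $i<d$, and using Hochster's formula plus the scaling invariance of the $\mathbb{Z}^n$-grading to show that a finite-length local cohomology module of $k[\Delta]$ is concentrated in multidegree $0$. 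That last observation is a genuine improvement in rigor: the paper's sentence ``Connectivity implies $H_{\mathfrak m}^0(R) = H_{\mathfrak m}^1(R) = 0$'' is a bit quick, since connectivity of $\Delta$ a priori only controls the degree-$0$ piece $\widetilde H^0(\Delta;k)$ of $H^1_{\mathfrak m}(R)$; your treatment makes explicit that Buchsbaumness (or purity of $\Delta$) is what forces $H^1_{\mathfrak m}(R)$ into degree $0$ in the first place. Your alternate ``combinatorial route'' through Theorem \ref{locally} is closer in spirit to what the paper actually does in the $(S_2)\Rightarrow$ Buchsbaum direction. One small bookkeeping point: when you state Schenzel's criterion as ``Buchsbaum iff $\widetilde H^j(\operatorname{lk}_\Delta\sigma;k)=0$ for $\sigma\ne\emptyset$, $j<\dim\operatorname{lk}_\Delta\sigma$,'' purity of $\Delta$ must be carried along as a hypothesis, both to make the criterion correct and to run the Hochster-formula translation (the formula for $\dim\operatorname{lk}_\Delta\sigma$ uses purity); you flag purity at the outset, so this is fine, but it should appear explicitly in the statement of the criterion.
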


\begin{proof}
Let $\Delta$ be a complex with Stanley-Reisner ring $R = S/I$.  Let $m$ be the unique homogeneous maximal ideal of $S$.  Using local duality (see Lemma \ref{gluelemma}), we have that a $3$-dimensional equidimensional Stanley-Reisner ring $R$ satisfies $(S_2)$ if and only if \[ \dim \Ext _S ^{n-i} (R,\omega _S) \leq i-2 \quad \textrm{ for all } i < 3. \]  Thus, $R$ satisfies $(S_2)$ if and only if $H_m^0(R) = H_m^1(R)=0$ and $H_m^2(R)$ is finitely generated.  

Suppose $R$ is Buchsbaum and connected.  Connectivity implies $H_m^0(R) = H_m^1(R)=0$.  If $R$ is Buchsbaum, then $R$ is Generalized Cohen-Macaulay, which implies that $H_m^i(R)$ is finitely generated for all $i < d$.  Thus for $R$ an equidimensional Stanley-Reisner ring of dimension $3$, Buchsbaum and connected imply $(S_2)$. 

Suppose now that $R$ satisfies $(S_2)$.  We will use the combinatorial definition of Buchsbaum, which says a complex is Buchsbaum if it is pure and has the property that the link of any non-empty face has zero reduced homology except possibly in top dimension.  

We first note the Stanley-Reisner ring of $\Delta$ satisfying $(S_2)$ implies $\Delta$ is pure and connected.  Next we note that every link of a non-empty face of $\Delta$ has Stanley-Reisner ring $R_P$, where $P$ is a prime ideal and $\dim R_P < \dim R$.  Thus $R$ being a $3$-dimensional, $(S_2)$ ring implies $R_P$ is Cohen-Macaulay for all $P$ such that $\dim R_P < \dim R$.  Thus $R_P$ has zero reduced homology except possibly in top dimension.  Thus $R$ is Buchsbaum and connected.
\end{proof}

Note that this theorem does not apply in the higher dimension cases.  In fact, most of our examples in higher dimension, including Figure \ref{dim4} below, are not Buchsbaum.

\begin{prop}
$\mu (4,8) = 6$.
\end{prop}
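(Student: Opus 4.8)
The plan is to prove the two inequalities $\mu(4,8)\le 6$ and $\mu(4,8)\ge 6$ separately. The upper bound is already in hand: Corollary \ref{C3} gives $\mu(4,8)\le 6$, so nothing further is needed there. For the lower bound I will exhibit an $(S_2)$ Stanley--Reisner ring $R$ of dimension $4$ on $n=8$ variables with $\diam \bar{\mathcal{G}}(R)\ge 6$. By Theorem \ref{locally} and the corollary following it, this is equivalent --- and more convenient --- to producing a graph on $4$-element subsets of $\{1,\dots,8\}$ with properties (i) and (ii) together with two vertices at distance $6$.

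First I display such a graph in Figure \ref{dim4}, including among its vertices the two ``antipodal'' labels $ABCD$ and $EFGH$. Property (ii) is automatic once one checks that the drawn edges are exactly the pairs of vertices whose labels meet in a $3$-element set. For property (i) --- equivalently, by Theorem \ref{locally}, that $R$ is $(S_2)$ --- I verify local connectedness: for every $S\subseteq\{1,\dots,8\}$, the subgraph induced on the vertices whose label contains $S$ is connected. It suffices to check $|S|=1$ and $|S|=2$: the case $|S|=0$ is the connectedness of the whole graph; for $|S|=3$ any two vertices containing $S$ differ in a single variable, hence meet in the $3$-set $S$ and are adjacent by property (ii), so they form a clique; and for $|S|=4$ there is a unique such vertex. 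This reduces to a finite, if somewhat lengthy, inspection of the figure, of exactly the kind illustrated by the colourings in Figure \ref{a3}. Alternatively, one may confirm $(S_2)$ by computing the first syzygy matrix of the Alexander dual $I^{\vee}$ and checking that it has only linear entries, using Theorem \ref{Yana}.

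It remains to see that $\dist(ABCD,EFGH)=6$. The bound ``$\le 6$'' is already supplied by Corollary \ref{C3}, so only ``$\ge 6$'' needs argument, and this follows by tracing the layer structure of Figure \ref{dim4} exactly as in the proof of Theorem \ref{dimthree} (or simply by a breadth-first search from $ABCD$): every path from $ABCD$ to $EFGH$ passes through at least five intermediate vertices. Combining $\mu(4,8)\ge 6$ with Corollary \ref{C3} yields $\mu(4,8)=6$.

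The only real obstacle is bookkeeping: one must first exhibit a concrete dimension-$4$ complex on $8$ vertices whose facet--ridge graph attains the diameter $6$ permitted by the $(S_2)$ upper bound, and then certify local connectedness across all the relevant variable subsets. Once the figure in Figure \ref{dim4} is fixed, every remaining step is a finite verification.
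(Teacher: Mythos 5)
Your proof is correct and takes essentially the same route as the paper: the paper also cites Corollary~\ref{C3} for $\mu(4,8)\le 6$ and exhibits Figure~\ref{dim4} as the diameter-$6$ example. You simply spell out the finite verification of local connectedness and the distance computation that the paper leaves implicit.
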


\begin{proof}
Figure \ref{dim4} is an example of a diameter-$6$ graph with propeties (i) and (ii) with $d=4$, $n=8$.  In Corollary \ref{C3}, we will prove $\mu (4,8) \leq 6$.
\end{proof}

\begin{figure}[h]
\begin {center}
\resizebox {9cm} {!} {
\begin {tikzpicture}[-latex ,auto ,node distance =2.07 cm and 2.3cm ,on grid ,
semithick ,
state/.style ={ circle ,top color =white , bottom color = processblue!20 ,
draw,processblue , text=blue , inner sep=0pt, minimum width =.3 cm}]
\node[state] (C)
{$ABEG$};
\node[state] (A) [left=of C] {$BDEG$};
\node[state] (B) [right =of C] {$ACEG$};
\node[state] (D) [above =of C] {$ACEF$};
\node[state] (E) [below =of C] {$BDGH$};
\node[state] (F) [below =of E] {$CDFH$};
\node[state] (G) [right =of F] {$BDFH$};
\node[state] (H) [above =of G] {$CDGH$};
\node[state] (I) [left =of F] {$ACFH$};
\node[state] (K) [above =of A] {$CDEF$};
\node[state] (L) [below left =of K] {$BCDE$};
\node[state] (J) [below left =of L] {$ABCD$};
\node[state] (R) [left =of E] {$ABGH$};
\node[state] (M) [below right =of J] {$ABCH$};
\node[state] (N) [above =of B] {$ABEF$};
\node[state] (O) [above right =of B] {$BEFH$};
\node[state] (P) [right =of H] {$CEGH$};
\node[state] (Q) [below right =of O] {$EFGH$};
\path[-] (J) edge [bend left =0] node[below =0.15 cm] {$$} (M);
\path[-] (J) edge [bend left =0] node[below =0.15 cm] {$$} (L);
\path[-] (M) edge [bend left =0] node[below =0.15 cm] {$$} (I);
\path[-] (M) edge [bend left =0] node[below =0.15 cm] {$$} (R);
\path[-] (L) edge [bend left =0] node[below =0.15 cm] {$$} (A);
\path[-] (L) edge [bend left =0] node[below =0.15 cm] {$$} (K);
\path[-] (K) edge [bend left =0] node[below =0.15 cm] {$$} (D);
\path[-] (A) edge [bend left =0] node[below =0.15 cm] {$$} (C);
\path[-] (R) edge [bend left =0] node[below =0.15 cm] {$$} (E);
\path[-] (I) edge [bend left =0] node[below =0.15 cm] {$$} (F);
\path[-] (D) edge [bend left =0] node[below =0.15 cm] {$$} (N);
\path[-] (C) edge [bend left =0] node[below =0.15 cm] {$$} (B);
\path[-] (E) edge [bend left =0] node[below =0.15 cm] {$$} (H);
\path[-] (F) edge [bend left =0] node[below =0.15 cm] {$$} (G);
\path[-] (N) edge [bend left =0] node[below =0.15 cm] {$$} (O);
\path[-] (B) edge [bend left =0] node[below =0.15 cm] {$$} (P);
\path[-] (H) edge [bend left =0] node[below =0.15 cm] {$$} (P);
\path[-] (G) edge [bend left =0] node[below =0.15 cm] {$$} (O);
\path[-] (P) edge [bend left =0] node[below =0.15 cm] {$$} (Q);
\path[-] (O) edge [bend left =0] node[below =0.15 cm] {$$} (Q);
\path[-] (A) edge [bend left =0] node[below =0.15 cm] {$$} (E);
\path[-] (R) edge [bend left =0] node[below =0.15 cm] {$$} (C);
\path[-] (K) edge [bend left =0] node[below =0.15 cm] {$$} (F);
\path[-] (I) edge [bend left =0] node[below =0.15 cm] {$$} (D);
\path[-] (D) edge [bend left =0] node[below =0.15 cm] {$$} (B);
\path[-] (C) edge [bend left =0] node[below =0.15 cm] {$$} (N);
\path[-] (E) edge [bend left =0] node[below =0.15 cm] {$$} (G);
\path[-] (F) edge [bend left =0] node[below =0.15 cm] {$$} (H);
\end{tikzpicture}
}
\caption{}
\label{dim4}
\end{center}
\end{figure}

\begin{prop}
$\mu (4,9) = 7$.
\end{prop}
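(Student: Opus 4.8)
The plan is to prove $\mu(4,9)\ge 7$ and $\mu(4,9)\le 7$ separately.

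For the lower bound, I would start from the diameter-$6$ graph of Figure \ref{dim4}, whose complex $\Delta_0$ realizes $\mu(4,8)=6$ with $ABCD$ and $EFGH$ at maximal distance. Introduce a ninth variable $I$ and set $\Delta=\Delta_0\cup\langle FGHI\rangle$, i.e.\ adjoin the single facet $FGHI$. Since $EFGH$ is the only facet of $\Delta_0$ containing all of $F,G,H$, the new vertex $FGHI$ is adjacent in $\bar{\mathcal G}(R_\Delta)$ to $EFGH$ and to nothing else; consequently every path to $FGHI$ runs through $EFGH$, so $\dist(ABCD,FGHI)=\dist(ABCD,EFGH)+1=7$, all distances among old vertices are unchanged, and the diameter of the enlarged graph is exactly $7$. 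The ring stays $(S_2)$: $\Delta$ is obtained by gluing the $(S_2)$ complex $\Delta_0$ to the $3$-simplex on $\{F,G,H,I\}$ --- which is Cohen--Macaulay, hence $(S_2)$ --- along their intersection, the $2$-simplex on $\{F,G,H\}$, which is pure of dimension $d-2=2$ with Cohen--Macaulay Stanley--Reisner ring, so Theorem \ref{gluing} with $\ell=2$ applies. Hence $\bar{\mathcal G}(R_\Delta)$ has properties (i) and (ii), and $\mu(4,9)\ge 7$.

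For the upper bound, let $R$ be a $4$-dimensional $(S_2)$ Stanley--Reisner ring in $9$ variables and let $v_1,v_2$ be a pair realizing $\diam\bar{\mathcal G}(R)$. If $v_1\cap v_2\ne\emptyset$, pick a common variable $x$; by Theorem \ref{locally} there is a $v_1$--$v_2$ path all of whose vertices contain $x$, and such a path lies inside the facet-ridge graph of the link of $x$ in $\Delta_R$, a $2$-dimensional $(S_2)$ complex on at most $8$ vertices, so $\dist(v_1,v_2)\le\mu(3,8)=6$. Thus we may assume $v_1\cap v_2=\emptyset$, so $v_1\cup v_2$ misses exactly one variable $z$. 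Fix a geodesic $v_1=w_0,w_1,\dots,w_L=v_2$ in $\bar{\mathcal G}(R)$. The five $4$-element subsets of the $5$-set $v_1\cup\{z\}$ pairwise meet in $3$ elements, so those among them that are facets of $\Delta_R$ span a clique; a geodesic meets any clique in at most two vertices and $w_0$ lies in this one, whence the least $j\ge 1$ with $w_j\cap v_2\ne\emptyset$ satisfies $j\le 2$, and the link reduction applied at $w_j$ gives $\dist(w_j,v_2)\le\mu(3,8)=6$. This already yields $\mu(4,9)\le 8$.

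The main obstacle is improving $L\le 8$ to $L\le 7$, i.e.\ excluding a geodesic of length $8$ in the disjoint case. Such a geodesic forces $j=2$, with $w_1=(v_1\setminus\{a\})\cup\{z\}$ and $w_2=(v_1\setminus\{a,c\})\cup\{z,y\}$ for some $a,c\in v_1$ and $y\in v_2$ (all other branches being eliminated by the clique/shortcut argument), and then the link of $y$ must be on all $8$ remaining variables and realize $\mu(3,8)=6$ with $w_2\setminus\{y\}$ and $v_2\setminus\{y\}$ as an extremal pair --- essentially a copy of the configuration of Figure \ref{a4} --- while still being embeddable in an $(S_2)$ complex on $9$ variables that reaches $v_1$ in two further steps. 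I expect eliminating this possibility to be the hard part: it is a finite check, which I would organize by tracking the statistics $|w\cap v_1|$, $|w\cap v_2|$ and whether $z\in w$ along the geodesic (each changing by at most $1$ at a step and summing to $4$) and repeatedly invoking the clique/shortcut argument together with the bounds $\mu(3,8)=6$ and $\mu(3,7)=\mu(2,7)=5$ to prune cases. As with the companion statement $\mu(3,9)=7$ (Proposition \ref{AP6}), the complete verification looks lengthy and is probably best confirmed with computer assistance.
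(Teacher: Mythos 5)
Your lower bound construction is correct and essentially matches the paper (the paper appends $EFGI$ rather than your $FGHI$ to Figure \ref{dim4}, but in both cases a single new facet adjacent only to $EFGH$ is glued on, and Theorem \ref{gluing} applies as you describe).

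For the upper bound you establish only $\mu(4,9)\leq 8$ and then explicitly defer the exclusion of a length-$8$ geodesic to ``a finite check\dots probably best confirmed with computer assistance.'' That is a genuine gap: the paper closes it with a short, fully human-readable argument that your plan misses, and no case enumeration is needed. Suppose $\dist(ABCD, FGHI) \geq 8$ with $ABCD \cap FGHI = \emptyset$ and $E$ the unique ninth variable. First, no neighbor $v$ of $ABCD$ can share a variable $x$ with $FGHI$: local connectedness would give a path from $v$ to $FGHI$ through vertices containing $x$, which lives in the facet-ridge graph of an $(S_2)$ complex of dimension $3$ on at most $8$ vertices, so $\dist(v,FGHI) \leq \mu(3,8)=6$ and $\dist(ABCD,FGHI)\leq 7$, a contradiction. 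Since only $9$ variables exist, every neighbor of $ABCD$ must therefore contain $E$, and symmetrically so must every neighbor of $FGHI$. Now consider the nonempty connected subgraph of vertices containing $E$. Each such $w$ carries three further variables drawn from $ABCD \cup FGHI$, hence at least two from one side; say $A,B\in w$. Local connectedness for the pair $(ABCD,w)$ yields a path of vertices containing $AB$ from $ABCD$ to $w$; its first step is a neighbor of $ABCD$ containing $AB$ and (being a neighbor of $ABCD$) also $E$, hence is $ABCE$ or $ABDE$, and that vertex shares $ABE$ with $w$, so it is adjacent to or equal to $w$. Thus $\dist(ABCD,w)\leq 2$, and by symmetry every vertex of the $E$-link is within distance $2$ of $ABCD$ or of $FGHI$. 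Connectivity of the $E$-link forces an edge, or a common vertex, between the two parts, so $\dist(ABCD,FGHI)\leq 2+1+2=5$, contradicting the assumption $\geq 8$. This is the argument your proposal needs to complete $\mu(4,9)\leq 7$.
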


\begin{proof}
Let $\mathcal{G}$ be a graph with properties (i) and (ii) and diameter at least $8$.  First let us consider two vertices with maximum distance in $\mathcal{G}$.  If the intersection of these vertices is non-trivial, their distance is bounded above by $\mu (3,8) = 6$.  Thus, the vertices of maximal distance in $\mathcal{G}$ must have trivial intersection.  Call them $ABCD$ and $FGHI$.  
Suppose there exists a $v \in V(\mathcal{G})$ such that $v$ is adjacent to $ABCD$, and $v$ contains $F,G,H$ or $I$.  The shortest path from $v$ to $FGHI$ will be bounded above by $\mu (3,8) =6$, and thus $\mathcal{G}$ will have diameter at most $7$.  Thus no such vertex is contained in $\mathcal{G}$.  Simillarly no vertex containing $A,B,C$ or $D$ adjacent to $FGHI$ is contained in $\mathcal{G}$.

Connectivity of $\mathcal{G}$ requires $\mathcal{G}$ have at least one vertex adjacent to $ABCD$ and at least one vertex adjacent to $FGHI$. These vertices must both contain the only variable which is not in $ABCD$ or $FGHI$, call this variable $E$.  Since $\mathcal{G}$ is locally connected, $\mathcal{G}$ must contain a connected subgraph composed only of the vertices containing $E$.  Every vertex containing $E$ will also contain two variables from either $ABCD$ or $FGHI$.  Take such a vertex, $ABEF$.  Then $\mathcal{G}$ must have a connected subgraph made up of only the vertices containing $AB$.  However, we already have that any vertex adjacent to $ABCD$ must contain $E$.  Thus $ABEF$ must have shortest path length $2$ to $ABCD$.  Any vertex containing $E$ is distance at most $2$ from $ABCD$ or $FGHI$.  Thus $\mathcal{G}$ is at most diameter $5$.  Thus we have a contradiction.  Thus no diameter-$8$ graph with properties (i) and (ii) exists.

To construct a diameter-$7$ graph with properties (i) and (ii), take the graph in Figure \ref{dim4} and append the vertex $EFGI$.
\end{proof}

\begin{prop}
$\mu (4,n) \geq n-d+2$ for all $n \geq 8$.
\end{prop}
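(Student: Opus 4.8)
The plan is to imitate, one dimension up, the construction used for the $d=3$ analogue: start from a $d=4$ graph whose diameter equals $n-d+2$ at the smallest admissible values of $n$, and then hang off it a pendant path of vertices, each of which carries exactly one brand-new variable and therefore contributes one new variable and one unit of diameter. For $n=8$ and $n=9$ nothing needs to be done, since $\mu(4,8)=6=8-d+2$ (witnessed by Figure~\ref{dim4}) and $\mu(4,9)=7=9-d+2$ (witnessed by Figure~\ref{dim4} with the vertex $EFGI$ adjoined to $EFGH$). For $n=9+j$ with $j\ge 1$, I take that diameter-$7$ graph on the variables $A,\dots,H,I$ and adjoin the vertices
\[
FGIx_1,\quad GIx_1x_2,\quad Ix_1x_2x_3,\quad x_1x_2x_3x_4,\quad x_2x_3x_4x_5,\ \dots,\ x_{j-3}x_{j-2}x_{j-1}x_j,
\]
that is, writing $w_0=EFGI$, each $w_k$ ($1\le k\le j$) is obtained from $w_{k-1}$ by deleting its oldest variable and adjoining the fresh variable $x_k$. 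This uses exactly the $9+j=n$ variables; call the resulting graph $\mathcal G$.

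First I would pin down the combinatorial structure. Because each $w_k$ contains the fresh variable $x_k$, consecutive vertices $w_{k-1},w_k$ share precisely three variables, while $w_k$ meets every other previously present vertex (including all of Figure~\ref{dim4}) in at most two variables. Hence $w_0,w_1,\dots,w_j$ form a simple path whose only attachment to the rest of $\mathcal G$ is the edge $EFGH$–$w_0$; in other words $\mathcal G$ is the diameter-$6$ graph of Figure~\ref{dim4} with a pendant path of length $j+1$ hung from the vertex $EFGH$. Appending a pendant path creates no shortcuts, so $\diam\mathcal G=6+(j+1)=7+j=n-d+2$, the longest shortest path running from $ABCD$ to $w_j$, which gives the desired lower bound. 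Property (ii) holds by definition of the relabelled dual graph. For property (i), I would argue by induction on $j$ via Theorem~\ref{gluing}: passing from the complex carrying $w_0,\dots,w_{k-1}$ to the one carrying $w_0,\dots,w_k$ is the gluing of the $(d-1)$-simplex $w_k$ along the $(d-2)$-simplex $w_k\cap w_{k-1}$, whose Stanley--Reisner ring is a polynomial ring, hence $(S_1)$ (indeed $(S_\ell)$ for every $\ell$); by Theorem~\ref{gluing} the $(S_2)$ property is preserved, so $\mathcal G$ is $\bar{\mathcal G}(R)$ for an $(S_2)$ Stanley--Reisner ring and, by the corollary to Theorem~\ref{locally}, has properties (i) and (ii). Alternatively one checks local connectedness by hand: for a set $T$ of variables, the vertices of $\mathcal G$ containing $T$ either lie entirely in the already-verified $n=9$ graph, or form a consecutive stretch of the pendant path together with the finitely many original vertices containing the remaining non-fresh variables of $T$, and in either case the induced subgraph is connected.

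The step needing the most care is the intersection bookkeeping that shows no unexpected edge appears when a new $w_k$ is adjoined, i.e.\ that $w_k$ meets every previously present vertex in at most two variables (with the sole exception of the intended neighbour $w_{k-1}$). This is exactly what makes the appended part a genuine pendant path and forces $\diam\mathcal G$ to be $n-d+2$ rather than something smaller; fortunately it follows immediately from the fact that each $w_k$ contains its own fresh variable $x_k$, which occurs in no earlier vertex and in only the next three vertices of the chain. Once this is in hand, both the diameter computation and the verification that local connectedness is preserved are routine.
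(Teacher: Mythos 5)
Your proposal is correct and uses essentially the same construction as the paper: the paper's one-line proof also starts from the base graph of Figure~\ref{dim4} and appends a chain $EFGx_1, FGx_1x_2, \dots, x_{j-3}x_{j-2}x_{j-1}x_j$ in which each new vertex introduces exactly one fresh variable. Your version merely shifts the base case to $n=9$ and spells out the verification the paper leaves implicit (the pendant-path intersection bookkeeping and the appeal to Theorem~\ref{gluing} to preserve $(S_2)$), both of which are sound.
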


\begin{proof}
We can construct an example of a diameter-$(n-d+2)$  graph with propeties (i) and (ii) with $d=4$, $n=8+j$ by taking the graph in Figure \ref{dim4} and appending the vertices \[  EFGx_1 , FGx_1x_2 , \cdots x_{j-3}x_{j-2}x_{j-1}x_j . \]
\end{proof}

\section*{Acknowledgment}
I would like to thank my adviser Hailong Dao for insight and mentoring.  I would like to thank Vic Reiner, Sandra Spiroff, Chris Francisco, Ken Duna, and Bennet Goeckner for insightful conversations.  I would also like to thank Jeremy Martin, Bruno Benedetti, Matteo Varbaro, Margaret Bayer, and my referee for helpful comments on this paper. 

\bibliographystyle{amsalpha}
\bibliography{mybib}
\end{document}